\newtheorem{theorem}{Theorem}[section]
\newtheorem{lemma}[theorem]{Lemma}
\newtheorem{corollary}[theorem]{Corollary}
\newtheorem{proposition}[theorem]{Proposition}
\newtheorem{claim}[theorem]{Claim}
\theoremstyle{definition}
\newtheorem{definition}[theorem]{Definition}
\newtheorem{remark}[theorem]{Remark}
\newtheorem{notation}[theorem]{Notation}
\newtheorem*{ack}{Acknowledgements}
\numberwithin{equation}{section}
\title{Tilting objects in singularity categories of\\ certain toric Gorenstein singularities
\footnotetext{Email: xjchen@scu.edu.cn, liuleilei@zust.edu.cn, zengjh662@163.com}}
\author[a]{Xiaojun Chen}
\author[b]{Leilei Liu}
\author[c]{Jieheng Zeng}
\affil[a]{School of Mathematics, Sichuan University, Chengdu, P.R. China}
\affil[b]{School of Science, Zhejiang University of Science and Technology, Hangzhou,
P.R. China}
\affil[c]{School of Mathematics, Peking University, Beijing, P.R. China}
\date{}
\begin{document}
\maketitle

\begin{abstract}
We study a class of Gorenstein isolated singularities
which are the quotients of generic and unimodular representations of
the one-dimensional torus, or of the product of the one-dimensional torus with
a finite abelian group. Based
on the works of
\v{S}penko and Van den Bergh [Invent. Math. 210 (2017), no. 1, 3-67] and Mori and Ueyama
[Adv. Math. 297 (2016), 54-92], we
show that the singularity categories of these varieties admit tilting objects, and hence are triangle
equivalent to the perfect categories of some finite dimensional algebras.

\noindent{\bf Keywords:} singularity category, tilting object, non-commutative crepant resolution

\noindent{\bf MSC2020:} 14A22, 16G50, 32S20
\end{abstract}

\setcounter{tocdepth}{2}
\tableofcontents

\section{Introduction}\label{sect:Intro}

For a Noetherian graded Gorenstein algebra $S$ with singularities,
its singularity category $D^{gr}_{sg}(S): = D^{b}(\mathrm{grmod}\, S)/\mathrm{Perf}(S)$
is a triangulated category, which reflects
many properties of the singularities of $S$.
For example,
Buchweitz shows in \cite{RB}
that $D^{gr}_{sg}(S)$ is triangle equivalent to the stable category
$\mathrm{\underline{CM}}^{\mathbb{Z}}(S)$,
which is the quotient category of the category of graded maximal Cohen-Macaulay modules over $S$ by
the full subcategory of graded projective modules.

In a series of papers \cite{DR, DR1, DR2},
Orlov shows that the graded singularity categories
are determined by the local properties of the singularities,
and
have a deep relationship with the Homological Mirror Symmetry conjecture, where
the category of graded D-branes of type B with a homogeneous superpotential is
equivalent to the singularity category $D^{gr}_{sg}(S)$ for some graded commutative algebra $S$.
If $S$ is a Gorenstein isolated singularity and we forget the gradings of the $S$-modules,
then $D_{sg}(S)\cong\mathrm{\underline{CM}}(S)$ is a Calabi-Yau triangulated category,
which has fruitful homological properties and applications,
and has been extensively
studied in recent years.

The purpose of this paper is to construct
a tilting object in the singularity category of some isolated Gorenstein
singularities.
For an algebraic triangulated category,
the existence of a tilting object is very important,
since it replaces the abstract triangulated category
by the concrete perfect category of modules over some algebra; see, for example,
\cite{BIY,HHK} and references therein.

Recall that every affine toric Gorenstein variety is in the form
$\mathrm{Spec}\big(\mathrm{Sym}(W)^{G}\big)$, where $W$ is a generic unimodular
representation of an abelian reductive group $G$, which is
the product of a torus and a finite abelian group (c.f. \cite[\S10.6.1]{SV1}).
When $G$ is a finite group,
Iyama and Takahashi show in \cite{IT} that the corresponding singularity category
has a tilting object. Later in \cite{MU}, Mori and Ueyama generalize the above result
to Noetherian Koszul Artin-Schelter (AS-) regular algebras.

However, for the case that $G$ contains a torus,
the singularity categories are considered very rarely in the literature.
The difficulty lies in two folds.
On the one hand, there may be infinitely many non-isomorphic
irreducible maximal Cohen-Macaulay modules
over $\mathrm{Sym}(W)^{G}$,
and we therefore cannot apply the McKay-type correspondence,
such as in \cite{IT}, to construct the corresponding quiver and hence the tilting object.
On the other hand, the non-commutative crepant resolution of the quotient singularity
is not a Koszul algebra in general,
and thus the method of \cite{MU} cannot be used to this case directly, either.
Nevertheless, we prove in this paper the following.

\begin{theorem}\label{MainTh}
Let $k$
be an algebraically closed field of characteristic zero.
Let $G$ be
\begin{enumerate}\item[$(1)$]
the one-dimensional torus $k^\times$, or
\item[$(2)$]
the product of $k^\times$ with a nontrivial finite abelian group.
\end{enumerate}
Suppose $V = k^{n}$ is a generic and unimodular representation of $G$, and
$S:=k[x_{1}, \cdots, x_{n}]^G$ is an isolated affine toric Gorenstein singularity.
Then the singularity category $D^{gr}_{sg}(S)$ admits a tilting object, where the grading of $S$
is canonically induced from that of $k[x_{1}, \cdots, x_{n}]$.
\end{theorem}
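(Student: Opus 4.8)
\emph{Strategy.} The plan is to transport the computation to the non-commutative projective scheme $\mathrm{qgr}\,S$, which here is the category of coherent sheaves on a smooth proper toric Deligne--Mumford stack $\mathcal X=\mathrm{Proj}\,S$; to use the non-commutative crepant resolution of \v{S}penko--Van den Bergh to build a tilting bundle on $\mathcal X$; and then to cut this bundle down to a tilting object of $D^{gr}_{sg}(S)$ by means of Orlov's semiorthogonal decomposition, in the spirit of Iyama--Takahashi \cite{IT} and Mori--Ueyama \cite{MU} but without their Koszulness hypothesis. Concretely, write $R=k[x_1,\dots,x_n]$ with its standard grading and $S=R^G$; since $k^n$ is unimodular, $G$ acts through $\mathrm{SL}_n$, hence $\omega_S\cong(\omega_R)^G\cong S(-n)$, so $S$ is $\mathbb Z$-graded AS-Gorenstein with Gorenstein parameter $n>0$. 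As $\mathrm{Spec}\,S$ has an isolated singularity at the cone point, the grading $\mathbb G_m$ acts on $\mathrm{Spec}\,S\smallsetminus\{0\}$ with finite stabilisers, so $\mathcal X$ is a smooth proper toric DM stack with $\mathrm{qgr}\,S\simeq\mathrm{coh}\,\mathcal X$ and $\omega_{\mathcal X}\cong\mathcal O_{\mathcal X}(-n)$ (it is ``Fano'').

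\emph{From the NCCR to a tilting bundle.} By \cite{SV1}, $S$ admits a non-commutative crepant resolution $\Lambda=\mathrm{End}_S(M)$ with $M=\bigoplus_{\chi\in\mathcal L}M_\chi$, $M_\chi=(R\otimes_k\chi)^G$ a module of covariants and $\mathcal L\subseteq\widehat G$ a finite ``window''; the isolated-singularity hypothesis guarantees that $M$ is a graded maximal Cohen--Macaulay $S$-module and that $\mathrm{gl.dim}\,\Lambda=\dim S$. Since $G$ contains the one-dimensional torus $T$ and $\Lambda$ is built from the graded ring $R$, each $M_\chi$ is $\mathbb Z$-graded and its sheafification $\mathcal L_\chi:=\widetilde{M_\chi}$ is a line bundle on $\mathcal X$. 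Using that $T$ is one-dimensional — so that $\mathcal L$ can be taken to be an interval of characters and the ``magic window'' technology of variation of GIT applies — one checks that a suitable $\mathcal T=\bigoplus_\chi\mathcal L_\chi$ is a tilting bundle on $\mathcal X$ with finite-dimensional endomorphism algebra; the factor $F$ in case $(2)$ only adds a finite grading and is carried along $F$-equivariantly.

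\emph{Orlov's decomposition and extraction of the tilting object.} Because the Gorenstein parameter equals $n>0$, Orlov's theorem (see \cite{DR,DR1,DR2}) gives a fully faithful exact functor $D^{gr}_{sg}(S)\hookrightarrow D^b(\mathrm{qgr}\,S)$ identifying $D^{gr}_{sg}(S)$ with the orthogonal complement $\mathcal B^{\perp}$ of the subcategory $\mathcal B$ generated by the images of $S,S(1),\dots,S(n-1)$; combined with Buchweitz's equivalence $D^{gr}_{sg}(S)\simeq\underline{\mathrm{CM}}^{\mathbb Z}(S)$ \cite{RB}, it suffices to produce a tilting object of $\mathcal B^{\perp}$. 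One arranges the window so that $\mathcal T=\mathcal T_{\mathcal B}\oplus\mathcal T'$, with $\mathcal T_{\mathcal B}$ a tilting bundle generating $\mathcal B$ and $\mathcal T'$ the remaining ``tail''. The mutation $\mathbb L_{\mathcal B}\mathcal T'$ of $\mathcal T'$ into $\mathcal B^{\perp}$ is then a tilting object: it generates $\mathcal B^{\perp}$ because $\mathcal T$ generates $D^b(\mathcal X)$, and the vanishing $\mathrm{Hom}_{\mathcal B^{\perp}}(\mathbb L_{\mathcal B}\mathcal T',\mathbb L_{\mathcal B}\mathcal T'[i])=0$ for $i\neq0$ follows, via the adjunction triangle for $\mathbb L_{\mathcal B}$, from the tilting property of $\mathcal T$ together with an explicit analysis of the resolution of the diagonal sheaf on $\mathcal X\times\mathcal X$. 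Translating back, the tilting object of $D^{gr}_{sg}(S)$ is assembled from suitable Serre twists of the covariant modules $M_\chi$, and its endomorphism algebra is a finite-dimensional algebra $\Gamma$, whence $D^{gr}_{sg}(S)\simeq\mathrm{per}(\Gamma)$.

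\emph{Main obstacle.} The delicate point is the last step, together with the choice of the adapted window: because $\Lambda$ is not Koszul, the resolution of the diagonal on $\mathcal X\times\mathcal X$ is longer and less rigid than the Beilinson--Koszul one used in \cite{IT,MU}, so those results cannot be invoked directly; one must instead track, by hand, which twisted covariant modules survive in $\mathcal B^{\perp}$ and verify the required Ext-vanishing for the candidate tilting object. This analysis depends on the combinatorics of the window $\mathcal L$, which becomes fully explicit — lengths of the $M_\chi$, shape of the resolution of the diagonal — precisely when $T$ is one-dimensional and $\mathcal L$ is an interval; this is the reason for restricting to cases $(1)$ and $(2)$ rather than treating all toric Gorenstein quotient singularities.
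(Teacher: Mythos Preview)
Your high-level architecture matches the paper's: build the \v{S}penko--Van den Bergh NCCR $\Lambda=\mathrm{End}_{S}(M)$, pass to $D^{b}(\mathrm{tail}\,S)$, invoke Orlov's semiorthogonal decomposition, and then extract a tilting object of the orthogonal complement $D^{gr}_{sg}(S)$. However, there is a concrete error and a genuine gap.

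The error is the claim that $\mathcal T=\bigoplus_{\chi\in\mathcal L}\mathcal L_\chi$ is a tilting bundle on $\mathcal X$ with finite-dimensional endomorphism algebra. Its endomorphism ring is exactly $\Lambda$, which is infinite-dimensional; $\widetilde M$ is a generator but not a tilting object in the sense you need. The paper obtains a genuine tilting object of $D^{b}(\mathrm{tail}\,S)$ differently: it first proves (Propositions~\ref{ToGo} and~\ref{ToAS}) that $\Lambda$ itself is AS-regular of dimension $n-1$ with Gorenstein parameter $n$ --- this is not automatic and uses the Calabi--Yau property of the NCCR together with a computation of the parameter --- then applies Orlov's theorem to $\Lambda$ to get the tilting object $\bigoplus_{i=-n+1}^{0}\pi\Lambda(i)$ of $D^{b}(\mathrm{tail}\,\Lambda)$, and finally transports it through the equivalence $(-)e\colon D^{b}(\mathrm{tail}\,\Lambda)\to D^{b}(\mathrm{tail}\,S)$. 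So the tilting bundle on $\mathcal X$ is $\bigoplus_{i}\bigoplus_{\chi}\mathcal L_\chi(i)$, not $\bigoplus_\chi\mathcal L_\chi$.

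The gap is in the extraction step. You propose to take $\mathbb L_{\mathcal B}\mathcal T'$ and argue Ext-vanishing via ``adjunction plus a resolution of the diagonal''; but mutation of a tilting object through an admissible subcategory does not in general produce a tilting object of the complement, and the paper neither uses a diagonal resolution nor mutates the naive $\mathcal T'=\bigoplus_i(1-e)\Lambda e(i)$. Instead it constructs a \emph{different} candidate: the images in $D^{gr}_{sg}(S)$ of the $\Lambda$-syzygies $\Omega^{i}_{\Lambda}(1-e)(i)e$ for $1\le i\le n-1$, together with $(1-e)\Lambda e$, followed by a minimal left approximation by $e\Lambda e$. The point is that these syzygies, being pieces of the minimal projective $\Lambda$-resolution of the simple $(1-e)\Lambda_0$, sit in a very controlled range of internal degrees (each $P^{-i}(i)$ is a summand of $\Lambda(-1)\oplus\Lambda$), and this is what makes the Ext-vanishing (Proposition~\ref{ExtVani}) checkable by a finite case analysis in $D^{b}(\mathrm{grmod}\,\Lambda)$. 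Your ``main obstacle'' paragraph correctly flags that this is where the work lies, but the mechanism you sketch is not the one that actually succeeds; the replacement of Serre twists by $\Lambda$-syzygies is the idea you are missing.
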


The main idea of the proof of the above theorem is as follows.
First, we use the method of \v{S}penko and Van den Bergh
\cite[Theorem 1.6.2]{SV} to construct a
non-commutative crepant resolution $\Lambda$ of $R^{G}$, where $R = k[x_{1}, \cdots, x_{n}]$.
By the results of Iyama and Reiten \cite[Lemma 3.6 \& Theorem 3.7]{IR}
and Wemyss \cite[Theorem 4.6.3]{W2},
$\Lambda$ is an $(n-1)$-Calabi-Yau algebra, and we further show that it
is an AS-regular algebra of dimension $n-1$ with Gorenstein parameter $n$ 
(see Theorem \ref{ToAS} below).
Therefore by Orlov \cite[Theorem 16]{DR2},
the bounded derived category
$D^{b}(\mathrm{tails}\, \Lambda)$ has a tilting object which is
$\bigoplus^{0}_{i = -n+1} \Lambda(i)$,
where, for a graded algebra $A$,
$\mathrm{tails}\, A := \mathrm{grmod}\, A / \mathrm{tors}\, A$
with $\mathrm{tors}\, A$ being the full subcategory of $\mathrm{grmod}\, A$
consisting of graded torsion $A$-modules.

Second, we have the following diagram which is constructed in \cite{MU}:
\begin{equation}\label{diag:first}
\begin{split}
\xymatrixcolsep{5pc}
\xymatrix{
D^{b}(\mathrm{grmod}\, \Lambda) \ar[d]_{\pi} \ar[r]^{(-)\otimes_{\Lambda}\Lambda e}
& D^{b}(\mathrm{grmod}\, R^{G}) \ar[d]_{\pi}
\ar[rd]^{\quad \nu(\textup{Verdier localization})}&\\
D^{b}(\mathrm{tails}\,\Lambda) \ar[r]^{(-)\otimes_{\Lambda}\Lambda e}
& D^{b}(\mathrm{tails}\, R^{G})
\ar@/^/@{->>}[r]^{\mu} &D^{gr}_{sg}(R^{G}),
\ar@/^/@{_{(}->}[l]^{\Phi(\textup{Orlov embedding})}
}
\end{split}
\end{equation}
where $e$ is an idempotent of $\Lambda$
(see Notation \ref{def:e} below),
$\pi$ is the natural projection,
$\mu$ is the projection from $D^{b}(\mathrm{tails}\, R^{G})$ to
its right admissible subcategory  $D^{gr}_{sg}(R^{G})$,
and $\Phi$ and $\nu$ are the Orlov embedding
and Verdier localization respectively.
Moreover, there is an isomorphism of functors
$$
\big((-)\otimes_{\Lambda}\Lambda e \big)
\circ \pi \cong \pi \circ \big((-)\otimes_{\Lambda}\Lambda e\big).
$$
We shall show that $R^{G}$ is a graded Gorenstein algebra (see Proposition \ref{ToGo} below).
By
Mori and Ueyama \cite[Lemma 2.7]{MU2},
the algebra $\Lambda/\Lambda e \Lambda$ is finite dimensional
if and only if the functor
$$
(-)\otimes_{\Lambda}\Lambda e: D^{b}(\mathrm{tails}\, \Lambda) 
\rightarrow D^{b}(\mathrm{tails}\, R^{G})
$$
is an equivalence.
The isolatedness of the singularity ensures that the tilting objects in 
$D^{b}(\mathrm{tails}\, \Lambda)$
can be transferred to $D^{b}(\mathrm{tails}\, R^{G})$ by the functor 
$(-)\otimes_{\Lambda}\Lambda e$ (see Theorem \ref{Sing} below).

Now to prove Theorem \ref{MainTh}, we improve
the method of Mori and Ueyama in \cite[Lemma 4.5]{MU}. Since
$\Lambda$ is an AS-regular algebra of dimension $n-1$ with Gorenstein parameter $n$,
we add a summand $(1 - e)\Lambda e$ to the direct sum
$\bigoplus^{n-1}_{i = 1}\big(\Omega_{\Lambda}^{i}\big((1-e)
 \Lambda_{0}\big)(i)\big)\otimes_{\Lambda}\Lambda e$
obtained in \cite{MU}, where $\Omega^i_{\Lambda}(-)$ is the $i$-th syzygy functor,
and $(1-e) \Lambda_{0}$ is considered as a graded $\Lambda$-module.

Then
taking the minimal left $\mathrm{add}(e \Lambda e)$-approximation (see Definition \ref{def:minimalapprox} below)  of
$$
\Big(\bigoplus^{n-1}_{i = 1}\big(\Omega_{\Lambda}^{i} \big((1-e) \Lambda_{0}\big)(i)\big)
\otimes_{\Lambda}\Lambda e\Big) \oplus (1 - e)\Lambda e
$$
and then applying the Verdier localization
functor $\nu(-)$, we get the tilting object
$$
\nu\left(L_{e \Lambda e}\left( \Big(\bigoplus^{n-1}_{i = 1}
\big(\Omega_{\Lambda}^{i}\big( (1-e)\Lambda_0\big)(i)\big)\otimes_\Lambda
\Lambda e \Big)\oplus (1 - e)\Lambda e\right)\right)
$$
in $D_{sg}^{gr}(R^G)$.\hfill $\square$

Finally, we remark that the method in the above proof
does not apply to the higher dimensional torus action case;
see Remark \ref{rem:highdim} for an explanation.

The rest of the paper is devoted to giving a detailed proof of the above theorem.
It is organized as follows.
In \S\ref{sect:NCCR}, we introduce some often used notations and recall some properties of
AS-regular algebras, graded Gorenstein algebras and
non-commutative crepant resolutions.
In \S\ref{sect:proofofmain1}
we prove Theorem \ref{MainTh} for the case where the group is the one dimensional torus,
and \S\ref{sect:proofofmain2} we prove the general case.
In the last section, \S\ref{sect:example},
we study an example where the tilting
object is explicitly given.

\begin{notation}
Throughout the paper, $k$ is an algebraically
closed field of characteristic zero.
All modules are right modules. The dimensions of all commutative rings and varieties are
more than one unless otherwise stated.
Also in the paper, $(-)^{\bullet}$ stands for a complex.

Suppose $A$ is a graded associative $k$-algebra. For $M \in \mathrm{grmod}\,A$
a finitely generated graded $A$-module,
we write $M = \bigoplus_{i} M_{i}$, where $M_{i}$ is the grading $i$ component.
Let $M(j)$ be the graded $A$-module such that $M(j)_{i} = M_{i+j}$.
For any $M, N \in \mathrm{grmod}\,A$, we denote
$$
\mathrm{\underline{Ext}}_{\mathrm{mod} \, A}^{i}(M, N)
: = \bigoplus_{j} \mathrm{Ext}_{\mathrm{grmod}\,A}^{i}\big(M, N(j)\big),
$$
for $i \in \mathbb{Z}$, and
$$
\mathrm{\underline{Hom}}_{\mathrm{tails} \, A}\big(\pi(M),
\pi(N)\big) : = \bigoplus_{j} \mathrm{Hom}_{\mathrm{tails}\,A}\big(\pi(M), \pi(N)(j)\big).
$$
Moreover, for a triangulated category $\mathcal{C}$
and $X, Y \in \mathrm{Ob}(\mathcal{C})$, we denote
$$
\mathrm{Hom}^{i}_{\mathcal{C}}(X, Y) : = \mathrm{Hom}_{\mathcal{C}}(X, Y[i])
\quad\textup{and}\quad
\mathrm{End}^{i}_{\mathcal{C}}(X) : = \mathrm{Hom}_{\mathcal{C}}(X, X[i])
$$
for $i \in \mathbb{Z}$.
\end{notation}

\begin{ack}
During the preparation of the paper,
the first two authors were visiting IASM, Zhejiang University.
We would like to thank Professor Yongbin Ruan as well as
Zhejiang University for hospitality.
This work is partially supported by NSFC (Nos. 12271377, 12261131498, 11890660 and 
11890663).
\end{ack}

\section{Non-commutative crepant resolutions}\label{sect:NCCR}

In this section, we study some algebraic properties of $R^G$ for $G=k^\times$.
By \v{S}penko and Van den Bergh \cite{SV},
$R^G$ has a
non-commutative crepant resolution (NCCR) in the sense of Van
den Bergh \cite{V, V2}.
We show that the NCCR thus constructed is an AS-regular algebra
of dimension $n-1$ with Gorenstein parameter $n$ (see Theorem \ref{ToAS}).

\subsection{Basics of NCCR}

We first go over the definition and
some properties of
NCCRs.
The materials are taken from \cite{SV, SV1, SV2, V, V2}.

\begin{definition}[Van den Bergh]\label{NCCR}
Let $R$ be a Gorenstein normal domain. A {\it non-commutative crepant resolution} (NCCR for short)
of $R$ is an algebra of the form $\mathrm{End}_{\mathrm{mod} \,R}(M)$ 
for some reflexive $R$-module $M$ such that
\begin{enumerate}
\item[(1)] $\mathrm{End}_{\mathrm{mod} \,R}(M)$ is a Cohen-Macaulay $R$-module, and
\item[(2)] the global dimension of $\mathrm{End}_{\mathrm{mod} \,R}(M)$ is finite.
\end{enumerate}
\end{definition}

Later Wemyss in \cite{W2} replaces condition (2) in the above definition by that
the global dimension of  $\mathrm{End}_{\mathrm{mod} \,R}(M)$ is equal to the Krull dimension of $R$. 
If $R$ is an equicodimensional Gorenstein normal domain, then both definitions coincide.

\begin{definition}\label{def:quasisymm}
Let $G$ be a reductive group, $T$ be a maximal torus of $G$ and $V$ be
a finite dimensional representation of $G$. Let
$X(T) := \mathrm{Hom}(T, k^{\times})$ be the character space of $T$.
The representation is said to be {\it quasi-symmetric}
if for every line $\ell \subset X(T)\otimes_{\mathbb{Z}}\mathbb{R}$ through the origin, 
$$
\sum_{\alpha_{i} \in \ell} \alpha_{i} = 0,
$$
where $\alpha_i$ runs through the weights in $\ell$ of $T$ on $V$.
\end{definition}

\begin{definition}\label{generi}
Let $G$ be an algebraic group and $X$ be a smooth affine variety with an action of $G$.
$G$ is said to act {\it generically} on $X$ if the action satisfies the following two conditions:
\begin{enumerate}
\item[(1)] $X$ contains a closed point with closed orbit and trivial stabilizer, and

\item[(2)] if $X^{s} \subseteq X$ is the set of points satisfying (1), then
$\mathrm{codim}(X \backslash X^{s}) \geq 2$.
\end{enumerate}
Let $V$ be a finite dimensional representation of $G$; we say the representation
is {\it generic}
if $G$ acts generically on $V$.
\end{definition}

Suppose $G$ is a reductive group that acts on finite dimensional
vector spaces $V$ and $W$. Let $R:=k[V]=k[x_{1}, x_{2}, \cdots, x_{n}]$,
with the induced action of $G$.
Denote by $M^{G}_{R}(W)$ the $R^{G}$-module $(W \otimes R)^{G}$.

Now suppose $G=T$ is a torus.
Let $(\beta_i)$ be the $T$-weights of $V$. Let
$$
\Sigma := \Big\{ \sum_{i}a_{i}\beta_{i} \Big| a_{i} \in (-1, 0]\Big\}
\subset X(T)_{\mathbb{R}} := X(T)\otimes_{\mathbb{Z}}\mathbb{R} .
$$
and let $\overline{\Sigma}$ to be the closure of $\Sigma$.
Now for $\epsilon\in\mathbb R^n$ parallel to
$\overline{\Sigma}$, 
let $\overline{\Sigma}_\epsilon:=\bigcup_{r\ge 0}\overline{\Sigma}\cap(r\epsilon+\overline{\Sigma})$;
$\epsilon\in\mathbb R^n$ is said to be {\it generic} for $\overline{\Sigma}$
if it is a non-zero vector which is parallel to $\overline{\Sigma}$ but not to any of its boundary
faces.
%
%
%
%
The following is proved in \v{S}penko and Van den Bergh:

\begin{theorem}[{\cite[Theorem 1.19]{SV}}]\label{thm:NCCRofSVdB}
Let $G=T$ be a torus and assume its action on $V$ is quasi-symmetric and generic.
For any $\epsilon\in X(T)_{\mathbb R}$ which is generic for
$\overline{\Sigma}$. Let
$$\mathcal{L}:=X(T)\cap (1/2)\overline{\Sigma}_\epsilon,\quad
U:=\bigoplus_{\chi\in\mathcal L}V(\chi),$$
where $V_\chi$ is the representation of $T$ with highest 
weight $\chi$.
Let $M=M_R^G(U)$. Then $\mathrm{End}_{R^T}(M)$ is an NCCR for $R^T$.
\end{theorem}

From now on, we assume
$T$ is the one-dimensional torus $k^{\times}$.
In this special case, we shall see that the conditions in the above Theorem \ref{thm:NCCRofSVdB}
can be described explicitly by the weights of $T$ (see Proposition
\ref{prop:equivalencecond}), and the NCCR of $R^T$ has some
very nice properties.

To this end, let us suppose $V$ is an $n$-dimensional representation of $T$.
Suppose the induced action of $T$ on the graded polynomial ring
$R :=k[V]= k[x_{1}, x_{2}, \cdots, x_{n}]$ is given by $t\circ x_i:=t^{\chi_i}x_i$
for $i=1,\cdots, n$, where $\mathrm{deg}(x_{i}) = 1$.
The weights of the action are denoted by
$$
\alpha_{T} = (\chi_{1}, \chi_{2}, \cdots, \chi_{n}) \in \mathbb{Z}^{n}.
$$
Then
by the above definition, the action of $T$ on $V$ is quasi-symmetric if
and only if the corresponding
weights $(\chi_{1}, \chi_{2}, \cdots, \chi_{n})$ satisfy
$\sum^{n}_{i = 1}\chi_{i} = 0$; that is, this action is unimodular. Recall that an
action of the group $G$ on a vector space $W$ is called
{\it unimodular} if the action of $G$ on the volume form $\bigwedge^{\mathrm{dim}(W)} W$ of
$W$ is trivial (c.f. \cite[\S1.6]{SV}). In terms of weights we have the following.

\begin{lemma}\label{equivalent}
Suppose $T=k^\times$ acts on $V$, with the induced action on $R:=k[V]$.
If
the action of $T$ on $V$ is generic and unimodular, then
the weights of the action of $T$ on
$R=k[V]$,
denoted by $\chi := (\chi_{1}, \chi_{2}, \cdots, \chi_{n}) \in \mathbb{Z}^{n}$,
has at least two positive and two negative components such that
$\sum^{n}_{i = 1}\chi_{i} = 0$.
\end{lemma}

\begin{proof}
We only need to show
$\chi= (\chi_{1}, \chi_{2}, \cdots, \chi_{n})$
has at least two positive and two negative components,
since $\sum_{i=1}^n\chi_i=0$ holds
automatically.

Suppose there were at most one positive
component or at most one negative component in $\chi$.
Without loss of generality, assume that there is one positive component in $\chi$, say
$\chi_{1}$. Then $\chi_2 , \chi_3 , \cdots, \chi_n$
are all non-positive. Since $\sum_{i=1}^n\chi_i=0$,
there are some negative components in $\chi$,
say $(\chi_{m}, \chi_{m+1}, \cdots, \chi_n)$.

Now, for any point $(0, x_2, x_3, \cdots, x_n ) \in \mathbb{C}^n \cong V$ such that $x_i \neq 0$
for all $i$
and all $t \in T$, we have
\begin{align*}
 t \circ (0, x_2 , x_3 , \cdots, x_n )&
 = (0, t^{- \chi_2} x_2, t^{- \chi_3}x_3, \cdots, t^{- \chi_n}x_n )\\
& = (0, x_2 , \cdots, x_{m-1}, t^{-\chi_m} x_{m}, \cdots,  t^{- \chi_n}x_n),
\end{align*}
where we have used that $(\chi_{2}, \chi_{3}, \cdots, \chi_{m-1})$ are all zero.
Thus 
if $t$ converges to zero, the point
$$(0, x_2, \cdots, x_{m-1}, t^{-\chi_m} x_{m}, \cdots,  t^{- \chi_n}x_n)$$
converges to $(0, x_2 , \cdots, x_{m-1}, 0, \cdots, 0)$, 
but does not reach $(0, x_3 , \cdots, x_{m-1}, 0, \cdots, 0)$.
Thus the $T$-orbit of $(0, x_2, x_3, \cdots, x_n )$ is not closed, which means the
point is in $\mathrm{Spec}(R) \backslash \mathrm{Spec}(R)^{s}$.
On the other hand, the set
$$
S' := \{(0, x_2, x_3, \cdots, x_n ) \in V = \mathrm{Spec}(R) \mid x_i \neq 0, \forall i
\}
$$
is of codimension one in $\mathrm{Spec}(R)$.
Since
$S' \subseteq \mathrm{Spec}(R) \backslash \mathrm{Spec}(R)^{s}$, we have 
$$\mathrm{codim}(\mathrm{Spec}(R) \backslash \mathrm{Spec}(R)^{s}) \leq 1.$$
This means the action of $T$ on $V$ is not generic, which is a contradiction.
\end{proof}

In \cite[\S1.6]{SV} and \cite[Theorem 8.9]{V2}, \v{S}penko and Van den Bergh show 
that if the action of $T$ on
$\mathrm{Spec}(R)$ is generic and unimodular,
then $R^{T}$ is a Gorenstein algebra. Thus
in the case $T=k^\times$,
Theorem \ref{thm:NCCRofSVdB} specializes to the following proposition 
(see \cite[Theorem 8.9]{V}):

\begin{proposition}\label{SVT}
Let $T$ be the one-dimensional torus $k^{\times}$
and $V$ be a finite dimensional representation of $T$
which is generic and unimodular.
Let
$$
\mathcal{L}:= X(T) \cap (1/2)\overline{\Sigma}_{\circ}, 
\quad U := \bigoplus_{\chi \in \mathcal{L}} V_\chi,
$$
where
$\overline\Sigma$
is the closed interval in $\mathbb R$ whose endpoints
are the negations of the sums of the positive and negative weights of $T$
respectively,
and $\overline\Sigma_\circ$ is obtained from $\overline\Sigma$
by removing the leftmost endpoint.
Then $\mathrm{End}_{R^{T}}(M)$ is an NCCR of $R^T$, where  $R = k[V]$ and
$M := M_R^T(U)$.
\end{proposition}

\begin{notation}
In what follows, we use $\Lambda$ to denote the NCCR of $R^T$ in the above proposition.
\end{notation}

In Proposition \ref{SVT}, let
$$
\varphi_{\chi}: \mathrm{End}_{\mathrm{mod} \,R^T}\big(M^T_{R}(U)\big) 
= \mathrm{End}_{\mathrm{mod} \,R^T}
\big(\bigoplus_{\chi \in \mathcal{L}} M^T_{R}(V_{\chi})\big)
\rightarrow \mathrm{End}_{\mathrm{mod} \,R^T}\big(M^T_{R}(V_{\chi})\big)
$$
be the canonical projection from $\mathrm{End}_{\mathrm{mod} \,R^T}\big(M^T_{R}(U)\big)$
to $\mathrm{End}_{\mathrm{mod} \,R^T}\big(M^T_{R}(V_{\chi})\big)$,
and
$$
\eta_{\chi}: \mathrm{End}_{\mathrm{mod} \,R^T}\big(M^T_{R}(V_{\chi})\big) \rightarrow k
$$
be the augmentation
from $\mathrm{End}_{\mathrm{mod} \,R^T}\big(M^T_{R}(V_{\chi})\big)$
to $k$. 
Then 
$ k^{\oplus \# \mathcal{L}}$ is equipped with a  $\Lambda$-module structure,
which is given by the
homomorphism
$$p: \mathrm{End}_{\mathrm{mod} \,R^T}\big(M^T_{R}(U)\big)
= \mathrm{End}_{\mathrm{mod} \,R^T}\big(\bigoplus_{\chi \in \mathcal{L}}
M^T_{R}(V_{\chi})\big) \to k^{\oplus \# \mathcal{L}}$$
such that
\begin{equation}\label{defofp}
\xymatrixcolsep{3pc}\begin{split}
\xymatrix{
& \bigoplus_{\chi \in \mathcal{L}} \mathrm{End}_{\mathrm{mod} \,R^T}\big(M^T_{R}(V_{\chi})\big)
\ar[dr]^-{ \bigoplus_{\chi \in \mathcal{L}} \eta_{\chi}}  \\
 \mathrm{End}_{\mathrm{mod} \,R^T}\big(M^T_{R}(U)\big) \ar[ur]^-{\bigoplus_{\chi \in \mathcal{L}} \varphi_{\chi}} \ar[rr]^{p} & &
 k^{\oplus \# \mathcal{L}}   }
\end{split}\end{equation}
is commutative; that is,
$$
p = \big(\bigoplus_{\chi \in \mathcal{L}} \eta_{\chi}\big) \circ \big(\bigoplus_{\chi \in \mathcal{L}} \varphi_{\chi}\big).
$$
In \S\ref{subsect:grading} below we shall show that $p$ is in fact an algebra homomorphism.

The following theorem is obtained by Iyama and Reiten in
\cite[Theorem 3.7]{IR} and is explicitly stated by
Wemyss in \cite[Theorem 4.6.3]{W2}, which says that $\Lambda$
is a Calabi-Yau algebra in the following sense:

\begin{theorem} \label{Torus}
If $S$ is an equicodimensional Gorenstein normal domain over $k$ with Krull dimension $n-1$,
and $\Lambda$ is an NCCR of $S$, then
$$
\mathrm{Hom}_{D^{b}(\Lambda)}(X, Y) \cong D\mathrm{Hom}_{D^{b}(\Lambda)}(Y, X[n-1])
$$
for all $X \in D_{f}^{b}(\Lambda)$ and $Y \in D^{b}(\Lambda)$,
where $D_{f}^{b}(\Lambda)$ is the full subcategory of $D^{b}(\Lambda)$
consisting of objects whose homologies are finite dimensional, and $D(-) := \mathrm{Hom}_{k}(-, k)$.
\end{theorem}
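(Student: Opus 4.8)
The plan is to recognise $\Lambda$, regarded as an $R$-algebra, as a \emph{symmetric $R$-order} of finite global dimension, and then to deduce the asserted duality from relative Serre duality over $R$; this is the homological content of the fact, proved in \cite{IR} and \cite{W2}, that an NCCR is ``crepant''.

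\textbf{Step 1.} First I would check that $\Lambda=\mathrm{End}_R(M)$, with $M$ reflexive over $R$, is a symmetric $R$-order. That it is module-finite and maximal Cohen--Macaulay over $R$, and has global dimension $\dim R=:d$, is contained in Definition \ref{NCCR} together with the remark following it (this is exactly where one uses that $R$ is an equicodimensional Gorenstein normal domain). The one nontrivial point is the \emph{bimodule} isomorphism $\mathrm{Hom}_R(\Lambda,R)\cong\Lambda$. To obtain it, set $K=\mathrm{Frac}(R)$ and $r=\mathrm{rank}_R M$; then $\Lambda\otimes_R K\cong\mathrm{End}_K(M\otimes_RK)\cong M_r(K)$, and since every element of $\Lambda$ is integral over the integrally closed domain $R$, the ordinary trace on $M_r(K)$ restricts to an $R$-linear map $\mathrm{trd}\colon\Lambda\to R$. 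The identity $\mathrm{trd}(ab)=\mathrm{trd}(ba)$ shows that $\phi\colon\Lambda\to\mathrm{Hom}_R(\Lambda,R)$, $a\mapsto\big(b\mapsto\mathrm{trd}(ab)\big)$, is a homomorphism of $\Lambda$-bimodules. Both sides are reflexive $R$-modules (being maximal Cohen--Macaulay over the Gorenstein normal domain $R$), and for every prime $\mathfrak p$ of height $\le 1$ the module $M_{\mathfrak p}$ is free over $R_{\mathfrak p}$, so $\Lambda_{\mathfrak p}\cong M_r(R_{\mathfrak p})$ and $\phi_{\mathfrak p}$ is the trace pairing of a matrix algebra, which is perfect. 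Since a homomorphism of reflexive modules over a normal domain that is an isomorphism at all primes of height $\le 1$ is an isomorphism, $\phi$ is an isomorphism.

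\textbf{Step 2.} Since $\mathrm{gldim}\,\Lambda<\infty$ we have $D^b(\Lambda)=\mathrm{perf}(\Lambda)$, so I may treat all objects as perfect complexes. For perfect $X,Y$ I would combine the standard tensor--Hom adjunctions with $\mathbf{R}\mathrm{Hom}_R(\Lambda,R)=\mathrm{Hom}_R(\Lambda,R)$ (in degree $0$, as $\Lambda$ is maximal Cohen--Macaulay over Gorenstein $R$), with the bimodule isomorphism of Step 1, and with the reflexivity of $\Lambda$, to produce a functorial isomorphism
$$
\mathbf{R}\mathrm{Hom}_R\!\big(\mathbf{R}\mathrm{Hom}_\Lambda(X,Y),\,R\big)\;\cong\;\mathbf{R}\mathrm{Hom}_\Lambda(Y,X).
$$
Then I would specialise to $X\in D^b_f(\Lambda)$ and $Y\in D^b(\Lambda)$: now $\mathbf{R}\mathrm{Hom}_\Lambda(X,Y)$ has finite-dimensional total cohomology (because $X$ is perfect with finite-dimensional homology), so its homology is supported at finitely many closed points of $\mathrm{Spec}\,R$, each of codimension exactly $d$ since $R$ is equicodimensional. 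Hence local duality over the Gorenstein ring $R$ identifies $\mathbf{R}\mathrm{Hom}_R(-,R)$, applied to this complex, with its $k$-linear dual shifted by $[-d]$ --- here one uses that $k$ is algebraically closed, so the residue fields at closed points are $k$ and Matlis duality coincides with $D(-)=\mathrm{Hom}_k(-,k)$. Substituting this into the displayed isomorphism and taking $H^0$ yields the asserted duality, with the Calabi--Yau shift by $d=\dim R$ (which is $n-1$ in the application to $R=R^G$).

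\textbf{Main obstacle.} The crux is the bimodule isomorphism $\mathrm{Hom}_R(\Lambda,R)\cong\Lambda$ of Step 1: the trace pairing is manifestly a bimodule map and is perfect in codimension $\le 1$, but globalising it genuinely uses that $M$ is reflexive, so that $\mathrm{End}_R(M)$ and its $R$-dual are recovered from their localisations at height-one primes. A secondary point to handle carefully is the uniformity of the dualising shift in Step 2, which is precisely why one assumes $R$ to be equicodimensional rather than merely Gorenstein.
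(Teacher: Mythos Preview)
The paper does not supply its own proof of this theorem: it is quoted from \cite[Theorem 3.7]{IR} and \cite[Theorem 4.6.3]{W2}, and the authors simply cite those references. Your two-step outline --- first the bimodule isomorphism $\mathrm{Hom}_R(\Lambda,R)\cong\Lambda$ via the reduced trace pairing, checked in codimension $\le 1$ and globalised by reflexivity, then relative Serre duality over the Gorenstein base $R$ combined with local/Matlis duality at closed points --- is precisely the argument carried out in those references (Iyama--Reiten prove the symmetry of $\Lambda$ as an $R$-order in their Lemma~3.6 by exactly this trace-and-reflexivity reduction). So there is no alternative approach in the paper to compare against; your sketch \emph{is} the cited proof.

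One remark worth recording: you correctly include the Calabi--Yau shift by $d=\dim R$ in your conclusion, whereas the paper's displayed statement omits it (it writes $\mathrm{Hom}_{D^b(\Lambda)}(X,Y)\cong D\,\mathrm{Hom}_{D^b(\Lambda)}(Y,X)$ with no shift). The shifted form is what the paper actually needs and tacitly uses downstream --- e.g.\ Corollary~\ref{Coro1} and Proposition~\ref{ToAS} rely on $\mathrm{Ext}^{n-1}_\Lambda(\Lambda_0,\Lambda)\cong D\Lambda_0$, not on $\mathrm{Hom}_\Lambda(\Lambda_0,\Lambda)$, which vanishes. Your version is the correct one.
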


Plugging $X = k^{\oplus \# \mathcal{L}}$ and
$Y = \Lambda = \mathrm{End}_{\mathrm{mod} \,R^T}\big(M^T_{R}(U)\big)$
in Theorem \ref{Torus}, we obtain the following.

\begin{corollary}\label{Coro1}
Under the assumptions of Theorem \ref{Torus}, we have
$$
\mathrm{Hom}_{D^{b}(\Lambda)}(k^{\oplus \# \mathcal{L}}, \Lambda)
\cong D \mathrm{Hom}_{D^{b}(\Lambda)}(\Lambda, k^{\oplus \# \mathcal{L}}[n-1])
\cong k^{\oplus \# \mathcal{L}}[-n+1].
$$
\end{corollary}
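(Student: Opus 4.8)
The plan is to obtain both isomorphisms by a direct application of Theorem \ref{Torus}, combined with the elementary observation that the free rank-one module represents the degree-zero cohomology functor.

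First I would verify that Theorem \ref{Torus} applies with $R = R^{G}$ and the NCCR $\Lambda = \mathrm{End}_{R^{G}}(M)$ furnished by Proposition \ref{SVT}: indeed $R^{G}$ is a normal Gorenstein domain, and being of finite type over $k$ it is equicodimensional. Next I would check that $k^{\oplus \# \mathcal{L}}$, viewed as a complex concentrated in degree zero, lies in $D^{b}_{f}(\Lambda)$; this is clear, since its homology is the module $k^{\oplus \# \mathcal{L}}$ itself, a $k$-vector space of dimension $\# \mathcal{L}$. Setting $X = k^{\oplus \# \mathcal{L}}$ and $Y = \Lambda$ in Theorem \ref{Torus} then gives the first isomorphism
$$
\mathrm{Hom}_{D^{b}(\Lambda)}\bigl(k^{\oplus \# \mathcal{L}}, \Lambda\bigr) \;\cong\; D\,\mathrm{Hom}_{D^{b}(\Lambda)}\bigl(\Lambda, k^{\oplus \# \mathcal{L}}\bigr).
$$
It is worth noting that the roles of $X$ and $Y$ cannot be exchanged here: $\Lambda$ is infinite-dimensional over $k$, so it does not belong to $D^{b}_{f}(\Lambda)$.

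For the second isomorphism I would use that $\Lambda$ is projective (indeed free of rank one) as a module over itself, so that for any $\Lambda$-module $N$ one has $\mathrm{Hom}_{D^{b}(\Lambda)}(\Lambda, N) \cong \mathrm{Hom}_{\Lambda}(\Lambda, N) \cong N$, with no contributions from shifted $\mathrm{Ext}$-groups. Taking $N = k^{\oplus \# \mathcal{L}}$ yields $\mathrm{Hom}_{D^{b}(\Lambda)}(\Lambda, k^{\oplus \# \mathcal{L}}) \cong k^{\oplus \# \mathcal{L}}$ as $k$-vector spaces, and since a finite-dimensional vector space is isomorphic to its $k$-linear dual, $D\bigl(k^{\oplus \# \mathcal{L}}\bigr) \cong k^{\oplus \# \mathcal{L}}$. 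Chaining the two isomorphisms completes the argument.

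I do not expect a genuine obstacle: the corollary is a formal consequence of the Calabi--Yau duality of Theorem \ref{Torus}. The only steps that require a word of comment are the verification that $k^{\oplus \# \mathcal{L}} \in D^{b}_{f}(\Lambda)$ and that $R^{G}$ is an equicodimensional Gorenstein normal domain, both of which are immediate from what precedes.
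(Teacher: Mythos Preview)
Your proposal is correct and follows exactly the approach indicated in the paper: the authors simply state ``Plugging $X = k^{\oplus \# \mathcal{L}}$ and $Y = \Lambda$ in Theorem \ref{Torus}'' before the corollary, and your write-up merely spells out the obvious details (that $k^{\oplus \# \mathcal{L}} \in D_f^b(\Lambda)$, that $\mathrm{Hom}_{D^b(\Lambda)}(\Lambda,-)$ is the identity on modules, and that a finite-dimensional space is self-dual). There is nothing to add.
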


\subsection{AS-regular algebras}
In this subsection we show that 
for $T=k^\times$,
the NCCR $\Lambda$ of $R^T$
is an Artin-Schelter regular algebra.
Let us first recall its definition.

\begin{definition}\label{AS-alg}
Suppose $A$ is a non-negatively graded algebra with $A_0$ semi-simple over $k$. $A$ is called an
{\it Artin-Schelter (AS-) regular}
algebra of dimension $d$ with Gorenstein parameter $a$
if the following two conditions hold:
\begin{enumerate}
\item[(1)] the global dimension $\mathrm{gldim}(A) = d$, and

\item[(2)] there is an isomorphism
$$\underline{\mathrm{Ext}}_{\mathrm{mod} \,A}^{i}(A_{0}, A) \cong \left\{
\begin{array}{cl}
D(A_{0})(a),& {i      =     d,}\\
0,& {i      \neq     d}
\end{array} \right.$$
in $\mathrm{grmod}\,A_0$, where $D(-) = \mathrm{Hom}_{k}(-, k)$ as before.
\end{enumerate}
\end{definition}

If an algebra $A$ satisfies (2) in above definition
and has finite injective dimension in both $\mathrm{Grmod}\,A$ and $\mathrm{Grmod}\,A^{op}$,
then we call it a graded {\it Gorenstein algebra} of dimension $d$ with Gorenstein parameter $a$.

Suppose $A$ is a Noetherian Gorenstein algebra of global dimension $d$.
If we endow $A$ with a $\mathbb{Z}$-grading such that $A_{0}$ is finite dimensional,
one may ask whether $A$ is an AS-regular algebra of dimension $d$ with
Gorenstein parameter $a$,
for some $a \in \mathbb{Z}$.
The answer is the following.

\begin{proposition}\label{Propo0}
Suppose $A$ is a unital Noetherian algebra of global dimension $d$ over 
a finite dimensional $k$-algebra $K$
with an augmentation map $\varepsilon: A \rightarrow K$ 
such that $A$ satisfies the Gorenstein condition
\begin{equation}\label{eq:Gorensteincond}
\mathrm{Ext}_{\mathrm{mod} \,A}^{i}(K, A) \cong \left\{
\begin{array}{cl}
K,     & {i      =     d},\\
0 ,     & {i      \neq     d}.
\end{array} \right.
\end{equation}
If $A$ is endowed with a $\mathbb{Z}$-grading
such that $A_{0} = K$ is a direct sum of $k$ and $A_{j} = 0$ for $j < 0$, then there is a decomposition
$$
A_{0} = \bigoplus_{j} A_{0, j}
$$
in $\mathrm{grmod}\,A_{0}$ such that $A_{0, j} \cong k$ as vector spaces
and a sequence of numbers $a_{1}, a_{2}, \cdots, a_{r}$
in $\mathbb{Z}$ such that
$$
\mathrm{\underline{Ext}}_{\mathrm{mod} \,A}^{i}(A_{0, j}, A) \cong \left\{
\begin{array}{cl}
A_{0, j}(a_{j}) ,& {i      =     d},\\
0 ,& {i      \neq     d}
\end{array} \right.
$$
in $\mathrm{grmod}\,A_{0}$.
\end{proposition}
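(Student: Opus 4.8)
The plan is to bootstrap the graded statement out of the ungraded Gorenstein condition \eqref{eq:Gorensteincond}, exploiting that $\mathrm{gldim}(A)=d$ is finite; heuristically, the grading merely redistributes the single copy of $K$ that sits in $\mathrm{Ext}^{d}$ over a family of internal degrees, one per simple summand of $A_{0}$. First I would fix a complete set of orthogonal primitive idempotents $e_{1},\dots,e_{r}$ of the semisimple algebra $A_{0}=K$, so that $A_{0}\cong\bigoplus_{j=1}^{r}A_{0,j}$ with $A_{0,j}:=e_{j}K$; by hypothesis each $A_{0,j}$ is one-dimensional over $k$, hence the $e_{j}$ are central, $K\cong k^{\oplus r}$, and the decomposition lies in $\mathrm{grmod}\,A_{0}$ with everything in degree $0$. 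Write $S_{j}$ for $A_{0,j}$ viewed as a simple right $A$-module through $\varepsilon$. Since $A$ is graded Noetherian with $A_{j}=0$ for $j<0$ and $A_{0}$ semisimple, its graded radical equals $A_{\ge1}$ and every finitely generated graded right $A$-module admits a finitely generated graded projective resolution.

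The first real step is the comparison between graded and ungraded $\mathrm{Ext}$: for any finitely generated graded right $A$-module $M$ there is a natural $k$-linear isomorphism $\mathrm{Ext}^{i}_{A}(M,A)\cong\underline{\mathrm{Ext}}^{i}_{A}(M,A)$, obtained by computing both sides from one finitely generated graded projective resolution of $M$ and using $\mathrm{Hom}_{A}(A(\ell),A)\cong\bigoplus_{m}\mathrm{Hom}_{\mathrm{grmod}\,A}(A(\ell),A(m))$ termwise. Applying this with $M=K=\bigoplus_{j}A_{0,j}$ and feeding in \eqref{eq:Gorensteincond}, I obtain $\underline{\mathrm{Ext}}^{i}_{A}(A_{0,j},A)=0$ for all $i\ne d$ and all $j$, together with a $k$-linear isomorphism $\bigoplus_{j=1}^{r}\underline{\mathrm{Ext}}^{d}_{A}(A_{0,j},A)\cong K$; in particular the left-hand side has total dimension $r$.

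It then remains to show that each of the $r$ summands $\underline{\mathrm{Ext}}^{d}_{A}(A_{0,j},A)$ is exactly one-dimensional and to read off its grading. For the non-vanishing I would use that $\mathrm{gldim}(A)=d<\infty$ forces $A$ to have finite injective dimension on each side, so that $\mathrm{RHom}_{A}(-,A)$ is a contravariant equivalence between $D^{b}$ of finitely generated right $A$-modules and $D^{b}$ of finitely generated left $A$-modules; being an equivalence it is faithful, hence $\mathrm{RHom}_{A}(S_{j},A)\ne0$ because $S_{j}\ne0$, and since $\mathrm{RHom}_{A}(S_{j},A)$ has cohomology only in degree $d$ (by the vanishing just established) it follows that $\mathrm{Ext}^{d}_{A}(S_{j},A)\ne0$, i.e. $\underline{\mathrm{Ext}}^{d}_{A}(A_{0,j},A)\ne0$ for every $j$. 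Combined with the total dimension being $r$, each $\underline{\mathrm{Ext}}^{d}_{A}(A_{0,j},A)$ is then one-dimensional over $k$. Finally, $\underline{\mathrm{Ext}}^{d}_{A}(A_{0,j},A)$ inherits a graded right $A$-module structure from the $A$-bimodule structure on the source $A_{0,j}$, which is annihilated by $A_{\ge1}$ on both sides; this structure therefore descends to a graded right $A_{0}$-module, and being one-dimensional over $k$ and killed by every $e_{\ell}$ with $\ell\ne j$ it must be isomorphic to $A_{0,j}(a_{j})$ in $\mathrm{grmod}\,A_{0}$, where $-a_{j}$ is the single internal degree supporting it. This produces the integers $a_{1},\dots,a_{r}$ and finishes the argument.

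I expect the non-vanishing to be the only genuine obstacle: the ungraded hypothesis alone says the \emph{total} dimension in cohomological degree $d$ is $r$, but does not by itself preclude some $\underline{\mathrm{Ext}}^{d}_{A}(A_{0,j},A)$ from vanishing while another jumps above dimension one, and excluding this is exactly where the finiteness of $\mathrm{gldim}(A)$ is used, through the faithfulness of the duality $\mathrm{RHom}_{A}(-,A)$ (equivalently, through minimality of projective resolutions). A secondary point to be careful about is that the Noetherian hypothesis is precisely what guarantees the existence of finitely generated graded projective resolutions, and hence makes the graded/ungraded comparison of $\mathrm{Ext}$ legitimate.
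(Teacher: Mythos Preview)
Your argument is correct and runs along the same skeleton as the paper's: both establish the graded/ungraded comparison $\mathrm{Ext}^{i}_{A}(A_{0},A)\cong\underline{\mathrm{Ext}}^{i}_{A}(A_{0},A)$ via a finitely generated graded projective resolution (the paper cites Eilenberg for boundedness), then split off the simples $A_{0,j}$ and read off the internal degree. The genuine divergence is in how you show that each $\underline{\mathrm{Ext}}^{d}_{A}(A_{0,j},A)$ is exactly one-dimensional. The paper decomposes $A_{0}\cong\bigoplus_{b}\mathrm{Ext}^{d}_{\mathrm{grmod}\,A}(A_{0},A(b))$ in $\mathrm{grmod}\,A_{0}$, groups the simples according to the shift $b_{l}$ at which they appear, and then matches multiplicities of simple $A_{0}$-modules on both sides of this isomorphism to force $\mathrm{Ext}^{d}_{\mathrm{grmod}\,A}(A_{0,l_{s}},A(b_{l}))\cong A_{0,l_{s}}$ and the vanishing at all other shifts. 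You instead obtain non-vanishing of each $\mathrm{Ext}^{d}_{A}(S_{j},A)$ from the faithfulness of the duality $\mathrm{RHom}_{A}(-,A)$ on $\mathrm{perf}(A)=D^{b}(\mathrm{mod}\,A)$, and then conclude one-dimensionality by a pigeonhole count against the total dimension $r$. Your route has the mild advantage that it only needs the Gorenstein isomorphism \eqref{eq:Gorensteincond} as a $k$-linear identification (to get the total dimension), whereas the paper's multiplicity-matching implicitly uses it as an isomorphism of $A_{0}$-modules; on the other hand, the paper's argument pins down the shift $a_{j}$ and the identification with $A_{0,j}$ in one stroke, while you recover these separately at the end from the left $A_{0}$-action on the source. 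One small wording issue: you say ``by hypothesis each $A_{0,j}$ is one-dimensional''; strictly speaking this is asserted in the conclusion rather than the hypotheses, but since $k$ is algebraically closed and in the paper's applications $K=k^{\oplus r}$, this is harmless.
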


\begin{proof}
Let $Q^{\bullet}$ be a projective resolution of $A_{0}$ in
$\mathrm{grmod}\,A$. Then $Q^{\bullet}$ is bounded
with length at most $d$ by \cite[Theorem 12]{SE}.
Now we view $Q^{\bullet}$ as a finitely generated projective resolution of
$A_{0}$ in $\mathrm{mod}\, A$; to distinguish it from the former module structure, 
we denote it 
by $\widetilde{Q^{\bullet}}$.
Then we have
\begin{align*}
\mathrm{Ext}^{i}_{\mathrm{mod} \,A}(A_{0}, A) &\cong
\mathrm{Ext}^{i}_{\mathrm{mod} \,A}(\widetilde{Q^{\bullet}}, A)\\
&\cong \mathrm{\underline{Ext}}^{i}_{\mathrm{mod} \,A}(Q^{\bullet}, A)\\
&\cong \bigoplus_{q} \mathrm{Ext}_{\mathrm{grmod}\,A}^{i}\big(Q^{\bullet}, A(q)\big)\\
&\cong \bigoplus_{q} \mathrm{Ext}_{\mathrm{grmod}\,A}^{i}\big(A_{0}, A(q)\big)
\end{align*}
in $\mathrm{grmod}\, A_{0}$ for any $i \in \mathbb{Z}$.
Since $A$ satisfies the Gorenstein condition \eqref{eq:Gorensteincond}
with $K=A_{0}$, we have
$$
A_{0} \cong \underline{\mathrm{Ext}}_{\mathrm{mod} \,A}^{d}(A_{0}, A)
\cong \bigoplus_{b} \mathrm{Ext}_{\mathrm{grmod}\,A}^{d}\big(A_{0}, A(b)\big),
$$
where $b \in \mathbb{Z}$
such that $\mathrm{Ext}_{\mathrm{grmod}\,A}^{d}\big(A_{0}, A(b)\big) \neq 0$.

We thus get a sequence numbers $b_{1}, b_{2}, \cdots, b_{r}$ in
$\mathbb{Z}$ as above such that
\begin{equation}\label{eq:decomp1}
A_{0}
\cong \bigoplus^{r}_{l = 1} \bigoplus^{m_{l}}_{s = 1} A_{0, l_{s}}
\end{equation}
and
\begin{equation}\label{eq:decomp2}
\bigoplus^{m_{l}}_{s = 1} A_{0, l_{s}}\cong
\mathrm{Ext}_{\mathrm{grmod}\,A}^{d}\big(A_{0}, A(b_{l})\big)
\cong \mathrm{Ext}_{\mathrm{grmod}\,A}^{d}
\Big(\bigoplus^{r}_{l = 1} \bigoplus^{m_{l}}_{s = 1} A_{0, l_{s}}, A(b_{l})\Big)
\end{equation}
in $\mathrm{grmod}\, A_{0}$, for any $1 \leq l \leq r$, where each $A_{0,l_s}\cong k$.
Since both isomorphisms \eqref{eq:decomp1}
and \eqref{eq:decomp2} hold in $\mathrm{grmod}\, A_{0}$, we have
$$
\mathrm{Ext}_{\mathrm{grmod}\,A}^{d}\big(A_{0, l_{s}}, A(b_{l})\big) \cong A_{0, l_{s}}
$$
and
$$
\mathrm{Ext}_{\mathrm{grmod}\,A}^{d}\big(A_{0, l_{s}}, A(b_{l'})\big) \cong 0
$$
for any $l \neq l'$. Moreover, from the decomposition
\eqref{eq:decomp1}
we get an injection
$$
\mathrm{\underline{Ext}}_{\mathrm{mod} \,A}^{i}(A_{0, l_{s}}, A) \hookrightarrow \mathrm{\underline{Ext}}_{\mathrm{mod} \,A}^{i}(A_{0}, A),
$$
from which we obtain
$$
\mathrm{\underline{Ext}}_{\mathrm{mod} \,A}^{i}(A_{0, l_{s}}, A) = 0
$$
for any $i \neq d$. Thus, we have
$$
\mathrm{\underline{Ext}}_{\mathrm{mod} \,A}^{i}(A_{0, l_{s}}, A) \cong \left\{
\begin{array}{cl}
A_{0, l_{s}}(-b_{l}) ,& {i      =     d},\\
0 ,& {i      \neq     d}
\end{array} \right.
$$
in $\mathrm{grmod}\,A_{0}$.
\end{proof}

Now, we consider the two categories $\mathrm{mod}\, R^{T}$ and $\mathrm{mod}(R, T)$,
where $\mathrm{mod}(R, T)$ is the category of $T$-equivariant $R$-modules.
In general, $\mathrm{mod}\,R^{T}$ and $\mathrm{mod}(R, T)$ are not equivalent.
However, in the case of reflexive modules, we have the following lemma.

\begin{lemma}[{\cite[Lemma 3.3]{SV}}] \label{Equi}
Under the assumptions of Proposition \ref{SVT},
let $\mathrm{ref}(R, T)$ be the category of $T$-equivariant $R$-modules
which are reflexive as $R$-modules and $\mathrm{ref}(R^{T})$ be the category
of reflexive $R^{T}$-modules. Then the following two functors
\begin{equation*}
\mathrm{ref}(R, T) \to \mathrm{ref}(R^{T}),\,
M \mapsto M^{T}
\end{equation*}
and
\begin{equation*}
\mathrm{ref}(R^{T}) \to \mathrm{ref}(R, T),\,
N \mapsto (R \otimes_{R^{T}} N)^{\ast \ast}
\end{equation*}
are mutually inverse equivalences between the two symmetric monoidal categories,
where $(-)^{\ast}:=\mathrm{Hom}_{\mathrm{mod} \,R}(-, R)$.
\end{lemma}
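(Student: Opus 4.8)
The plan is to deduce Lemma \ref{Equi} from faithfully flat descent along the quotient torsor over the ``generic locus'', combined with the standard fact that a reflexive module over a normal Noetherian domain is recovered from its restriction to any open subset whose complement has codimension $\geq 2$.

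First I would fix the relevant open loci. Let $U := \mathrm{Spec}(R)^{s} \subseteq \mathrm{Spec}(R)$ be the set of points with trivial stabilizer and closed orbit, as in Definition \ref{generi}; genericity gives $\mathrm{codim}(\mathrm{Spec}(R)\setminus U) \geq 2$, and on $U$ the group $G$ acts freely with closed orbits, so (since $G$ is reductive and the characteristic of $k$ is zero) the quotient map $q\colon U \to U' := U/G$ is a principal $G$-bundle and $U'$ is an open subscheme of $\mathrm{Spec}(R^{G})$. One also checks, using that $R^{G}$ is normal (hence satisfies $S_{2}$) and that $q$ does not drop dimensions, that $\mathrm{codim}(\mathrm{Spec}(R^{G})\setminus U') \geq 2$ as well.

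Second, I would invoke descent along the torsor $q$: the functors $\mathcal{G} \mapsto q^{*}\mathcal{G}$ (with its canonical $G$-linearization) and $\mathcal{F} \mapsto (q_{*}\mathcal{F})^{G}$ are mutually inverse equivalences between quasi-coherent sheaves on $U'$ and $G$-equivariant quasi-coherent sheaves on $U$. Because $q$ is flat and \'etale-locally trivial, both functors preserve coherence and the property of being reflexive (equivalently, locally free in codimension $1$ and satisfying $S_{2}$), so they restrict to an equivalence $\mathrm{ref}(U') \simeq \mathrm{ref}(G,U)$; moreover $q^{*}$ is symmetric monoidal and taking $G$-invariants is exact in characteristic zero, so the equivalence is compatible with the tensor products $(-\otimes-)^{**}$. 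Separately, for a normal Noetherian domain $A$ and an open $V \subseteq \mathrm{Spec}(A)$ with complement of codimension $\geq 2$, restriction $M \mapsto \widetilde{M}|_{V}$ is an equivalence $\mathrm{ref}(A) \xrightarrow{\ \sim\ } \mathrm{ref}(V)$ with quasi-inverse $\mathcal{N} \mapsto \Gamma(V,\mathcal{N})$ (full faithfulness because $\mathrm{Hom}_{A}(M,N)=\Gamma(V,\mathcal{H}om_{A}(M,N))$ when $N$ satisfies $S_{2}$, and essential surjectivity because the pushforward of a coherent reflexive sheaf along $V\hookrightarrow \mathrm{Spec}(A)$ stays coherent and reflexive). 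Applying this with $(A,V)=(R,U)$ $G$-equivariantly and with $(A,V)=(R^{G},U')$, and splicing the three equivalences together, one obtains
\[
\mathrm{ref}(G,R)\ \xrightarrow{\ \sim\ }\ \mathrm{ref}(G,U)\ \xrightarrow{\ \sim\ }\ \mathrm{ref}(U')\ \xrightarrow{\ \sim\ }\ \mathrm{ref}(R^{G}).
\]

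Finally I would unwind the composite to identify it with the functors in the statement. Tracing $M\in\mathrm{ref}(G,R)$ through, one restricts $\widetilde{M}$ to $U$, descends to $(q_{*}\widetilde{M}|_{U})^{G}$, and takes global sections over $U'\subseteq\mathrm{Spec}(R^{G})$; since $\widetilde{M}$ satisfies $S_{2}$ this last step yields $\bigl(\Gamma(U,\widetilde{M})\bigr)^{G}=M^{G}$, so the composite is $M\mapsto M^{G}$. In the other direction $N\in\mathrm{ref}(R^{G})$ restricts to $\widetilde{N}|_{U'}$, pulls back to $q^{*}\widetilde{N}|_{U'}$, and its unique reflexive $G$-equivariant extension over $\mathrm{Spec}(R)$ is $(R\otimes_{R^{G}}N)^{**}$, because restriction to $U$ commutes with double duals and $q^{*}\widetilde{N}|_{U'}$ is already reflexive on $U$. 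Compatibility with the monoidal structures is inherited from the three intermediate equivalences. The main obstacle is the geometric input of the first step: verifying that the non-free, non-closed-orbit locus still has codimension $\geq 2$ after passing to $\mathrm{Spec}(R^{G})$, and that $q$ is genuinely a $G$-torsor over $U'$; once that is in place, the preservation of reflexivity under $q^{*}$, under $(q_{*}-)^{G}$ and under codimension-$2$ extension, together with the bookkeeping identifying the abstract composite with $(-)^{G}$ and $(R\otimes_{R^{G}}-)^{**}$, is routine if slightly delicate.
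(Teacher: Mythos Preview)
Your sketch is correct and follows the standard route; the paper itself gives no proof of this lemma but simply cites \cite[Lemma~3.3]{SV}, and what you have outlined is essentially the argument found there. The only substantive point you flag as an obstacle---that the complement of $U'$ in $\mathrm{Spec}(R^{G})$ still has codimension $\geq 2$---is indeed the one nontrivial step, and it is handled in \cite{SV} using that the non-stable locus is $G$-invariant together with the dimension comparison under the quotient map; once that is granted, your descent-plus-reflexive-extension argument goes through as written.
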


If $M^{T}_{R}(V_{1})$ and $M^{T}_{R}(V_{2})$
are both Cohen-Macaulay $R^{T}$-modules, then they are both reflexive $R^{T}$-modules.
Thus by the above lemma, we obtain that
\begin{align}
\mathrm{Hom}_{\mathrm{mod}
\,R^{T}}\big (M^{T}_{R}(V_{1}), M^{T}_{R}(V_{2})\big)
&\cong \mathrm{Hom}_{\mathrm{mod}(R, T)}
(R \otimes V_{1}, R \otimes V_{2}) \nonumber\\
&\cong   M^{T}_{R}
\big(\mathrm{Hom}_{\mathrm{mod} \,k}(V_{1}, V_{2})\big)\label{eq:RGmoduleiso}
\end{align}
in $\mathrm{mod}\, R^{T}$.

Moreover, observe that
$$
M^{T}_{R}(V) = (V \otimes R)^{T} \cong\big (D\big(D(V)\big) \otimes R\big)^{T}
\cong \mathrm{Hom}_{\mathrm{Rep}(T)}\big(D(V), R\big)
$$
for any finite-dimensional $V \in \mathrm{Rep}(T)$,
where $\mathrm{Rep}(T)$ is the category of representations of $T$.
If $V$ is an irreducible representation of $T$, then
the elements of $M^{T}_{R}(V)$ are in one to one correspondence with
the irreducible sub-representations of $T$ in $R$ that are isomorphic to
$D(V)$ in $\mathrm{Rep}(T)$.

\subsubsection{Gradings of $R^T$ and $\Lambda$}\label{subsect:grading}

In Proposition \ref{SVT}, both $R^{T}$ and $\Lambda$ have a natural
grading given as follows:
suppose that the weights of the action $T$ on $R$ are $(\chi_1, \chi_2, \cdots, \chi_n)$.
Since $R^{T}$ is generated by the elements in 
$\{x^{m_1}_1 x^{m_2}_2 \cdots x^{m_n}
\in R | \sum^{n}_{i = 1} m_i \chi_i =0 \}$
as a vector space, $R^T$ endows a grading induced from $R$; moreover, it is straightforward to see that
the multiplication on $R^T$ respects this grading.

Recall that
$$
M = M^T_{R}(U) = (U \otimes R)^{T} = \bigoplus_{\chi \in \mathcal{L}} (V_\chi \otimes R)^{T}.
$$
Since $(V_\chi \otimes R)^{T}$ is generated by the elements in
\begin{equation}\label{formula:grading}
\Big\{x^{m_1}_1 x^{m_2}_2 \cdots x^{m_n}_n
\in R\Big | \sum^{n}_{i = 1} m_i \chi_i = -\chi \Big\}
\end{equation}
which have non-negative grading, we endow $M$ with a grading induced from $R$,
which has no negatively graded component and is
compatible with the grading of $R^T$. Thus, $M$ is a {\it graded} $R^T$-module.
Since $\Lambda = \mathrm{End}_{\mathrm{mod} \,R^T}(M)$ and $M$ is a finitely generated graded $R^T$-module,
we have
$$
\mathrm{End}_{\mathrm{mod} \,R^T}(M) \cong \bigoplus_{i \in \mathbb{Z}}
\mathrm{Hom}_{\mathrm{grmod}\,R^T}\big(M, M(i)\big)
$$
as vector spaces. Let
$\big(\mathrm{End}_{\mathrm{mod} \,R^T}(M)\big)_{i}
:=\mathrm{Hom}_{\mathrm{grmod}\,R^T}\big(M, M(i)\big)$;
then $\mathrm{End}_{\mathrm{mod} \,R^T}(M)$ is a graded vector space,
which is also preserved under the product (composition).
Thus $\Lambda$ is a graded algebra.

Now, we describe the non-positive components of $\Lambda$.
First, for any $\alpha, \beta \in \mathcal{L}$,
by the construction of the NCCR of $R^T$,
both $M^{T}_{R}(V_\alpha)$ and $M^{T}_{R}(V_\beta)$ are reflexive
$R^T$-modules.
Thus we have the following isomorphism
\begin{eqnarray*}
M^{T}_{R}(V_{\beta - \alpha})
&\cong&
M^T_R(V_{-\alpha}\otimes V_{\beta})
 \cong M^{T}_{R}\big(D(V_\alpha) \otimes V_\beta\big)\\
& \cong& \mathrm{Hom}_{\mathrm{mod}(k, T)}(V_{\alpha}, V_{\beta} \otimes R)\\
& \cong& \mathrm{Hom}_{\mathrm{mod}(R, T)}(V_{\alpha} \otimes R, V_{\beta} \otimes R) \\
& \stackrel{(-)^{T}}\cong&
\mathrm{Hom}_{\mathrm{mod} \,R^T}\big(M^{T}_{R}(V_{\alpha}), M^{T}_{R}(V_{\beta})\big)
\quad\textup{(by Lemma \ref{Equi}).}
\end{eqnarray*}
Since each of the above equalities is grading preserving,
we obtain the following isomorphism
$$
M^{T}_{R}(V_{\beta - \alpha})
\stackrel{\cong}{\rightarrow}
\mathrm{Hom}_{\mathrm{mod} \,R^T}\big(M^{T}_{R}(V_{\alpha}), M^{T}_{R}(V_{\beta})\big)
$$
as graded $R^{T}$-modules.

Observing that
the left hand side of the above isomorphism has
no negatively graded component (see \eqref{formula:grading}),
we obtain that $(\mathrm{Hom}_{\mathrm{mod} \,R^T}\big(M^{T}_{R}(V_{\alpha})\big)_i = 0$
for $i < 0$, which implies that $\Lambda_{i} = 0$ for $i < 0$.

From \eqref{eq:RGmoduleiso} and the above argument,
we have a graded $R^T$-module isomorphism
$$
 \bigoplus_{\alpha, \beta \in \mathcal{L}} M^{T}_{R}(V_{\beta - \alpha})
 \stackrel{\cong}{\rightarrow} \bigoplus_{\alpha, \beta \in \mathcal{L}}
 \mathrm{Hom}_{\mathrm{mod} \,R^T}\big(M^{T}_{R}(V_{\alpha}), M^{T}_{R}(V_{\beta})\big)
 \cong \Lambda,
$$
which implies that $\Lambda_{0} =\big (\bigoplus_{\alpha,
\beta \in \mathcal{L}} M^{T}_{R}(V_{\beta - \alpha})\big)_0 \cong
\big((R^{T})^{\oplus \sharp \mathcal{L}}\big)_0 = k^{\oplus \sharp \mathcal{L}}$.

From the above we also see that the algebra morphism
$p : \Lambda \rightarrow k^{\oplus \sharp \mathcal{L}}$
in \eqref{defofp}
gives the
augmentation map of $\Lambda$.

\subsubsection{The AS-regular algebra structure}

We are now ready to show the NCCR $\Lambda$
is an AS-regular algebra. Let us start with the following.

\begin{proposition}\label{ToGo}
Under the assumptions of Proposition \ref{SVT}, we have the following:
\begin{enumerate}
\item[$(1)$]
$R^{T}$ is a Noetherian graded Gorenstein algebra, and moreover,

\item[$(2)$] it is
of dimension $n-1$ with Gorenstein parameter $n$.
\end{enumerate}
\end{proposition}

\begin{proof}

(1) It is straightforward to see that $R^T$ is Noetherian,
and is also graded
with the grading induced from $R$.
Moreover, under the two
conditions of Proposition \ref{SVT} (the action of $T$
being generic and unimodular),
$R^T$ is a Gorenstein algebra.

Now let $R^{\bullet}$ be the graded projective resolution of $k$ in the
derived category $D\big(\mathrm{grmod}\,R^{T}\big)$.
Write $R^{\bullet}$ as $\widetilde{R^{\bullet}}$ when we view it as
the projective resolution of $k$ in $D(R^{T})$. We have
\begin{align*}
k &\cong\mathrm{Ext}_{\mathrm{mod} \,R^T}^{n-1}(k, R^T)
\cong
\mathrm{Ext}_{\mathrm{mod} \,R^T}^{n-1}(\widetilde{R^{\bullet}}, R^T)
\cong
\underline{\mathrm{Ext}}_{\mathrm{mod} \,R^T}^{n-1}(R^{\bullet}, R^T) \\
&
\cong
\underline{\mathrm{Ext}}_{\mathrm{mod} \,R^T}^{n-1}(k, R^T)
\cong
\bigoplus_{i}
\mathrm{Ext}_{\mathrm{grmod}\,R^T}^{n-1}\big(k, R^T(i)\big),
\end{align*}
where the first equality comes from the Gorensteinness of $R^T$.
Since $k$ is a one-dimensional vector space,
there is a number $r \in \mathbb{Z}$ such that
$$
\mathrm{Ext}_{\mathrm{grmod}\,R^T}^{n-1}\big(k, R^T(-r)\big)= k, 
$$
and
$$
\mathrm{Ext}_{\mathrm{grmod}\,R^T}^{n-1}\big(k, R^T(i)\big) = 0,
$$
for any $i \neq -r$.

In the meantime, since $R^{T}$ is a Noetherian Gorenstein algebra,
the injective dimension of $R^T$ in $\mathrm{Mod}\,R^T$ is finite, which
implies the finiteness of the injective dimension of $R^T$ in $\mathrm{Grmod}\,R^T$.
Thus by definition, $R^T$ is a
Noetherian graded Gorenstein algebra with Gorenstein parameter $r$.

(2) First, it is obvious that
$$\mathrm{dim}(R^{T}) = \mathrm{dim}\big(\mathrm{Spec}(R^{T})\big)
= \mathrm{dim}\big(\mathrm{Spec}(R)\big) - \mathrm{dim}(T) =  n-1.$$
Now, Orlov \cite[Theorems 16, 24 \& Proposition 28]{DR2} shows that
the Gorenstein parameter of $R^{T}$ is equal to the number $r \in \mathbb{Z}$ such that
$\mathcal{O}_{X}(-r) \cong \omega_{X}$,
where $X$ is the quotient stack $\big[\big(\mathrm{Spec}(R^{T})
\setminus \{ 0\}\big) /k^{\times}\big]$
and the action of $k^{\times}$ on $\mathrm{Spec}(R^{T})$ is given by the grading on $R^{T}$
(see also the argument after the proof of \cite[Theorem 25]{DR2}).
Thus to prove the proposition, it suffices to prove that $\mathcal{O}_{X}(-n) \cong \omega_{X}$,
which is equivalent to showing that the weight of the $k^{\times}$-action on
the sections of the canonical line bundle on $X$ is $n$. Observe that the latter is
also equal to the degree of the section of the canonical line bundle
on $Y := \mathrm{Spec}(R^T)$, where the degree is induced from the grading of $R^T$.

On the other hand, observe that the canonical
line bundle $\omega_Z$ on $Z := \mathrm{Spec}(R) = k^n$ has a section
\begin{equation}\label{eq:formulafors}
s:=dx_{1} dx_{2} \cdots dx_{n} : Z \longrightarrow \omega_Z.
\end{equation}
Since the representation $k^n$ of $T$
is unimodular, the induced action
of $T$ on the canonical line bundle on $\mathrm{Spec}(R)$ is trivial.
This implies that $s$ induces a section $\tilde{s}$ of the canonical
line bundle $\omega_Y \cong \rho_{\ast} \omega_{Z}$ on $\mathrm{Spec}(R^T)$
such that the following diagram$$
\xymatrixcolsep{4pc}
\xymatrix{
Z \ar[r]^{s} \ar[d]_{\rho} & \omega_Z \ar[d]^{\rho'}  \\
Y \ar[r]^{\tilde{s}}
& \omega_{Y}
}
$$
commutes,
where $\rho: Z \rightarrow Y$ is the quotient morphism
and $\rho'$ is the lift of $\rho$ to the canonical line bundles.

Observe that
the degree of $\tilde{s}$ is equal to the
degree of $s$ and that
$s$ has degree $n$ by \eqref{eq:formulafors}; we obtain that
$\tilde s$ also has degree $n$.
\end{proof}

\begin{lemma}\label{RTequi}
Under the assumptions of Proposition \ref{SVT},
for any
$\alpha_{r} \in \mathcal{L} $, let $e_{r} \in \Lambda$ be
the indecomposable
idempotent corresponding to $M^{T}_{R}(V_{\alpha_{r}})$, which is  a direct summand of
$ \bigoplus_{\chi \in \mathcal{L}} M^{T}_{R}(V_{\chi}) \cong M^{T}_{R}(U)$. Then
$$
e_{r} \Lambda e_{r} \cong R^{T}
$$
as graded algebras.
\end{lemma}

\begin{proof}
First, we  have that $e_r \Lambda \otimes_{\Lambda} \Lambda e_r \cong e_{r} \Lambda e_{r}$
as algebras, where the multiplication of
$e_r \Lambda \otimes_{\Lambda} \Lambda e_r$ is
given as
follows: for any $a_1 \otimes_{\Lambda} b_1 , a_2 \otimes_{\Lambda}
b_2 \in e_r \Lambda \otimes_{\Lambda} \Lambda e_r$,
$$
(a_1 \otimes_{\Lambda} b_1)(a_2 \otimes_{\Lambda} b_2) : = a_1 m_{\Lambda}(b_1, a_2) \otimes_{\Lambda} b_2,
$$
where $m_\Lambda$ is the multiplication of $\Lambda$.

Since $e_r \in \Lambda$ presents the composition of
the canonical projection (denoted by $\pi_r$) from $M$ to $M^{T}_{R}(V_{\alpha_{r}})$
and the canonical inclusion (denoted by $l_r$) from $M^{T}_{R}(V_{\alpha_{r}})$ to $M$:
$$
M \stackrel{\pi_r}\twoheadrightarrow M^{T}_{R}(V_{\alpha_{r}}) \stackrel{l_r}\hookrightarrow M,
$$
we have that the graded $R^T$-module $e_{r} \Lambda$ is isomorphic to
$\mathrm{Hom}_{\mathrm{mod} \,R^T}\big(M^{T}_{R}(V_{\alpha_{r}}), M\big)$. Analogously,
the graded $R^T$-module $\Lambda e_r$ is isomorphic
to $\mathrm{Hom}_{\mathrm{mod} \,R^T}\big(M, M^{T}_{R}(V_{\alpha_{r}})\big)$
as graded $R^T$-modules. Thus, we obtain a graded algebra isomorphism
\begin{equation}\label{eq:iso111}
e_r \Lambda e_r \cong \mathrm{Hom}_{\mathrm{mod} \,R^T}\big(M^{T}_{R}(V_{\alpha_{r}})
, M\big) \otimes_{\Lambda} \mathrm{Hom}_{\mathrm{mod} \,R^T}\big(M, M^{T}_{R}(V_{\alpha_{r}})\big),
\end{equation}
where the multiplication on the right hand side is given by the composition of morphisms.

In the meantime, there is a natural algebra morphism
\begin{equation}\label{eq:iso222}
\begin{split}
\begin{array}{cr}
\vartheta_r: &\mathrm{Hom}_{\mathrm{mod} \,R^T}\big(M^{T}_{R}(V_{\alpha_{r}}), M\big)
\otimes_{\Lambda} \mathrm{Hom}_{\mathrm{mod} \,R^T}
\big(M, M^{T}_{R}(V_{\alpha_{r}})\big)\\
&\rightarrow \mathrm{Hom}_{\mathrm{mod} \,R^T}\big(M^{T}_{R}(V_{\alpha_{r}}), 
M^{T}_{R}(V_{\alpha_{r}})\big),
\end{array}
\end{split}
\end{equation}
which is given by composition of morphisms.
We show $\vartheta_r$ is an isomorphism.
On the one hand, for any
$f \in \mathrm{Hom}_{\mathrm{mod} \,R^T}\big(M^{T}_{R}(V_{\alpha_{r}}), M^{T}_{R}(V_{\alpha_{r}})\big)$,
we have $\vartheta_{r}\big((l_r \circ f) \otimes_{\Lambda} \pi_r\big) = f$.
Thus $\vartheta_r$ is surjective.
On the other hand, suppose we have
$\vartheta_{r}\big(\sum_{j}(a_j \otimes_{\Lambda} b_j)\big)
= \sum_{j}b_j \circ a_j = 0$. Since
\begin{align*}
\sum_{j}a_j \otimes_{\Lambda} b_j
&= \sum_{j}a_j \otimes_{\Lambda} (\pi_r \circ l_r \circ b_j)\\
&= \sum_{j}\big((l_r \circ  b_j) \circ a_j\big) \otimes_{\Lambda} \pi_r\\
&= \sum_{j}\big(l_r \circ  (b_j \circ a_r)\big) \otimes_{\Lambda} \pi_r,
\end{align*}
we have that $\sum_{j}a_j \otimes_{\Lambda} b_j= 0$ and hence $\vartheta_r$ is injective.
We thus get $\vartheta_r$ is an isomorphism,
and combining \eqref{eq:iso111} and \eqref{eq:iso222}, we obtain 
\begin{equation}\label{eq:iso333}
e_r \Lambda e_r \cong
\mathrm{Hom}_{\mathrm{mod} \,R^T}\big(M^{T}_{R}(V_{\alpha_{r}}), M^{T}_{R}(V_{\alpha_{r}})\big).
\end{equation}

Now, since $V_{\alpha_{r}}$ is a
one dimensional representation of $T$ with weight $\alpha_{r}$
for any $\alpha_{r} \in \mathcal{L}$,
we have $D(V_{\alpha_{r}}) = \mathrm{Hom}_{k}(V_{\alpha_{r}}, k)= V_{- \alpha_{r}}$
and $V_{- \alpha_{r}} \otimes V_{\alpha_{r}}\cong V_0\cong k$, where
$k$ is viewed as the trivial representation of $T$,
and therefore
$$
M^T_R\big(D(V_{\alpha^r})\otimes V_{\alpha^r}\big)
\cong M^T_R(V_0)\cong (k\otimes R)^T=R^T.$$
Thus by \eqref{eq:RGmoduleiso} we have the following graded algebra isomorphism
\begin{equation}\label{eq:iso444}
R^T = M^{T}_{R}\big(D(V_{\alpha_{r}}) \otimes V_{\alpha_{r}} \big)
\stackrel{\cong}{\rightarrow} \mathrm{Hom}_{\mathrm{mod} \,R^T}
\big(M^{T}_{R}(V_{\alpha_{r}}), M^{T}_{R}(V_{\alpha_{r}})\big).
\end{equation}
Combining \eqref{eq:iso333} and \eqref{eq:iso444},
we get the graded algebra isomorphism $e_r \Lambda e_r \cong R^T$.
\end{proof}


We next show the main result of this subsection.

\begin{theorem}\label{ToAS}
Under the assumptions of Proposition \ref{SVT},
let $\Lambda$ be the NCCR of $R^T$. Then
$\Lambda$ is an AS-regular algebra of dimension $n-1$ with Gorenstein parameter $n$.
\end{theorem}

\begin{proof}
Recall that from \S\ref{subsect:grading},
$\Lambda$ has a grading satisfying Proposition \ref{Propo0}
and the morphism $p : \Lambda \rightarrow k^{\oplus \sharp \mathcal{L}}$ is the
augmentation map.
By Theorem \ref{Torus} and Propositions \ref{Propo0} and \ref{ToGo},
we only need to prove
$$
a_{1} = a_2 = \cdots= a_{r} = n,
$$
where $\{a_{1}, a_{2}, \cdots, a_{r}\}$ is given in Proposition \ref{Propo0}.

To this end, let $\Lambda$ be the graded algebra $A$ in Proposition \ref{Propo0},
and $Q_{i}^{\bullet}$ be the projective resolution of $\Lambda_{0, i} \cong e_{i} \Lambda_{0}$
of a graded $\Lambda$-module, where $e_{i}$ is the idempotent corresponding to $\Lambda_{0, i}$.
From the proof of Proposition \ref{Propo0}, we see
$$
\mathrm{Ext}_{\mathrm{mod} \,\Lambda}^{n-1}( \Lambda_{0, i},  \Lambda) \cong  \Lambda_{0, i}(a_{i}).
$$
Therefore we have isomorphisms
\begin{align*}
k(a_{i})& \cong \mathrm{Ext}_{\mathrm{mod} \,\Lambda}^{n-1}( \Lambda_{0, i},  \Lambda)\\
&  \cong  \mathrm{Hom}^{n-1}_{K^{b}(\mathrm{grmod}\, \Lambda)}(Q_{i}^{\bullet}, \Lambda)\\
& \cong  \mathrm{Hom}^{n-1}_{K^{b}(\mathrm{grmod}\, \Lambda )}(Q_{i}^{\bullet}, e_{i}\Lambda)\\
& \cong \mathrm{Hom}^{n-1}_{K^{b}(\mathrm{grmod}\, \Lambda )}\big(Q_{i}^{\bullet},
\mathrm{Hom}_{e_{i} \Lambda e_{i}} (\Lambda e_{i}, e_{i}\Lambda e_{i})\big)\\
& \cong \mathrm{Hom}^{n-1}_{K^{b}(\mathrm{grmod}\, e_{i} \Lambda e_{i})}(Q_{i}^{\bullet}
\otimes_{\Lambda} \Lambda e_{i}, e_{i} \Lambda e_{i})\\
& \cong \mathrm{Hom}^{n-1}_{K^{b}(\mathrm{grmod}\, e_{i} \Lambda e_{i})}(Q_{i}^{\bullet}
e_{i}, e_{i} \Lambda e_{i}).
\end{align*}
Moreover, since $(Q_{i}^{\bullet})^{j} \in \mathrm{add}(\Lambda)$ for any $j$,
$(Q_{i}^{\bullet})^{j}e_{i}$ is a Cohen-Macaulay $R^{T}$-module.
Observe that when $R^{T}$ is a Gorenstein algebra, $M$ is a Cohen-Macaulay $R^{T}$-module
if and only if
\begin{equation}\label{eq:HomMS}
\mathrm{Hom}^{i}_{D^{b}(R^T)}(M, R^T) = 0,
\end{equation}
for any $i \neq 0$.
From the fact that $R^{T}$ is a Noetherian graded Gorenstein algebra of dimension
$n-1$ with Gorenstein parameter $n$, we get an isomorphism:
\begin{eqnarray*}
\mathrm{Hom}^{n-1}_{K^{b}(\mathrm{grmod}\, e_{i} \Lambda e_{i})}(Q_{i}^{\bullet} e_{i}, e_{i} \Lambda e_{i})
 &\stackrel{\eqref{eq:HomMS}}\cong&
\mathrm{Hom}^{n-1}_{D^{b}(\mathrm{grmod}\, e_{i} \Lambda e_{i})}(Q_{i}^{\bullet} e_{i}, e_{i} \Lambda e_{i})\\
& \cong &\mathrm{Hom}^{n-1}_{D^{b}(\mathrm{grmod}\,R^{T})}(k, e_{i} \Lambda e_{i}) \\
& \cong& k(n)
\end{eqnarray*}
in $\mathrm{grmod}\,k$,
where the second isomorphism holds since we have
\begin{eqnarray*}
 Q_{i}^{\bullet} e_{i} \cong
 Q_{i}^{\bullet} \otimes^{L}_{\Lambda} \Lambda e_{i}
 \cong e_{i} \Lambda_{0} \otimes^{L}_{\Lambda} \Lambda e_{i}
 \cong e_{i} \Lambda_{0}
 \cong k
\end{eqnarray*}
 in $D^{b}(\mathrm{grmod}\,e_{i} \Lambda e_{i})$.
From these two isomorphisms we get that $a_{i} = n$ for $i=1,\cdots, r$.
\end{proof}

\subsection{Graded isolated singularity}

In this subsection we give a characterization of $R^T$ as a graded
isolated singularity.
Let us start with the following definition, which is
introduced by Mori and Ueyama in \cite{MU2}:

\begin{definition}[{\cite[Definition 1.1]{MU2}}]\label{Gra-iso}
A right Noetherian graded algebra $A$ is called a {\it graded isolated singularity} if
$\mathrm{gldim}(\mathrm{tails}\, A) < \infty$, where
$$
\mathrm{gldim}(\mathrm{tails}\, A) := \mathrm{sup}
\left\{i\in\mathbb Z \Big| \mathrm{Hom}^{i}_{D^b(\mathrm{tails}\, A)}(M, N) \neq 0, M, N \in \mathrm{tails}\, A\right\}.
$$
\end{definition}

In op. cit. the authors also obtained the following result
(see\cite[Theorem 3.10]{MU2}):
suppose a finite group $\Gamma$ acts on an AS-regular algebra $S$ of dimension $d \geq 2$,
which has homological determinant 1,
and let $\bm e := \frac{1}{|\Gamma|} \sum_{\gamma \in\Gamma} (1,\gamma)$;
then $(S \ast \Gamma) /\langle\bm e\rangle$ is finite dimensional over $k$
if and only if $S^{\Gamma}$ is a graded isolated singularity and
$$
\psi: S \ast \Gamma \rightarrow
\mathrm{End}_{\mathrm{mod} \,S^{\Gamma}}(S),\, (s, \gamma)
\mapsto \{t \mapsto s\gamma(t)\}
$$
is an isomorphism of graded algebras.

\begin{notation}\label{def:e}
From now on, we let $e$ be the idempotent of $\Lambda$
which corresponds to the summand $M^{G}_{R}(V_l)$, where $l$
is the minimal number in $\mathcal{L}$ given in Proposition \ref{SVT}.
\end{notation}

The following theorem generalizes the above result of Mori and Ueyama
to the case when $\Gamma$ is the one-dimensional torus $k^\times$.

\begin{theorem}\label{Sing}
Let $T$ be $k^\times$ which acts 
on $R = k[V]= k[x_{1}, x_{2}, \cdots, x_{n}]$ with weights 
$\chi = (\chi_{1}, \chi_{2}, \cdots, \chi_{n})$.
Suppose the action of $T$ is generic and unimodular.
Then the following are equivalent:
\begin{enumerate}
\item[$(1)$] $\mathrm{gcd}(\chi_i,\chi_j)=1$ whenever
$\chi_i\chi_j<0$ for all $1\le i,j\le n$;

\item[$(2)$] the functor $( - )e: \mathrm{tails}\,
\Lambda \rightarrow \mathrm{tails}\, R^{T}$ is an equivalence functor
with inverse $( - ) \otimes_{e \Lambda e} e \Lambda$;

\item[$(3)$] $R^{T}$ is a graded isolated singularity;

\item[$(4)$] $\mathrm{Spec}(R^{T})$ has a unique isolated singularity at the origin.
\end{enumerate}
\end{theorem}

Before proving the theorem, let us first recall the following
lemma of Mori and Ueyama:

\begin{lemma}[{\cite[Lemma 3.17]{MU}, \cite[Lemma 2.7]{MU2}}]\label{Idemp}
Let $A$ be a Noetherian graded algebra and $e$ be an idempotent of $A$
such that $eAe$ is also a Noetherian graded algebra and that $Ae$ and $eA
\in \mathrm{grmod}\,eAe $
are finitely generated left $eAe$-modules.
Then $A/\langle e\rangle \in \mathrm{tors}\, A$
if and only if
$$
- \otimes_{A} Ae: \mathrm{tails}\, A \rightarrow \mathrm{tails}\,eAe
$$
is an equivalence functor.
\end{lemma}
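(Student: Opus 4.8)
The plan is to recognise $-\otimes^{L}_{A}Ae$ as the composite of a Serre quotient functor with an equivalence, and then to reduce the statement to the ``localization of a localization'' principle for Serre quotients. Note first that $-\otimes^{L}_{A}Ae=-\otimes_{A}Ae=:(-)e$, since $Ae$ is a direct summand of $A$ as a left $A$-module, hence projective and flat.

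First I would analyse the functor $(-)e\colon\mathrm{grmod}\,A\to\mathrm{grmod}\,eAe$. It is exact (again because $Ae$ is a left-$A$-module summand of $A$) and lands in finitely generated $eAe$-modules by the hypothesis $Ae\in\mathrm{grmod}\,eAe$. It admits the left adjoint $F:=-\otimes_{eAe}eA$, which lands in $\mathrm{grmod}\,A$ because $eA$ is a finitely generated right $A$-module; the adjunction $F\dashv(-)e$ is the hom-tensor identity $\mathrm{Hom}_{A}(N\otimes_{eAe}eA,M)\cong\mathrm{Hom}_{eAe}(N,Me)$. The unit $\eta_{N}\colon N\to(N\otimes_{eAe}eA)e=N\otimes_{eAe}eAe\cong N$ is an isomorphism, so $F$ is fully faithful and $(-)e$ is essentially surjective. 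Invoking the standard fact that an exact, essentially surjective functor possessing a fully faithful adjoint is, up to equivalence, the quotient by its kernel, I obtain an equivalence
\[
\mathrm{grmod}\,A/\mathcal{S}_{0}\;\xrightarrow{\ \sim\ }\;\mathrm{grmod}\,eAe,\qquad \mathcal{S}_{0}:=\{\,M\in\mathrm{grmod}\,A\mid Me=0\,\},
\]
induced by $(-)e$, where $\mathcal{S}_{0}$ is easily checked to be a Serre subcategory.

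For the implication ``$A/\langle e\rangle\in\mathrm{tors}\,A\Rightarrow(-)e$ induces an equivalence on tails'', I would argue as follows. Any $M\in\mathcal{S}_{0}$ is annihilated by $AeA$ (for $m\in M$, $a,b\in A$ one has $m\cdot aeb=((ma)e)\cdot b=0$), hence is a finitely generated module over $A/\langle e\rangle=A/AeA$; since the latter is finite-dimensional by hypothesis, $M$ is finite-dimensional, so $\mathcal{S}_{0}\subseteq\mathrm{tors}\,A$. The third-isomorphism principle for Serre quotients then gives
\[
\mathrm{tail}\,A=\mathrm{grmod}\,A/\mathrm{tors}\,A\;\simeq\;(\mathrm{grmod}\,A/\mathcal{S}_{0})\big/\overline{\mathrm{tors}\,A}\;\simeq\;\mathrm{grmod}\,eAe\big/\overline{\mathrm{tors}\,A},
\]
the equivalences being induced by $(-)e$. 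It remains to identify $\overline{\mathrm{tors}\,A}$ with $\mathrm{tors}\,eAe$: one inclusion is immediate since $(-)e$ preserves finite-dimensionality, and for the other a finite-dimensional $N\in\mathrm{grmod}\,eAe$ equals $(N\otimes_{eAe}eA)e$ with $N\otimes_{eAe}eA$ finite-dimensional — and this is precisely the point at which the hypothesis that $eA$ is a finitely generated left $eAe$-module is used. Hence $(-)e$ induces an equivalence $\mathrm{tail}\,A\xrightarrow{\sim}\mathrm{tail}\,eAe$.

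For the converse, $A/\langle e\rangle$ satisfies $(A/\langle e\rangle)e=0$ (as $Ae\subseteq AeA$), so it lies in $\mathcal{S}_{0}$ and $(-)e$ sends it to the zero module; if the induced functor $\mathrm{tail}\,A\to\mathrm{tail}\,eAe$ is an equivalence then it reflects zero objects, so $A/\langle e\rangle$ is zero in $\mathrm{tail}\,A$, i.e. $A/\langle e\rangle\in\mathrm{tors}\,A$. The step I expect to demand the most care is the bookkeeping of finite generation and of the torsion (Serre) subcategories — verifying that $(-)e$ and $F$ genuinely restrict as asserted, and that finitely generated $A/\langle e\rangle$-modules are torsion in the grading sense in force here — which is exactly what consumes the three hypotheses; by contrast the categorical backbone (a Serre quotient together with the third-isomorphism principle, with $\mathrm{coker}$ of the counit of $F\dashv(-)e$ at $A$ being $A/\langle e\rangle$) is routine.
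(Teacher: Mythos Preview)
The paper does not supply its own proof of this lemma; it is quoted verbatim from Mori--Ueyama \cite[Lemma~3.17]{MU} and \cite[Lemma~2.7]{MU2} and used as a black box. So there is nothing in the paper to compare against, and your proposal stands on its own.

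Your argument is correct and is essentially the standard one: recognise $(-)e$ as an exact functor with fully faithful left adjoint $-\otimes_{eAe}eA$, invoke Gabriel's characterisation to identify $\mathrm{grmod}\,eAe$ with the Serre quotient $\mathrm{grmod}\,A/\mathcal{S}_0$ where $\mathcal{S}_0=\{M:Me=0\}$, and then pass to tails via the third-isomorphism principle once you know $\mathcal{S}_0\subseteq\mathrm{tors}\,A$. The converse via reflection of zero objects is clean. This is in fact the shape of Mori--Ueyama's original argument.

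One small point of bookkeeping: you write ``since the latter is finite-dimensional by hypothesis'', but the hypothesis is only $A/\langle e\rangle\in\mathrm{tors}\,A$. What this actually gives is that $\bar 1\in A/\langle e\rangle$ is annihilated by some $A_{\ge n}$, hence $(A/\langle e\rangle)_{\ge n}=0$; finite-dimensionality then follows only under the (standard, and in this paper always satisfied) assumption that $A$ is locally finite. Alternatively you can bypass finite-dimensionality entirely: any finitely generated graded $A/\langle e\rangle$-module is bounded above once $(A/\langle e\rangle)_{\ge n}=0$, hence torsion. Either way the step goes through, but it is worth stating precisely which route you take.
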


\begin{proof}[Proof of Theorem \ref{Sing}]
From $(2)$ to $(1)$:
Without loss of
generality we assume
$\mathrm{gcd}(\chi_{1}, \chi_{2}) = h > 1$
and $\chi_{1} > 0 > \chi_{2}$.
From the construction of $\Lambda = \mathrm{End}_{R^{T}}
(\bigoplus_{\lambda  \in \mathcal{L}} M^{T}_{R}(V_{\lambda}))$
in Proposition \ref{SVT}, we may choose an indecomposable element $e' \in \Lambda_{0}$
associated to the summand $M^{T}_{R}(V_{l+1})$, where $l$ is the minimal number in $\mathcal{L}$.

Observe that we have
$l +1 \in \mathcal{L}$. In fact, if $l+1$ were not in $\mathcal{L}$, then since
$l$ is the minimal number in $\mathcal{L}$, $\mathcal{L}$ has only one point $l$.
Now, we have $\Lambda =\mathrm{End}_{R^T}
\big(M^{T}_{R}(V_l)\big) = R^T$ by Lemma \ref{Equi}. Since $\Lambda$ is
homologically smooth,
$\mathrm{Spec}(R^{T})$ is a smooth scheme. Thus the action of $T$ on $V$ is trivial,
which is a contradiction.

Since $x^{- \chi_{2}}_{1} x^{\chi_{1}}_{2} \in R^{T} \subset \Lambda$,
we have that $e' (x^{- \chi_{2}}_{1} x^{\chi_{1}}_{2})^{N} \in e' R^{T} \subset \Lambda$, for
any $N > 0$.
Now, fix $N>0$. Since
$e' (x^{- \chi_{2}}_{1} x^{\chi_{1}}_{2})^{N} = e' (x^{- \chi_{2}}_{1} x^{\chi_{1}}_{2})^{N} e'$,
we have that
$
e' (x^{- \chi_{2}}_{1} x^{\chi_{1}}_{2})^{N} \in \Lambda e \Lambda
$
if and only if
there are two elements $f_{1} \in e' \Lambda e \cong (R \otimes V_{-1})^{T}
$ and $f_{2} \in e \Lambda e' \cong (R \otimes V_{1})^{T}$ such that
$$
f_{1}f_{2} = e' (x^{- \chi_{2}}_{1} x^{\chi_{1}}_{2})^{N}.
$$
On the other hand
$$
f_{1}f_{2} = e' (x^{- \chi_{2}}_{1} x^{\chi_{1}}_{2})^{N}
$$
if and only if there are two numbers $\alpha, \beta \in \mathbb{Z}$ such that
$- \chi_{2}N \geq \alpha \geq 0$, $\chi_{1}N \geq \beta \geq 0$ and
$f_{2} = e x^{\alpha}_{1} x^{\beta}_{2} e' $, which is equivalent to that
$$
\chi_{1} \alpha + \chi_{2} \beta = -1
$$
since $k \cdot f_{2} \cong V_{-1}$.
However, we have assumed that $\mathrm{gcd}(\chi_{1}, \chi_{2}) = h > 1$, which is
a contradiction.
Thus we can choose infinity many $N$ making
$$
e' (x^{- \chi_{2}}_{1} x^{\chi_{1}}_{2})^{N} \notin \Lambda e \Lambda.
$$
It contradicts to the fact that $\Lambda/\Lambda e \Lambda$
is a finite dimensional vector space by Lemma \ref{Idemp}.

From $(3)$ to $(2)$:
since $R^{T}$ is a Noetherian graded Gorenstein algebra,
the result follows from \cite[Theorem 2.5]{MU2}, which
says that,
if $ A $ is a Noetherian graded Gorenstein isolated singularity of
dimension $d \geq 2$ and $N \in \mathrm{Ob}(\mathrm{CM}^{\mathbb{Z}}(A))$ such that
$N$ contains $A$ as a direct summand, then
$$
\mathrm{\underline{Hom}}_{\mathrm{tails} \,A}(N, -): \mathrm{tails}( \,A) 
\rightarrow\mathrm{tails} \,
\big(\mathrm{\underline{End}}_{\mathrm{mod} \,A}(N)\big)
$$
is an equivalence functor.

Now since
$e \Lambda e \in \mathrm{add}(\Lambda e )$,
let $A = R^T =  e \Lambda e$ and $N = \Lambda e(=M)$
(see Lemma \ref{RTequi}), then the above equivalence
holds in this case.
Since
$$
\mathrm{\underline{Hom}}_{\mathrm{tails} \,e \Lambda e}(\Lambda e, -)
\otimes_{\Lambda} \Lambda e \cong
\mathrm{\underline{Hom}}_{\mathrm{tails} \,e \Lambda e}( e \Lambda e, -)
$$
is the identity functor on $\mathrm{tails}( e \Lambda e)$,
we have that
the functor $\mathrm{\underline{Hom}}_{e \Lambda e}(\Lambda e, -)$  is inverse to
the functor $( - )e := ( - ) \otimes_{\Lambda} \Lambda e$. Hence,
$$
( - )e : \mathrm{tails}(\Lambda) \rightarrow \mathrm{tails}( e \Lambda e)
$$
is an equivalence functor.
Moreover, $\big(( - ) \otimes_{e \Lambda e} e \Lambda\big)e$ is also an identity functor
on $\mathrm{tails}( e \Lambda e)$. Since the inverse functor is unique, we then have
$$
( - ) \otimes_{e \Lambda e} e \Lambda \cong \mathrm{\underline{Hom}}_{e \Lambda e}(\Lambda e, -)
$$
on $\mathrm{tails}( e \Lambda e)$.

From $(4)$ to $(3)$:
suppose $\mathrm{Spec}(R^{T})$ has a unique isolated singularity at the origin.
Then $\mathrm{Spec}(R^{T}) \setminus \{ 0 \}$ is smooth,
and therefore the abelian
category $\mathrm{Coh}\big(\mathrm{Spec}(R^{T}) \setminus \{ 0 \} \big)$ of
coherent sheaves on $\mathrm{Spec}(R^{T}) \setminus \{ 0 \}$ has finite global dimension.
This implies that the abelian category
$\mathrm{Coh}\big(\big[\big(\mathrm{Spec}(R^{T}) \setminus \{ 0 \}\big)/k^{\times}\big]\big)$
of coherent sheaves of the quotient stack
$\big[\big(\mathrm{Spec}(R^{T}) \setminus \{ 0 \}\big)/k^{\times}\big]$
also has finite global dimension, by \cite[Lemma 3.1 (4)]{SV}.
Thus by the fact that
$\mathrm{Coh}\big(\big[\big(
\mathrm{Spec}(R^{T}) \setminus \{ 0 \}\big) /k^{\times}\big]\big)
\cong \mathrm{tails}\, R^{T}$
(see \cite[Proposition 28]{DR2}),
we have $\mathrm{gldim}(\mathrm{tails}\, R^{T}) < \infty$.

From $(1)$ to $(4)$:
recall that Luna's slice theorem in \cite{Lu} (see also the proof of \cite[Lemma 3.8]{SV}) says that
for any closed point
$p \neq \{0\} \in \mathrm{Spec}(R^{T})$, suppose it is the image of $\tilde p \in{\mathrm{Spec}(R)}^{s}$,
then there are two \'etale morphisms
$$
S//T_{\tilde p} \rightarrow N_{\tilde p}//T_{\tilde p},
$$
and
$$
S//T_{\tilde p} \rightarrow {\mathrm{Spec}(R)}//T \cong {\mathrm{Spec}(R^{T})},
$$
where $T_{\tilde p}$ is the stabilizer of $\tilde p$, $N_{\tilde p}$ is a
$T_{\tilde p}$-invariant complement to the inclusion of
$T_{\tilde p}$-representations ${T}_{\tilde p}(T)/{T}_{\tilde p}(T_{\tilde p})
\subseteq \mathrm{T}_{\tilde p}\big(\mathrm{Spec}(R)\big)$,
and $S$ is the affine $T_{\tilde p}$-invariant slice to the $T$-orbit of $\tilde p$.

Thus by Luna's slice theorem, we have
that $p$ is non-singular in $\mathrm{Spec}(R^{T})$ if and only if the origin is
non-singular in $N_{\tilde p}//T_{\tilde p}$, which is equivalent to that $T_{\tilde p}$ is trivial.
Since $\tilde p = (x_{1}, x_{2}, \cdots, x_{n}) \in \mathrm{Spec}(R)^{s}$, there are
$x_{i}, x_{j} \in\{x_{1}, x_{2}, \cdots, x_{n}\}$ such that $\chi_{i}\chi_{j} < 0$.
Therefore $\mathrm{gcd}(\chi_{i}, \chi_{j}) = 1$ by assumption.
Thus for any $t \in T_{\tilde p}$, we have $t^{\chi_{i}} = 1$ and $t^{\chi_{j}} = 1$,
from which we obtain that $t = 1 \in T$. Therefore $T_{\tilde p}$ is trivial.
\end{proof}

As a corollary to Theorem \ref{Sing},
we have the following proposition, 
which describes the singularity of a generic and unimodular
$k^\times$-action in terms
of its weights.

\begin{proposition}\label{prop:equivalencecond}
Suppose $T=k^\times$ acts on $V$, with the induced action on $R:=k[V]$.
The the following two conditions are equivalent:
\begin{enumerate}
\item[$(1)$] the action of $T$ on $V$ is generic and unimodular, and $R^T$ is
an isolated singularity;

\item[$(2)$] the weights $\chi = (\chi_{1}, \chi_{2}, \cdots, \chi_{n}) \in \mathbb{Z}^{n}$
of the action of $T$ on
$R$ has at least two positive and two negative components such that
$\sum^{n}_{i = 1}\chi_{i} = 0$ and
$\mathrm{gcd}(\chi_i,\chi_j)=1$ whenever $\chi_i\chi_j<0$ for all $1\le i,j\le n$.
\end{enumerate}
\end{proposition}

\begin{proof}
From (1) to (2):
Follows from a combination of Lemma \ref{equivalent}
and Theorem \ref{Sing}.

From (2) to (1):
Thanks to Theorem \ref{Sing}, we only need to show that
the action of $T$ on $V$ is generic and unimodular.

First, choose a closed point $x \in \mathrm{Spec}(R) \cong V$ such that
at most one of its coordinates
is zero.
We assume that
$$
x = (0, x_2 , x_3 , \cdots, x_n)
$$
with $x_i \neq 0$ for any $i$ (if all coordinates are nonzero, the proof
is similar). 
Without loss of generality we assume that $\chi_2$ is positive and $\chi_3$ is negative.
We thus have
\begin{align*}
 t \circ (0, x_2 , x_3 , \cdots, x_n ) =
  (0, t^{- \chi_2} x_2, t^{- \chi_3}x_3, \cdots, t^{- \chi_n}x_n ).
\end{align*}
(Here we should note that the weights of the action of
$T$ on $\mathbb{C}^{n} = V$ and the weights of the action of on $k[V]$ are opposite.)
If $t$ converges to zero, then
$t^{- \chi_2} x_2$ diverges to infinity and $t^{- \chi_3} x_3$ converges to zero.
On the other hand, if $t$ diverges to infinity, then
$t^{- \chi_2} x_2$ converges to zero and $t^{- \chi_3} x_3$ diverges to infinity.
Thus the $T$-orbit of $x$ is closed.

Moreover,  there are $a, b \in \mathbb{Z}$ such
that $a \chi_2 + b \chi_3 = 1$. If there is an element $t \in T$ such that $t \circ x = x$,
then
$t^{-\chi_2} = t^{-\chi_3} = 1$, and thus
$$
t = t^{a \chi_2 + b \chi_3} = (t^{-\chi_2})^{-a} (t^{\chi_3})^{-b} = 1.
$$
Therefore, the stabilizer of $x$ is trivial and thus $x \in \mathrm{Spec}(R)^{s}$.
More generally,
for any $x'$ in the set
$$
S := \{(x_1, x_2, \cdots, x_n ) \in V  \mid \mbox{at most one of $x_i$ is zero}
\},
$$
we have that $x' \in \mathrm{Spec}(R)^{s}$ by analogous argument.
Thus we have $S \subseteq \mathrm{Spec}(R)^{s}$ and
$\mathrm{Spec}(R) \backslash \mathrm{Spec}(R)^{s}
\subseteq \mathrm{Spec}(R) \backslash S$.
Moreover, it is direct to see that $\mathrm{codim}(\mathrm{Spec}(R) \backslash S) \geq 2$
by the definition of $S$.
We thus obtain
$\mathrm{codim}(\mathrm{Spec}(R) \backslash \mathrm{Spec}(R)^{s}) \geq 2$,
and hence the action of $T$ on $R$ is generic.

Second,
observe that the action of $T$
on the volume form is
\begin{align*}
t \circ (dx_1 dx_2 \cdots dx_n)& = (t^{\chi_1}dx_1) (t^{\chi_2}dx_2) \cdots (t^{\chi_n}dx_n)\\
&=  t^{\sum^{n}_{i=1} \chi_i} dx_1 dx_2 \cdots dx_n\\
&=dx_1 dx_2 \cdots dx_n,
\end{align*}
for any $t\in T$, from which follows that
the action of $T$ on $V$ is unimodular.
\end{proof}

\section{Proof of Theorem \ref{MainTh}: the first case}\label{sect:proofofmain1}

In this section, we prove the first case of Theorem \ref{MainTh},
which is when $G$ is the one dimensional torus $k^\times$.
Let us first recall the following.

\begin{definition}[Tilting object]
Let $\mathcal{C}$ be a triangulated category. An object $X \in Ob(\mathcal{C})$
is called a {\it tilting object} if
\begin{enumerate}
\item[$(1)$] $\mathcal{C} = \mathrm{thick}(X)$, and

\item[$(2)$] $\mathrm{Hom}_{\mathcal{C}}(X, X[i]) = 0$ for any $i \neq 0$.
\end{enumerate}
\end{definition}

An important result of tilting theory is the following (see \cite{HHK} for more details).
If $X$ is a tilting object of an algebraic Krull-Schmidt triangulated $\mathcal C$, then the following functor
$$
\mathcal C\to K^{b}\left(\mathrm{proj}\big(\mathrm{End}_{\mathcal C}(X)\big)\right),
\, M\mapsto \mathrm{Hom}_{\mathcal C}(X, M)
$$
is an equivalence of triangulated categories.

In the following two subsections, we first
construct an object, as described in \S\ref{sect:Intro},
which generates
$D_{sg}^{gr}(R^T)$ (see Proposition \ref{fully});
then we show that the $\mathrm{Ext}$-groups of the generator
vanish except in degree zero
(see Proposition \ref{ExtVani}). 

\subsection{Existence of the generator}

Let us go back to diagram \eqref{diag:first}
(see also \cite{MU} for more details). 
Here we assume $G=T=k^\times$.
Recall that in the diagram,  $\nu: D^b(\mathrm{grmod}\,R^G)\to
D_{sg}^{gr}(R^G)$ is the Verdier localization functor, and
$\Omega^i_{\Lambda}(-)$ is the $i$-th syzygy functor.
Also, as has already been done before, in what follows we shall write the functors
$$(-)\otimes_\Lambda\Lambda e: 
D^b(\mathrm{grmod}\,\Lambda)\to D^b(\mathrm{grmod}\,R^G)\quad\mbox{and}\quad
D^b(\mathrm{tails}\,\Lambda)
\to D^b(\mathrm{tails}\, R^G)$$
simply as $(-)e$.

\begin{definition}\label{def:EQ}
Let $Q$ be the following set of objects in $D_{sg}^{gr}(R^{T})$:
$$
Q := \left\{
\begin{array}{l}\nu\big((1-e) \Lambda e\big),
\nu\Big(\big(\Omega^{1}_{\Lambda}\big((1-e )\Lambda_0\big)(1)\big) e\Big),
\nu \Big(\big(\Omega^{2}_{\Lambda}\big((1-e)\Lambda_0\big)(2)\big)
e\Big),\\
\cdots, \nu\Big(\big(\Omega^{n-1}_{\Lambda}\big((1-e)\Lambda_0\big)(n-1)\big) e\Big)
\end{array}
\right\},
$$
and let $E_Q$ be the direct sum of the objects in $Q$.
\end{definition}

In \cite[Theorem 1.4]{MU},
Mori and Ueyama
construct a tilting object for $D_{sg}^{gr}(S^G)$,
where $S$ is a Noetherian Koszul AS-regular algebra
and $G$ is a group of finite elements with homological determinant 1.
The above definition is inspired by and analogous to their construction.
Note that in our case, by Theorem \ref{ToAS} the NCCR $\Lambda$ for $R^T$
is an AS-regular algebra of {\it dimension $n-1$} with {\it Gorenstein
parameter $n$}, and the first object in $Q$, which
does not appear in loc. cit., is to compensate
the discrepancy between these two parameters.
The following is the main result of this subsection.

\begin{proposition}\label{fully}
Under the assumptions of Theorem \ref{MainTh},
let $E_Q$ be given in Definition \ref{def:EQ}.
Then we have
$$
D_{sg}^{gr}(R^{T}) = \mathrm{thick}(E_{Q}).
$$
In other words, $E_Q$, as an object in $D_{sg}^{gr}(R^{T})$,
generates $D_{sg}^{gr}(R^{T})$ itself.
\end{proposition}

To prove the proposition, we first introduce several notions.
Remind that in diagram \eqref{diag:first}, $\Phi: D_{sg}^{gr}(R^T)\to
D^b(\mathrm{tails}\,R^T)$
and $\pi: D^b(\mathrm{grmod}\,R^T)\to
D^b(\mathrm{tails}\,R^T)$ denote the Orlov embedding and 
the natural projection respectively.
We also need the following.

\begin{definition}[{c.f. \cite[\S 2]{KS}}]\label{def:minimalapprox}
 Let $A$ be a graded algebra, $\mathcal{C}$ be a class of
 graded $A$-modules which are closed under direct summands, and
 $M$ be a graded $A$-module.
 A {\it minimal left $\mathcal{C}$-approximation} of $M$
 is a morphism of graded $A$-modules
 $ f : M \rightarrow X\in \mathcal{C}$ such that
\begin{enumerate}
\item[(1)] for any $N \in \mathcal{C}$,
$$
\mathrm{Hom}_{\mathrm{grmod}\,A}(N, f):
\mathrm{Hom}_{\mathrm{grmod}\,A}(X, N) \rightarrow
\mathrm{Hom}_{\mathrm{grmod}\,A}(M, N),\,
g\mapsto g\circ f
$$
is surjective, and
\item[(2)]  if $g \in \mathrm{End}_{\mathrm{grmod}\,A}(X)$ satisfies $f = g \circ f$ , then $g$ is an automorphism.
\end{enumerate}
The minimal right $\mathcal{C}$-approximation
is defined dually.
\end{definition}


Now
let $A$ in the above definition 
be the graded commutative ring $R^{T} \cong e \Lambda e$, and
$\mathcal{C}$ be $\mathrm{add}(e \Lambda e)$, which is the category consisting
of direct summands of finite direct sums of $e \Lambda e$.
Since $e \Lambda e$ is a graded locally finite-dimensional algebra with
$(e \Lambda e)_{0} = k$, we have that 
for any graded $e \Lambda e$-module $M$ and any $N\in\mathrm{add}(e\Lambda e)$,
$$\mathrm{Hom}_{e\Lambda e}(M, N)\cong \mathrm{Hom}_{e\Lambda e}(M, e\Lambda e)
\otimes_{e\Lambda e}N,$$
and hence
the minimal left $\mathcal{C}$-approximation
exists for $M$ and is unique up to isomorphism 
by \cite{KS} (see also \cite[Definition 5.1]{DW}).

\begin{notation}
From now on, we let
$L_{e \Lambda e} M : =\mathrm{Cone}(f)$
be the cone of the minimal left $\mathcal{C}$-approximation
of $M$ in the triangulated category $D^{b}(\mathrm{grmod}\,e \Lambda e)$.
\end{notation}


\begin{lemma}\label{lemma:Tocom}
With the notations from diagram \eqref{diag:first}, when $G=k^\times$ we have
\begin{align*}
\Phi(E_{Q})& \cong \Phi \circ \nu\left( L_{e \Lambda e} \left(\Big(\bigoplus^{n-1}_{i = 1}
\big(\Omega_{\Lambda}^{i}
\big((1-e)\Lambda_0\big)(i)\big)
e \Big)\oplus (1 - e)\Lambda e\right)\right)\\
& \cong
\pi\left( L_{e \Lambda e} \left(\Big(\bigoplus^{n-1}_{i = 1}
\big(\Omega_{\Lambda}^{i} \big((1-e)\Lambda_0\big)
(i)\big) e\Big) \oplus (1 - e)\Lambda e\right)\right).
\end{align*}
\end{lemma}

In order to prove this lemma, let us
denote by $(\mathrm{grmod}\,\Lambda)_{\geq 0}$
the full subcategory of $\mathrm{grmod} \, A$
whose objects consist of $N \in \mathrm{grmod} \, \Lambda$
such that $N_{i} = 0$ for $i < 0$. Moreover,
denote by $D^{b}\big((\mathrm{grmod}\,\Lambda)_{\geq 0}\big)$
the full subcategory of $D^{b}(\mathrm{grmod}  \, \Lambda)$ whose objects consist of
$M^{\bullet} \in D^{b}(\mathrm{grmod}  \, \Lambda)$
such that $M^{j} \in \mathrm{Ob}\big
((\mathrm{grmod}\,\Lambda)_{\geq 0}\big)$ for $j \in \mathbb{Z}$.
The following lemma is due to Amiot \cite{CA}.

\begin{lemma}[{\cite[Theorem 4.3]{CA}}]\label{Cancommu}
Let $A$ be a Noetherian graded Gorenstein algebra and $M^{\bullet} \in D^{b}(\mathrm{grmod} \, A)$.
If 
\begin{enumerate}\item[$(1)$]
$M^{\bullet} \in D^{b}\big((\mathrm{grmod}\,A)_{\geq 0}\big)$ and

\item[$(2)$]
$
\mathrm{Hom}^{r}_{D^{b}(\mathrm{grmod}\, A)}\big(M^{\bullet}, A(i)\big) = 0
$
for any $i \leq 0$ and $r \in \mathbb{Z}$,
\end{enumerate}
then
$$
\pi(M^{\bullet}) = \Phi \circ \nu(M^{\bullet}).
$$
\end{lemma}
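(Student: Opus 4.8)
The plan is to show that hypotheses \emph{(1)} and \emph{(2)} force $\pi(M^{\ast})$ to lie in the essential image of the Orlov embedding $\Phi$, after which the identity $\pi(M^{\ast})=\Phi\circ\nu(M^{\ast})$ follows formally. Write $a$ for the (positive) Gorenstein parameter and $d$ for the dimension of $A$. By Lemma \ref{Orlovembed} we have the semi-orthogonal decomposition
$$
D^{b}(\mathrm{tail}\,A)=\langle \pi A(-a+1),\ldots,\pi A,\ \Phi D^{gr}_{sg}(A)\rangle,
$$
so that its last component is the left orthogonal, $\Phi D^{gr}_{sg}(A)={}^{\perp}\langle \pi A(-a+1),\ldots,\pi A\rangle$. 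As recorded after Lemma \ref{lemma:Tocom} we also have $\mu\circ\Phi\cong\mathrm{id}$, and the Verdier localization factors as $\nu=\mu\circ\pi$ (cf. diagram \eqref{diag:first}); these hold for any Noetherian graded Gorenstein $A$ with positive Gorenstein parameter. Thus, once I know $\pi(M^{\ast})\cong\Phi(X)$ for some $X$, I obtain $\Phi\circ\nu(M^{\ast})=\Phi\circ\mu\circ\pi(M^{\ast})=\Phi\circ\mu\circ\Phi(X)\cong\Phi(X)\cong\pi(M^{\ast})$. Hence it suffices to prove that $\pi(M^{\ast})\in{}^{\perp}\langle \pi A(-a+1),\ldots,\pi A\rangle$, that is,
$$
\mathrm{Hom}_{D^{b}(\mathrm{tail}\,A)}\big(\pi(M^{\ast}),\pi A(i)[j]\big)=0 \qquad\text{for }-a+1\le i\le 0\text{ and all }j\in\mathbb{Z}.
$$

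Second, I would transport this vanishing back to $\mathrm{grmod}\,A$. Since $A$ is Noetherian, the torsion subobjects of $M^{\ast}$ are cofinally the internal-degree truncations $M^{\ast}_{\geq p}$, and the standard comparison between morphisms in $\mathrm{grmod}\,A$ and in $\mathrm{tail}\,A$ gives
$$
\mathrm{Hom}_{D^{b}(\mathrm{tail}\,A)}\big(\pi(M^{\ast}),\pi A(i)[j]\big)\cong\varinjlim_{p}\ \mathrm{Hom}_{D^{b}(\mathrm{grmod}\,A)}\big(M^{\ast}_{\geq p},A(i)[j]\big),
$$
so it is enough to show each group on the right vanishes for $i$ in the window $[-a+1,0]$. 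For fixed $p$, the short exact sequence of complexes $0\to M^{\ast}_{\geq p}\to M^{\ast}\to M^{\ast}_{<p}\to 0$ yields a triangle in $D^{b}(\mathrm{grmod}\,A)$; applying $\mathrm{Hom}_{D^{b}(\mathrm{grmod}\,A)}(-,A(i)[\bullet])$ and using hypothesis \emph{(2)}, which kills $\mathrm{Hom}^{\ast}(M^{\ast},A(i))$ for all $i\le 0$, produces an isomorphism
$$
\mathrm{Hom}_{D^{b}(\mathrm{grmod}\,A)}\big(M^{\ast}_{\geq p},A(i)[j]\big)\cong\mathrm{Hom}_{D^{b}(\mathrm{grmod}\,A)}\big(M^{\ast}_{<p},A(i)[j+1]\big).
$$

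Third, I would evaluate the right-hand side by graded local duality. By hypothesis \emph{(1)} the complex $M^{\ast}_{<p}$ is a bounded complex of finite-dimensional graded modules concentrated in internal degrees $[0,p)$, so its cohomologies vanish in negative internal degrees. Because $A$ is graded Gorenstein of dimension $d$ and parameter $a$, it has finite injective dimension, and by dévissage from the defining isomorphism $\mathrm{\underline{Ext}}^{d}_{A}(A_{0},A)\cong D(A_{0})(a)$ the functor $\mathrm{\underline{Ext}}^{\bullet}_{A}(-,A)$ on finite-dimensional modules is concentrated in cohomological degree $d$ with $\mathrm{\underline{Ext}}^{d}_{A}(T,A)\cong D(T)(a)$. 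Feeding this into the hyper-$\mathrm{Ext}$ spectral sequence for $M^{\ast}_{<p}$, the group $\mathrm{Hom}_{D^{b}(\mathrm{grmod}\,A)}(M^{\ast}_{<p},A(i)[m])$ is assembled from the graded pieces $D\big(H^{\bullet}(M^{\ast}_{<p})_{-i-a}\big)$. For $-a+1\le i\le 0$ one has $-i-a\le -1<0$, and since the cohomologies of $M^{\ast}_{<p}$ live in degrees $\geq 0$, all of these pieces vanish. Hence each term of the colimit vanishes, so $\pi(M^{\ast})\in\Phi D^{gr}_{sg}(A)$, and the first paragraph then gives $\pi(M^{\ast})=\Phi\circ\nu(M^{\ast})$.

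The hard part is the degree bookkeeping in the third step: the exact interaction between the positivity hypothesis \emph{(1)} and the width $a$ of the Orlov window, which is precisely what makes the internal degree $-i-a$ land strictly below $0$ for every $i$ in $[-a+1,0]$ while sparing $i=-a$. I would also take care that the comparison isomorphism between $\mathrm{Hom}$'s in $\mathrm{grmod}\,A$ and in $\mathrm{tail}\,A$, and the duality $\mathrm{\underline{Ext}}^{d}_{A}(T,A)\cong D(T)(a)$, are applied correctly to the \emph{complex} $M^{\ast}_{<p}$ rather than merely to modules; both rest on $A$ being Noetherian Gorenstein with finite injective dimension on each side, so that a balanced dualizing complex $A(a)[d]$ is available and the hyper-$\mathrm{Ext}$ spectral sequence can be run uniformly.
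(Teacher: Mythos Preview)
The paper does not supply its own proof of this lemma: it is quoted verbatim from Amiot \cite[Theorem~4.3]{CA} and used as a black box in the proof of Lemma~\ref{lemma:Tocom}. So there is no in-paper argument to compare your proposal against.

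That said, your strategy is the natural one and is essentially what underlies Amiot's (and Orlov's) argument: identify $\Phi D^{gr}_{sg}(A)$ with the left orthogonal ${}^{\perp}\langle \pi A(-a+1),\ldots,\pi A\rangle$ inside $D^{b}(\mathrm{tail}\,A)$ via Lemma~\ref{Orlovembed}, verify that $\pi(M^{\ast})$ lies in this orthogonal, and then use $\mu\circ\Phi\cong\mathrm{id}$ together with $\nu=\mu\circ\pi$ to conclude. Your use of hypothesis~(1) to force the internal degree $-i-a$ to be strictly negative for $i\in[-a+1,0]$, combined with the Gorenstein duality $\underline{\mathrm{Ext}}^{d}_{A}(T,A)\cong D(T)(a)$ on finite-length modules, is exactly the mechanism that makes the window match up.

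One point deserves more care than you give it. The colimit formula
\[
\mathrm{Hom}_{D^{b}(\mathrm{tail}\,A)}\big(\pi(M^{\ast}),\pi A(i)[j]\big)\cong\varinjlim_{p}\ \mathrm{Hom}_{D^{b}(\mathrm{grmod}\,A)}\big(M^{\ast}_{\geq p},A(i)[j]\big)
\]
is correct here, but it is not merely the abelian-category statement: one must argue either that the truncations $M^{\ast}_{\geq p}\to M^{\ast}$ are cofinal among morphisms with torsion cone in the Verdier quotient, or (more cleanly) use the right adjoint $R\omega$ of $\pi$ and the local-cohomology triangle $R\Gamma_{\mathfrak m}A\to A\to R\omega\pi A\to$ together with the Gorenstein identification of $R\Gamma_{\mathfrak m}A$. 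Either route works, but you should flag which one you are invoking rather than calling it ``standard''. The rest of your bookkeeping is sound.
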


\begin{proof}[Proof of Lemma \ref{lemma:Tocom}]
The first equality holds since
$$
E_{Q} = \nu\left( L_{e \Lambda e}
\left(\Big(\bigoplus^{n-1}_{i = 1}\big(\Omega_{\Lambda}^{i}
\big((1-e)\Lambda_0\big)(i)\big) 
e \Big)\oplus (1 - e)\Lambda e\right)\right)
$$
in $D_{sg}^{gr}(R^{T})$.

To prove the second equality, we only need to show
$$L_{e \Lambda e}\left(\Big(\bigoplus^{n-1}_{i = 1}
\big(\Omega_{\Lambda}^{i} \big((1-e)\Lambda_0\big)(i)\big) e\Big)
\oplus (1 - e)\Lambda e\right)$$
satisfies the two conditions of
Lemma \ref{Cancommu}; that is, we need to show the following:

\begin{claim}\label{claim22}
$L_{e \Lambda e}\left(\Big(\bigoplus^{n-1}_{i = 1}\big(\Omega_{\Lambda}^{i} \big(
(1-e)\Lambda_0\big)(i)\big) e\Big)
\oplus (1 - e)\Lambda e\right)\in D^b\big((\mathrm{grmod}\,\Lambda)_{\ge 0}\big)$.
\end{claim}

\begin{claim}\label{claim33}
$\mathrm{Hom}^{r}_{D^b(\mathrm{grmod}\, A)}
\left(L_{e \Lambda e}\left(\Big(\bigoplus^{n-1}_{i = 1}\big(\Omega_{\Lambda}^{i}\big(
 (1-e)\Lambda_0\big)(i)\big) e\Big)
\oplus (1 - e)\Lambda e\right), \Lambda(j)\right)$ is zero 
for any $j\le 0$ and $r \in \mathbb{Z}$.
\end{claim}

Since the proofs of these two claims are lengthy,
we defer their proofs to the end of this subsection.
Admitting these two claims,
Lemma \ref{lemma:Tocom} is proved.
\end{proof}

We continue the preparation of the proof of Proposition
\ref{fully}.
Since $\Lambda$ is an AS-regular algebra and
a Cohen-Macaulay $R^{T}$-module,
$(1-e)\Lambda_0$ has a bounded graded projective $\Lambda$-module resolution, denoted by $P^{\bullet}$, with length
$n$ such that 
\begin{equation}\label{def:Presol}
\begin{split}
&P^{1-n} = (1-e)\Lambda(-n),\quad
P^{0} = (1-e)\Lambda,\\
&P^{i}\in \mathrm{add}_{j \in [-n+1, -1]}\big(\bigoplus_{i} e_{i} \Lambda(j)\big)
\;\mbox{for}\, -n +1 \leq i \leq -1.
\end{split}
\end{equation}
(In fact, $P^i$ is given by the minimal right $\mathrm{add}(\Lambda)$-approximation
of
$\ker d_{i+1}: P^{i+1}\to P^{i+2} $
in the sense of Definition \ref{def:minimalapprox} above.)

\begin{lemma}\label{claim1}
With the notations from \eqref{diag:first}, we have that
$\pi \big((1-e) \Lambda e(j)\big)$
is an element in 
\begin{equation}\label{eq:induction0} 
\mathrm{thick}
\left(\pi\left(\Big( \bigoplus^{n-1}_{i = 1}
\big(\Omega_{\Lambda}^{i}\big( (1-e)\Lambda_0\big)(i)\big) e\Big)
\oplus (1 - e)\Lambda e\right), \pi\Big( \bigoplus^{-n+1}_{r = 0} e \Lambda e(r)\Big)\right),
\end{equation}
for any $-n+1 \leq j \leq 0$.
\end{lemma}

\begin{proof}
We show the proof by induction. First, observe that
$
\pi \big((1-e) \Lambda e (j)\big)
$
is an object in
$$\mathrm{thick}\left(\pi\left(\Big( \bigoplus^{n-1}_{i = 1}
\big(\Omega_{\Lambda}^{i} \big((1-e)\Lambda_0\big)(i) e
\big)\Big)
\oplus (1 - e)\Lambda e\right), \pi\Big( \bigoplus^{-n+1}_{r = 0} e \Lambda e(r)\Big)\right)
$$
for $j = 0, -1$,
since $ \pi \big(\Omega_{\Lambda}^{n-1} 
\big((1-e)\Lambda_0\big)(n-1) e \big)\cong
\pi \big((1-e) \Lambda e (-1)\big)$.

Next, we assume the claim holds for $l\leq j\leq 0$; that is,
$
\pi \big((1-e) \Lambda e (j)\big) 
$
is an object in
$$
\mathrm{thick}\left(\pi\left(\Big( \bigoplus^{n-1}_{i = 1}
\big(\Omega_{\Lambda}^{i}\big( (1-e)\Lambda_0\big)(i) \big)\Big)
\oplus (1 - e)\Lambda e\right), \pi\Big( \bigoplus^{-n+1}_{r = 0} e \Lambda e(r)\Big)\right)
$$
for $l \leq j \leq 0$ and some $-n+1 \leq l \leq -1$.
Consider the following exact sequence of graded $\Lambda$-modules
$$
\xymatrixcolsep{1.2pc}
\xymatrixrowsep{.8pc}
\xymatrix{
0 \ar[r] &P^{1-n}(n+l-1) \ar@{=}[d]\ar[r]&\ar[r]
P^{2-n}(n+l-1)&
\ar[r] \cdots&\ar[r] \Omega_{\Lambda}^{l+n-1} (1-e)(n+l-1)  &0,\\
&(1-e)\Lambda (l-1) &&&
}
$$
which implies that
\begin{align*}
0 \longrightarrow \pi \big((1-e)\Lambda e\big) (l-1)& \longrightarrow 
\pi\big (P^{2-n}e(n+l-1)\big)
\longrightarrow \cdots \\
&\longrightarrow \pi\big(\Omega_{\Lambda}^{l+n-1}\big( (1-e)\Lambda_0\big)(n+l-1)
e\big)
\longrightarrow 0
\end{align*}
is isomorphic to $0$
in $D^{b}(\mathrm{tails}\, R^{T})$.
Now observe that $\pi \big(P^{2-n}e(n+l-1)\big)$
is the direct sum of some summands of
$\pi \big(\Lambda e (l)\big)$ and $\pi\big(\Lambda e(l+1)\big)$.
Indeed, in the resolution $P^{\bullet}$, for any $-i$ and any summand
$e' \Lambda(r')$ of $P^{-i}$
and summand $e'' \Lambda(r'')$ of $P^{-i+1}$ , the composition
$$
e' \Lambda(r') \hookrightarrow P^{-i}
\xrightarrow{d_{-i}} P^{-i+1} \twoheadrightarrow e'' \Lambda(r'')
$$
is zero unless $r' < r''$, since $P^{-i}$ is constructed by
the minimal right $\mathrm{add}(\Lambda)$-approximation of $\mathrm{Ker}(d_{-i+1})$.
Moreover, since $\Lambda$ is an AS-regular algebra of dimension $n-1$ with Gorenstein parameter $n$,
we have that the length of $P^{\bullet}$ is $n-1$ and $P^{1-n}$ is
the direct sum of some summands of $\Lambda (-n)$.
Hence, we obtain that for any $-i$, $P^{-i}$ is the direct sum
of some summands of $\Lambda (-i-1)$ and $\Lambda(-i)$.

In a similar way, we have that
$\pi
\big(P^{-l-n+1}e(n+l-1)\big)$
is the direct sum of some summands of $ \pi\big(\Lambda e (-1)\big)$ and $\pi(\Lambda e)$.
Combining these two statements, we have
$$
\pi \big(P^{r}e(n+l-1)\big) \in
\mathrm{thick}\left(\pi \Big(\bigoplus^{0}_{i = l} (1-e) \Lambda e(i)\Big),
\pi\Big(\bigoplus^{0}_{r = -n+1} e \Lambda e(r)\Big)\right)
$$
for any $2-n \leq r \leq l-n-1$. Hence from
$
\mathrm{thick}\Big(\pi\big(\bigoplus^{0}_{i = l} (1-e) \Lambda e(i)\big)\Big)
$
contained in
$$
\mathrm{thick}\left(\pi\left( \Big(\bigoplus^{n-1}_{i = 1}\big
(\Omega_{\Lambda}^{i} \big((1-e)\Lambda_0\big)(i)\big) e
\Big) \oplus (1 - e)\Lambda e\right),
\pi\Big(\bigoplus^{0}_{r = -n+1} e \Lambda e(r)\Big)\right),
$$
we have that
$
\pi \big(P^{r} e(n+l-1)\big)
$
is an object in
$$\mathrm{thick}\left(\pi\left( \Big(\bigoplus^{n-1}_{i = 1}
\big(\Omega_{\Lambda}^{i}\big( (1-e)\Lambda_0\big)(i)\big)
 e\Big)
\oplus (1 - e)\Lambda e\right), \pi\Big( \bigoplus^{0}_{r = -n+1} e \Lambda e(r)\Big)\right)
$$
for any $2-n \leq r \leq l-n-1$. Therefore we obtain that
$
\pi \big((1-e)\Lambda e\big) (l-1)
$
is an object in
$$ 
\mathrm{thick}\left(\pi\left(\Big( \bigoplus^{n-1}_{i = 1}
\big(\Omega_{\Lambda}^{i}\big( (1-e)\Lambda_0\big)(i)\big) e\Big)
\oplus (1 - e)\Lambda e\right), \pi\Big( \bigoplus^{0}_{r = -n+1} e \Lambda e(r)\Big)\right),
$$
and the lemma follows.
\end{proof}

We also recall the following lemma, due to Orlov:

\begin{lemma}[{\cite[Theorem 16]{DR2}}]\label{Orlovembed}
Suppose $A$ is a Gorenstein algebra of dimension $d$ with positive Gorenstein parameter $a$.
If $A$ is Noetherian, then
there is a fully faithful functor $\Phi: D^{gr}_{sg}(A) \rightarrow D^{b}(\mathrm{tails}\, A)$ and
a semi-orthogonal decomposition
$$
D^{b}(\mathrm{tails}\,A) = \langle \pi A(-a+1), \pi A(-a+2), \cdots, \pi A, \Phi D^{gr}_{sg}(A) \rangle.
$$
\end{lemma}

\begin{proof}[Proof of Proposition \ref{fully}]
By Lemma \ref{Orlovembed} and Proposition \ref{ToAS}, we have
$$
D^{b}(\mathrm{tails}\, \Lambda) 
= \mathrm{thick}\left(\pi\Big( \bigoplus^{0}_{i = -n+1} \Lambda(i)\Big)\right).
$$
Applying $(-)e$ in Theorem \ref{Sing} to both sides of the above identity, we get
$$
D^{b}(\mathrm{tails}\, R^{T}) = \mathrm{thick}\left( \pi\Big(\bigoplus^{0}_{i = -n+1} 
\Lambda e(i)\Big)\right).
$$
Now by Lemma \ref{Orlovembed}, applying $\mu$
to both sides of the above identity we further obtain
$$
D_{sg}^{gr}(R^{T}) = \mathrm{thick}\left( \bigoplus^{0}_{i = -n+1} \mu \circ \pi \big((1-e) \Lambda e(i)\big)\right).
$$
Thus to prove the proposition it suffices to prove that
\begin{equation}\label{eq:mupi}
\mu \circ \pi \big((1-e)\Lambda e(j)\big) \in \mathrm{thick}(E_{Q}),
\end{equation}
for any $-n+1 \leq j \leq 0$.

In fact, since $\mu \circ \Phi$ is the identify functor on $D_{sg}^{gr}(R^{T})$, we have
\begin{align*}
&\nu \left(L_{e \Lambda e} \left(\Big(\bigoplus^{n-1}_{i = 1}\big(\Omega_{\Lambda}^{i}
\big((1-e)\Lambda_0\big)(i)\big) e\Big)
\oplus (1 - e)\Lambda e\right)\right)\\
& \cong \mu \circ \Phi \circ \nu \left( L_{e \Lambda e} 
\left(\Big(\bigoplus^{n-1}_{i = 1}
\big(\Omega_{\Lambda}^{i} \big((1-e)\Lambda_0\big)(i)\big) e 
\Big)\oplus (1 - e)\Lambda e\right)\right) \\
&\cong  \mu \circ \pi\left(L_{e \Lambda e} \left(\Big(\bigoplus^{n-1}_{i = 1}
\big(\Omega_{\Lambda}^{i}\big((1-e)\Lambda_0\big)(i)\big)  e
\Big) \oplus (1 - e)\Lambda e\right)\right),
\end{align*}
where the last equality follows from Lemma \ref{lemma:Tocom}.
Hence, we have 
\begin{align}\label{eq:EQ}
E_Q&=
\nu \left(L_{e \Lambda e} \left(\Big(\bigoplus^{n-1}_{i = 1}
\big(\Omega_{\Lambda}^{i}\big((1-e)\Lambda_0\big)
(i)\big) e \Big)\oplus (1 - e)\Lambda e\right) \right)\nonumber\\
&\cong \mu \circ \pi \left( L_{e \Lambda e} 
\left(\Big( \bigoplus^{n-1}_{i = 1}
\big(\Omega_{\Lambda}^{i}\big((1-e)\Lambda_0\big)
(i)\big) e\Big) \oplus (1 - e)\Lambda e\right)\right).
\end{align}
Now applying $\mu$ to both sides of \eqref{eq:induction0},
we get $$\mu\circ\pi \big((1-e) \Lambda e(j)\big)
\in \mathrm{thick}
\left(\mu\circ\pi\left(\Big( \bigoplus^{n-1}_{i = 1}\big(\Omega_{\Lambda}^{i} \big((1-e)\Lambda_0\big)
(i)\big) e\big)\Big)
\oplus (1 - e)\Lambda e\right)\right),$$
where the right hand side is equivalent to
$\mathrm{thick}(E_Q)$ by \eqref{eq:EQ}.
This proves \eqref{eq:mupi}, and the proposition follows.
\end{proof}

\begin{proof}[Proof of Claim \ref{claim22}]
First, since $\Lambda$ is an AS-regular algebra of dimension $n-1$
with Gorenstein parameter $n$,
in the $(-i)$-th and $(-i+1)$-st positions of the resolution $P^{\bullet}(i)$, which are
$$
P^{-i}(i) \twoheadrightarrow \Omega^{i}_{\Lambda}(1-e)(i) \hookrightarrow P^{-i+1}(i),
$$
we have that $P^{-i}(i)$ is the direct sum of some summands of
$\Lambda (-1)$ and $ \Lambda(0)$,
and $P^{-i+1}(i)$ is the direct sum of some summands of $\Lambda(0)$ and $\Lambda (1)$.
Since $\Omega^{i}_{\Lambda}\big((1-e)\Lambda_0\big)(i)$ is a submodule of $P^{-i+1} (i)$, if
$\Omega^{i}_{\Lambda}\big((1-e)\Lambda_0\big)
(i)$ were not in $(\mathrm{grmod}\,\Lambda)_{\geq 0}$, then
the image of $\Omega^{i}_{\Lambda}\big((1-e)\Lambda_0\big)(i)_{-1}$ in $P^{-i+1} (i)$
contains a summand of $\Lambda_{0}(1)$.
But the image of
$$
d_{i}(i): P^{-i}(i) \rightarrow P^{-i+1}(i)
$$
does not contain any summand of $\Lambda_{0}(1)$
by the construction of $P^{\bullet}$. Thus
$$
\Omega^{i}_{\Lambda}\big((1-e)\Lambda_0\big)(i) \in (\mathrm{grmod}\,\Lambda)_{\geq 0}
$$
for any $ n-1 \geq i \geq 1$.

Moreover, we see that $(1-e) \Lambda \in (\mathrm{grmod}\,\Lambda)_{\geq 0}$.
Thus tensoring with $(-) \otimes_{\Lambda} \Lambda e$, we have
$$
\Big(\bigoplus^{n-1}_{i = 1}\big(\Omega_{\Lambda}^{i} \big((1-e)\Lambda_0\big)
(i)\big) e\Big)
\oplus (1 - e)\Lambda e \in (\mathrm{grmod}\,R^{T})_{\geq 0}.
$$
Applying the minimal left $\mathrm{add}(e \Lambda e)$-approximation of the above $R^T$-module,
 we also have
\[
L_{e \Lambda e}\left(\Big(\bigoplus^{n-1}_{i = 1}
\big(\Omega_{\Lambda}^{i}\big((1-e)\Lambda_0\big)(i)\big) e\Big)
\oplus (1 - e)\Lambda e\right) \in (\mathrm{grmod}\,R^{T})_{\geq 0}.
\qedhere\]
\end{proof}

In order to prove Claim \ref{claim33}, we first recall the following
lemma, due to Mori:

\begin{lemma}[{\cite[Lemma 2.9]{M}}]\label{Depth2}
Let $A$ be a right Noetherian graded algebra with $A_0$ semi-simple over $k$. Then the natural morphism
$$
\mathrm{\underline{Hom}}_{\mathrm{grmod}\, A}(M, N)
\rightarrow \mathrm{\underline{Hom}}_{\mathrm{tails}\,A}(M, N)
$$
is an isomorphism of vector spaces
for any $N, M \in \mathrm{grmod}\, A$ with $\mathrm{depth} (N) \geq 2$.
\end{lemma}

\begin{proof}[Proof of Claim \ref{claim33}]
Since
$$
\mathrm{Hom}^{l}_{D^{b}(\mathrm{grmod}\, R^{T})}
\big((1-e) \Lambda e, e \Lambda e(j)\big) = 0
$$
for any $j \leq 0$ and $l \in \mathbb{Z}$, and
$$
\mathrm{Hom}^{l}_{D^{b}(\mathrm{grmod}\, R^{T})}
\left(L_{e\Lambda e}\big(\big(\Omega_{\Lambda}^{i}\big((1-e)\Lambda_0\big)(i)
\big) e\big), e \Lambda e(j)\right) = 0
$$
for any $l \leq -2$, $1\le i\le n-1$ and $j\le 0$,
we have to show that
$$
\mathrm{Hom}^{l}_{D^{b}(\mathrm{grmod}\, R^{T})}
\left(L_{e\Lambda e}\big(\big(\Omega_{\Lambda}^{i} \big((1-e)\Lambda_0\big)
(i)\big) e\big), e \Lambda e(j)\right) = 0
$$
for any $l\ge -1$, $ 1\le i\le n-1$ and $j \leq 0$.

In the meantime, from the proof of Lemma \ref{claim1}, we have that
$P^{-i}(i)$ is the direct sum of some summands of $\Lambda(-1)$ and $\Lambda$.
Thus, we have
$$
\mathrm{Hom}^{l}_{D^{b}(\mathrm{grmod}\, R^{T})}
\left(L_{e\Lambda e}\big(\big(\Omega_{\Lambda}^{i} \big((1-e)\Lambda_0\big)(i) \big)e\big), 
e \Lambda e(j)\right) = 0
$$
for any $l\ge -1$, $1\le i\le  n-1$ and $j \leq -2$.

Thus, combining the above cases, to show the claim, we only need to show that
$$
\mathrm{Hom}^{l}_{D^{b}(\mathrm{grmod}\, R^{T})}
\left(L_{e\Lambda e}\big(\big(\Omega_{\Lambda}^{i}\big((1-e)\Lambda_0\big)(i)\big) e\big), e \Lambda e(j)\right) = 0
$$
for any $l\ge -1$, $1\le i\le n-1$ and $-1 \leq j \leq 0$.
We divide the proof into several cases.

\noindent{\bf Case 1: $l\ge 1$, $1\le i\le n-1$ and $j=-1$ or $0$.}
We give $\Omega_{\Lambda}^{i} \big((1-e)\Lambda_0\big)(i)$ the following resolution
$$
\xymatrixcolsep{1.2pc}
\xymatrixrowsep{.8pc}
\xymatrix{
0 \ar[r]&P^{-n+1}(i) \ar[r]\ar@{=}[d]& \cdots \ar[r]&P^{-i-1}(i)
\ar[r]& P^{-i}(i) \ar[r]& \Omega_{\Lambda}^{i} (1-e)(i) \ar[r]& 0\\
& (1 - e)\Lambda(i-n) &&&&&
}
$$
from $P^{\bullet}$, which is
denoted by $P^{\bullet}_{-i}(i)$.
Since $\Lambda e$ is a Cohen-Macaulay $R^{T}$-module, we have
$$
\mathrm{Hom}^{r}_{D^{b}(\mathrm{grmod}\;
R^{T})}\left(\big(\Omega_{\Lambda}^{i} \big((1-e)\Lambda_0\big)(i) \big)
e, e \Lambda e(j)\right) =
\mathrm{Hom}^{r}_{K^{b}(\mathrm{grmod}\;
R^{T})}\left(P^{\bullet}_{-i}(i) e, e \Lambda e(j)\right)
$$
for any $r \in \mathbb{Z}$, where $K^b(-)$ means the corresponding bounded homotopy category.
Moreover, since $\Lambda$ is an AS-regular algebra, we have
\begin{align*}
&\mathrm{Hom}^{l}_{K^{b}(\mathrm{grmod}\, R^{T})}\big(P^{\bullet}_{-i}(i) e, e \Lambda e(j)\big) \\
& \cong \mathrm{Hom}^{l+i}_{K^{b}(\mathrm{grmod}\,R^{T})}\big(P^{\bullet}(i)e, e \Lambda e(j)\big) \\
&\cong  \mathrm{Hom}^{l+i}_{D^{b}(\mathrm{grmod}\, R^{T})}\big(P^{\bullet}(i)e, e \Lambda e(j)\big)\\
&\cong \mathrm{Hom}^{l+i}_{D^{b}(\mathrm{grmod}\, R^{T})}\big((1-e)
\otimes^{L}_{\Lambda} \Lambda e(i), e \Lambda e(j)\big)\\
&\cong \mathrm{Hom}^{l+i}_{D^{b}(\mathrm{grmod}\, R^{T})}\big((1-e)e(i), e \Lambda e(j)\big)\\
&\cong 0
\end{align*}
for $l \geq 1$. Thus we have
$$
\mathrm{Hom}^{l}_{D^{b}(\mathrm{grmod}\, R^{T})}
\left(\big(\Omega_{\Lambda}^{i} \big((1-e)\Lambda_0\big)(i) \big)e, e \Lambda e(j)\right) = 0
$$
for $l \geq 1$.
Applying the minimal left $\mathrm{add}(e \Lambda e)$-approximation of 
$\big(\Omega_{\Lambda}^{i}\big( (1-e)\Lambda_0\big)(i)\big) e$, we have
$$
\mathrm{Hom}^{l}_{D^{b}(\mathrm{grmod}\, R^{T})}
\left(L_{e\Lambda e}\big(\big(\Omega_{\Lambda}^{i}\big((1-e)\Lambda_0\big)(i) \big)e\big), 
e \Lambda e(j)\right) = 0
$$
for $l \geq 1$.

\noindent{\bf Case 2: $l=0$ or $-1$, $1\le i \leq n-1$ and $j=0$.}
From the fist condition in Definition \ref{def:minimalapprox}, we have
$$
\mathrm{Hom}^{l}_{D^{b}(\mathrm{grmod}\, R^{T})}
\left(L_{e\Lambda e}\big(\big(\Omega_{\Lambda}^{i}\big((1-e)\Lambda_0\big)(i) 
\big)e\big), e \Lambda e\right) = 0,
$$
for any $i=1,\cdots, n-1$ and $l =0$ or $-1$.

\noindent{\bf Case 3: $l=-1$, $1\le i\le n-1$ and $j=-1$.}
Since
$P^{-i}e(i)$ is a direct sum of some summands of $\Lambda e (-1)$ and $ \Lambda e(0)$,
$$
\mathrm{Hom}^{-1}_{D^{b}(\mathrm{grmod}\, R^{T})}
\left(L_{e\Lambda e}\big(\big(\Omega_{\Lambda}^{i} \big((1-e)\Lambda_0\big)(i) \big)e\big), 
e \Lambda e(-1)\right) = 0.
$$

\noindent{\bf Case 4: $l=0$, $i=1$ and $j=-1$.}
Similarly to the above case,
$$
\mathrm{Hom}^{0}_{D^{b}(\mathrm{grmod}\, R^{T})}
\left(\big(\Omega_{\Lambda}^{1}\big((1-e)\Lambda_0)
(1)\big) e,
e \Lambda e(-1)\right) = 0.
$$

\noindent{\bf Case 5: $l=0$, $2\le i \leq n-1$ and $j=-1$.}
We have to show that,
for $i\geq 2$,
\begin{align*}
&\mathrm{Hom}^{0}_{D^{b}(\mathrm{grmod}\, R^{T})}
\left(L_{e\Lambda e}\big(\big(\Omega_{\Lambda}^{i}\big((1-e)\Lambda_0\big)
(i) \big)e\big), e \Lambda e(-1)\right)\\
&\cong
\mathrm{Hom}^{0}_{D^{b}(\mathrm{grmod}\, R^{T})}
\left(\big(\Omega_{\Lambda}^{i}\big((1-e)\Lambda_0\big)(i) \big)e, 
e \Lambda e(-1)\right)\\
& = 0.
\end{align*}
We show the equality by contradiction.
If there were a morphism
$$0 \neq s \in \mathrm{Hom}^{0}_{D^{b}(\mathrm{grmod}\, R^{T})}
\left(\big(\Omega_{\Lambda}^{i}\big((1-e)\Lambda_0\big)(i)\big) e, e \Lambda e(-1)\right),$$
then there were a morphism
$$
0 \neq \tilde{s} \in \mathrm{Hom}^{0}_{D^{b}(\mathrm{grmod}\, R^{T})}\big(P^{-i}(i)e, e \Lambda e(-1)\big),
$$
where $\tilde{s}$ is the composition of $s$
with the morphism $P^{-i} e(i) \twoheadrightarrow 
\big(\Omega_{\Lambda}^{i}\big( (1-e)\Lambda_0\big)(i) \big)e$ 
in $P^{\bullet}_{-i}$.
Since $P^{-i}e(i)$ is the direct sum of some summands of 
$\Lambda e (-1)$ and $ \Lambda e(0)$,
there is a summand $e \Lambda e (-1)$ of $P^{-i}e(i)$ such that the embedding
$$
e \Lambda e (-1) \hookrightarrow \Lambda e (-1) \hookrightarrow P^{-i}(i)e
$$
composed with $\tilde{s}$ is the identity.
Now fix the above summand $e \Lambda (-1)$ of $P^{-i}(i)$.
By Theorem \ref{Sing} and Lemma \ref{Depth2}, we have
\begin{align*}
&\mathrm{Hom}_{\mathrm{grmod}\, R^{T}}
\left(\big(\Omega_{\Lambda}^{i} \big((1-e)\Lambda_0\big)
(i)\big) e, e \Lambda e(-1)\right)\\
 & \cong
\mathrm{Hom}_{\mathrm{tails}\, R^{T}}\left(\big(\Omega_{\Lambda}^{i}\big((1-e)\Lambda_0\big)
(i) \big)e, e \Lambda e(-1)\right) \\
&\cong  \mathrm{Hom}_{\mathrm{tails}\, \Lambda}\left(
\Omega_{\Lambda}^{i} \big((1-e)\Lambda_0\big)
(i), e \Lambda(-1)\right)\\
&\cong  \mathrm{Hom}_{\mathrm{grmod}\, \Lambda}\big(\Omega_{\Lambda}^{i} \big((1-e)\Lambda_0\big)
(i), e \Lambda(-1)\big).
\end{align*}
Denote the above isomorphism by
$$
\begin{array}{cl}
\kappa:& \mathrm{Hom}_{\mathrm{grmod}\, \Lambda}
\big(\Omega_{\Lambda}^{i} \big((1-e)\Lambda_0\big)(i), e \Lambda(-1)\big)\\
&\stackrel{\cong}\rightarrow
\mathrm{Hom}_{\mathrm{grmod}\, R^{T}}\left(
\big(\Omega_{\Lambda}^{i} \big((1-e)\Lambda_0\big)(i)\big)
 e,
e \Lambda e(-1)\right).
\end{array}
$$
By the same method, we also have the following two isomorphisms:
$$
\kappa': \mathrm{Hom}_{\mathrm{grmod}\, \Lambda}\big(e \Lambda(-1), e \Lambda(-1)\big)
\stackrel{\cong}\rightarrow
\mathrm{Hom}_{\mathrm{grmod}\, R^{T}}\big(e\Lambda e(-1), e \Lambda e(-1)\big)
$$
and
\begin{equation*}
\begin{array}{cl}
\kappa'': &\mathrm{Hom}_{\mathrm{grmod}\, \Lambda}\big(e\Lambda(-1),
\Omega_{\Lambda}^{i} \big((1-e)\Lambda_0\big)(i)\big)\\
&\stackrel{\cong}\rightarrow \mathrm{Hom}_{\mathrm{grmod}\, R^{T}}\left(e \Lambda e(-1),
\big(\Omega_{\Lambda}^{i}\big((1-e)\Lambda_0\big)(i) \big) e\right).
\end{array}
\end{equation*}
Denote by $p_{i}$ the composition of morphisms
$e \Lambda(-1) \hookrightarrow P^{-i}(i) \twoheadrightarrow
\Omega_{\Lambda}^{i} \big((1-e)\Lambda_0\big)(i)$ in $\mathrm{grmod}\,\Lambda$.
Then we have
$$
\kappa'(s \circ p_{i}) = \kappa(s) \circ \kappa''(p_{i}) = id_{e \Lambda e(-1)}.
$$
Since $s \circ p_{i} \in \mathrm{Hom}_{\mathrm{grmod}\, \Lambda}\big(e\Lambda(-1), e \Lambda(-1)\big) = e \Lambda e$,
we have that $s \circ p_{i}$ is equal to $id_{\Lambda}$ up to a scalar.
Thus $e \Lambda(-1)$ is a summand of $\Omega_{\Lambda}^{i} \big((1-e)\Lambda_0\big)(i)$ and $p_{i}$
is injective.
However, this contradicts to the construction of the resolution $P^{\bullet}$ and the fact that
$$
\mathrm{Hom}^{j}_{D^{b}(\mathrm{grmod}\, \Lambda)}\big(P^{\bullet}, e \Lambda(i)\big) = 0
$$
for any $i, j \in \mathbb{Z}$.

Therefore we have
\[
\mathrm{Hom}^{0}_{D^{b}(\mathrm{grmod}\, R^{T})}\left(
L_{e\Lambda e}\big(\big(\Omega_{\Lambda}^{i}
\big( (1-e)\Lambda_0\big)(i) \big)e\big), e \Lambda e(-1)\right) = 0.
\qedhere\]
\end{proof}



\subsection{Ext-groups of the generator}


We first recall the following two lemmas due to Minamoto and Mori and
to Mori and Ueyama respectively.

\begin{lemma}[{\cite[Proposition 4.4]{MM}}]\label{VaniExt}
Let $\Lambda$ be an AS-regular algebra of dimension $d \geq 1$ with Gorenstein parameter $a$.
Then we have
$$
\mathrm{Hom}^{q}_{D^{b}(\mathrm{tails}\, \Lambda)}\big(\Lambda(i), \Lambda(j + ma)\big) = 0
$$
for $q \neq 0, 0\leq i, j \leq a -1$ and $m \geq 1$.
\end{lemma}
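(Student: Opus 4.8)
The plan is to convert the $\mathrm{Hom}$-groups in $D^{b}(\mathrm{tail}\,\Lambda)$ into local cohomology of $\Lambda$ viewed as a graded module over itself, and then finish by a bookkeeping of the cohomological degree $q$ against the internal degree.

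First I would set up the standard homological apparatus for noncommutative projective schemes in the present (non-connected, $\Lambda_{0}$ semisimple) setting. Since $\Lambda$ is AS-regular, it is Noetherian and AS-Gorenstein, so Artin--Zhang's theory applies: the projection $\pi:\mathrm{grmod}\,\Lambda\to\mathrm{tail}\,\Lambda$ has a right adjoint section functor $\omega$, and for any complex $N$ of graded $\Lambda$-modules there is a functorial exact triangle
$$
R\underline{\Gamma}_{\mathfrak m}(N)\longrightarrow N\longrightarrow R\omega\pi(N)\longrightarrow R\underline{\Gamma}_{\mathfrak m}(N)[1],
$$
where $\underline{\Gamma}_{\mathfrak m}$ is the torsion functor with respect to $\mathfrak m=\Lambda_{\geq 1}$ (so $R^{p}\underline{\Gamma}_{\mathfrak m}=\underline{H}^{p}_{\mathfrak m}$ is graded local cohomology). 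Because $\Lambda(i)$ is a finitely generated projective graded module, derived adjunction together with $\mathrm{Hom}_{\mathrm{grmod}\,\Lambda}(\Lambda(i),X^{\bullet})\cong (X^{\bullet})_{-i}$ gives
$$
\mathrm{Hom}^{q}_{D^{b}(\mathrm{tail}\,\Lambda)}\big(\Lambda(i),\Lambda(j+ma)\big)\;\cong\;H^{q}\big(R\omega\pi\,\Lambda(j+ma)\big)_{-i}.
$$

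Next I would identify the outer terms of the triangle for $N=\Lambda(j+ma)$, which is concentrated in cohomological degree $0$. The AS-Gorenstein property, via graded local duality, yields: $\underline{H}^{p}_{\mathfrak m}(\Lambda)=0$ for $p\neq d$; $\mathrm{depth}\,\Lambda=d\geq 1$, so $\underline{H}^{0}_{\mathfrak m}(\Lambda)=0$; and $\underline{H}^{d}_{\mathfrak m}(\Lambda)$ is, up to a twist by the Gorenstein parameter, the graded $k$-dual $D(\Lambda)$, so that $\underline{H}^{d}_{\mathfrak m}(\Lambda)_{n}=0$ for $n>-a$. Plugging this into the long exact cohomology sequence of the triangle, twisted by $j+ma$ and restricted to internal degree $-i$, shows that $H^{q}(R\omega\pi\,\Lambda(j+ma))_{-i}$ can be nonzero only for $q=d-1$, in which case it is $\underline{H}^{d}_{\mathfrak m}(\Lambda)_{\,j+ma-i}$ (the value $q=-1$ being killed by $\underline{H}^{0}_{\mathfrak m}(\Lambda)=0$, the case $d=1$ being covered since then $d-1=0$, and all other $q\neq 0$ being trivial). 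The degree count then closes the argument: $\underline{H}^{d}_{\mathfrak m}(\Lambda)_{\,j+ma-i}\neq 0$ would force $j+ma-i\leq -a$, i.e. $i-j\geq (m+1)a$, which is impossible because $0\leq i,j\leq a-1$ gives $i-j\leq a-1<a\leq (m+1)a$ for $m\geq 1$.

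I expect the main obstacle to be not the argument itself but recording cleanly the homological input in the $\Lambda_{0}$-semisimple graded setting: the $\pi\dashv\omega$ adjunction and its derived form, the fundamental triangle relating $N$, $R\omega\pi(N)$ and $R\underline{\Gamma}_{\mathfrak m}(N)$, and the precise shape of graded local duality for an AS-Gorenstein algebra that pins down the internal degrees in which $\underline{H}^{d}_{\mathfrak m}(\Lambda)$ is supported. Once these are in place (they are all standard and available in the references already cited for AS-regular algebras), the proof reduces to the elementary two-variable count above.
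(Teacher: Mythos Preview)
Your argument is correct. The paper itself does not prove this lemma: it is quoted verbatim from Minamoto--Mori \cite[Proposition~4.4]{MM} and used as a black box in the proof of Proposition~\ref{ExtVani}. So there is no ``paper's own proof'' to compare against here.

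That said, your route is essentially the standard one and matches how such vanishing is obtained in the noncommutative projective geometry literature (including in \cite{MM}): pass from $D^{b}(\mathrm{tail}\,\Lambda)$ back to $D(\mathrm{grmod}\,\Lambda)$ via the derived adjunction $\pi\dashv R\omega$, use the fundamental triangle $R\underline{\Gamma}_{\mathfrak m}(N)\to N\to R\omega\pi(N)\to$, and invoke the AS-Gorenstein local duality to see that the only possibly nonvanishing higher cohomology of $R\omega\pi\,\Lambda$ sits in degree $d-1$ and is supported in internal degrees $\leq -a$. Your degree count $i-j\leq a-1<(m+1)a$ then finishes it; note in passing that the same count already works for $m\geq 0$, which recovers the fact that $\pi\Lambda(0),\dots,\pi\Lambda(a-1)$ form an exceptional sequence. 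The one cosmetic point to tighten is to state explicitly that the Artin--Zhang machinery (the $\chi$-condition, finiteness of cohomological dimension of $\underline{\Gamma}_{\mathfrak m}$, and local duality) holds in the $\Lambda_{0}$-semisimple AS-regular setting used here; this is covered in \cite{MM}, so a pointer suffices.
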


\begin{lemma}[\cite{MU2}]\label{Torsion}
Let $A$ be a Noetherian AS-regular algebra, and $e$ be an idempotent of $A$ such that
$eAe$ is a Noetherian graded algebra. If
$
(-)\otimes_A Ae: \mathrm{tails}\, A \rightarrow \mathrm{tails}\, eAe
$
is an equivalence functor, then so is
$(-)\otimes_A Ae: D^{b}(\mathrm{tails}\,A) \rightarrow D^{b}(\mathrm{tails}\, eAe)$.
\end{lemma}

With these two lemmas, we are able to show the following.

\begin{lemma}\label{VaniExt1}
Let $E_Q$ be given by Definition \ref{def:EQ}.
For $r<-2$, we have $$
\mathrm{Hom}^{r}_{D_{sg}^{gr}(R^{T})}(E_{Q}, E_{Q}) = 0.
$$
\end{lemma}

\begin{proof}
We have the following isomorphisms
\begin{eqnarray*}
 &&\mathrm{Hom}^{r}_{D_{sg}^{gr}(R^{T})}(E_{Q}, E_{Q})\\
 && \cong
\mathrm{Hom}^{r}_{D^{b}(\mathrm{tails}\, R^{T})}(\Phi E_{Q}, \Phi E_{Q}) \\
 &&\cong  \mathrm{End}^{r}_{D^{b}(\mathrm{tails}\, R^{T})}\left(\pi \left(L_{e \Lambda e}
 \left( \Big(\bigoplus^{n-1}_{i = 1}\big(\Omega_{\Lambda}^{i}\big((1-e)\Lambda_0\big)(i)\big) e\Big)
 \oplus (1 - e)\Lambda e\right)\right)\right)\\
 &&\cong  \mathrm{End}^{r}_{D^{b}(\mathrm{tails}\, \Lambda)}
 \left(\pi\left(L_{e \Lambda}\left(\Big(\bigoplus^{n-1}_{i = 1}
 \Omega_{\Lambda}^{i}\big((1-e)\Lambda_0\big)(i) \Big)\oplus (1 - e)\Lambda\right)\right)\right), 
\end{eqnarray*}
where 
the second isomorphism holds by
Lemma \ref{lemma:Tocom},
and the third isomorphism holds by
Lemma \ref{Torsion}.
Since $\Lambda$ is a
graded locally finite-dimensional algebra such that $\Lambda_{0}$ is a direct sum of $k$,
by \cite{KS} again (see Definition \ref{def:minimalapprox}
and the paragraph following it) the minimal left $\mathrm{add}(e \Lambda)$-approximation
exists and is unique up to isomorphism for any graded $\Lambda$-module.

Recall from the previous subsection that
$P^\bullet$ is a  bounded graded projective $\Lambda$-module resolution
of $(1-e)\Lambda_0$ (see \eqref{def:Presol}).
Since
$$
\mathrm{Hom}^{j}_{D^{b}(\mathrm{grmod}\, \Lambda)}(P^{\bullet}_{-i}(i), e \Lambda) =
\mathrm{Hom}^{j+i}_{D^{b}(\mathrm{grmod}\, \Lambda)}(P^{\bullet}(i), e \Lambda) = 0
$$
for any $j \geq 1$, there is a morphism $q_{-i}:  \Omega_{\Lambda}^{i}\big((1-e)\Lambda_0\big)
(i) \rightarrow e \Lambda^{\oplus m_{i}}$
for some $m_{i} \in\mathbb{N}$ such that $L_{e \Lambda}\left(
\Omega_{\Lambda}^{i}\big((1-e)\Lambda_0\big)
(i)\right)$
is equal to
$$
0 \rightarrow (1-e)\Lambda (-n) \rightarrow P^{1-n}(-n+1)
\rightarrow \cdots \rightarrow \Omega_{\Lambda}^{i}\big((1-e)\Lambda_0\big) 
\xrightarrow{q_{-i}} e \Lambda^{\oplus m_{i}} \rightarrow 0
$$
in $D^{b}(\mathrm{grmod}\, \Lambda)$, which we denote by $\widetilde{P^{\bullet}_{-i}}$.

By Lemmas \ref{Depth2} and \ref{VaniExt}, we have 
$$
\mathrm{End}^{r}_{D^{b}(\mathrm{tails}\,\Lambda)}\Big(\pi\big(\widetilde{P^{\bullet}_{-i}(i)}\big) \Big)=
\mathrm{End}^{r}_{D^{b}(\mathrm{grmod}\, \Lambda)}\Big(\widetilde{P^{\bullet}_{-i}(i)}\Big)
$$
for $n-1 \geq i \geq 2$ and therefore
\begin{align*}
&\mathrm{End}^{r}_{D^{b}(\mathrm{tails}\, \Lambda)}\left(\pi\left(\bigoplus^{n-1}_{i = 2}
 \widetilde{P^{\bullet}_{-i}(i)} \oplus \bigoplus^{1}_{i = 0}(1 - e)\Lambda (i)\right)\right)\\
&=\mathrm{End}^{r}_{D^{b}(\mathrm{grmod}\, \Lambda)}\left(\bigoplus^{n-1}_{i = 2}
\widetilde{P^{\bullet}_{-i}(i)} \oplus \bigoplus^{1}_{i = 0}(1 - e)\Lambda (i)\right).
\end{align*}
Since $\Omega_{\Lambda}^{1}\big((1-e)\Lambda_0\big)
(1) \cong (1 - e)\Lambda (1)$ in $\mathrm{tails}\,\Lambda$,
the above equality implies that
\begin{align*}
&\mathrm{End}^{r}_{D^{b}(\mathrm{tails}\,\Lambda)}\left(\pi \left(L_{e \Lambda}
\left(\Big(\bigoplus^{n-1}_{i = 1}\Omega_{\Lambda}^{i}\big((1-e)\Lambda_0\big)(i)\Big) 
\oplus (1 - e)\Lambda\right)\right)\right)\\
& =
\mathrm{End}^{r}_{D^{b}(\mathrm{grmod}\, \Lambda)}\left( \bigoplus^{n-1}_{i = 2}
\widetilde{P^{\bullet}_{-i}(i)} \oplus \bigoplus^{1}_{i = 0}(1 - e)\Lambda (i)\right).
\end{align*}
Since
the homologies of the complex $\bigoplus^{n-1}_{i = 2}
\widetilde{P^{\bullet}_{-i}(i)} \oplus \bigoplus^{1}_{i = 0}(1 - e)\Lambda (i)$
are concentrated in degree $0$ and $1$,
we thus get
$$
\mathrm{End}^{r}_{D^{b}(\mathrm{grmod}\, \Lambda)}\left(\bigoplus^{n-1}_{i = 2}
\widetilde{P^{\bullet}_{-i}(i)} \oplus \bigoplus^{1}_{i = 0}(1 - e)\Lambda (i)\right) = 0
$$
for $r \leq -2$.
\end{proof}

\begin{lemma}\label{VaniExt2}
Let $E_Q$ be given by Definition \ref{def:EQ}.
For $r\ge 1$, we have $$
\mathrm{Hom}^{r}_{D_{sg}^{gr}(R^{T})}(E_{Q}, E_{Q}) = 0.
$$
\end{lemma}

\begin{proof}
We consider the following vector space
$$
\mathrm{Hom}^{0}_{D^{b}(\mathrm{grmod}\, \Lambda)}\left(\bigoplus^{n-1}_{i = 2}
\widetilde{P^{\bullet}_{-i}(i)} \oplus \bigoplus^{1}_{i = 0}(1 - e)\Lambda (i), \bigoplus^{n-1}_{i = 2}
\widetilde{P^{\bullet}_{-i}(i)} \oplus \bigoplus^{1}_{i = 0}(1 - e)\Lambda (i)[r]\right).
$$
By the fact that
$$
\mathrm{Hom}^{r}_{D^{b}(\mathrm{grmod}\, \Lambda)}(1-e, \Lambda(j)) = 0
$$
for either  $r \neq n-1$ and $j \in \mathbb{Z}$, or $r=n-1$ and $j\ne -n$,
and by the first condition in Definition \ref{def:minimalapprox}, we have that
these vector spaces
$$
\mathrm{Hom}^{0}_{D^{b}(\mathrm{grmod}\, \Lambda)}\left(\bigoplus^{n-1}_{i = 2}
\widetilde{P^{\bullet}_{-i}(i)} \oplus \bigoplus^{1}_{i = 0}(1 - e)\Lambda (i), \bigoplus^{n-1}_{i = 2}
\widetilde{P^{\bullet}_{-i}(i)} \oplus \bigoplus^{1}_{i = 0}(1 - e)\Lambda (i)[r]\right)
$$
are all trivial.
\end{proof}

\begin{lemma}\label{VaniExt3}
Let $E_Q$ be given by Definition \ref{def:EQ}.
For $r=-1$, we have $$
\mathrm{Hom}^{r}_{D_{sg}^{gr}(R^{T})}(E_{Q}, E_{Q}) = 0.
$$
\end{lemma}
\begin{proof}
We need to show
$$
\mathrm{Hom}^{-1}_{D^{b}(\mathrm{grmod}\, \Lambda)}\left(\bigoplus^{n-1}_{i = 2}
\widetilde{P^{\bullet}_{-i}(i)} \oplus \bigoplus^{1}_{i = 0}(1 - e)\Lambda (i), \bigoplus^{n-1}_{i = 2}
\widetilde{P^{\bullet}_{-i}(i)} \oplus \bigoplus^{1}_{i = 0}(1 - e)\Lambda (i)[r]\right)
$$
is trivial.

First, since the homologies of the complex $\bigoplus^{n-1}_{i = 2} \widetilde{P^{\bullet}_{-i}(i)}
\oplus \bigoplus^{1}_{i = 0}(1 - e)\Lambda (i)$ are
concentrated in degree $0$ and $1$, we have
$$
\mathrm{Hom}^{-1}_{D^{b}(\mathrm{grmod}\, \Lambda)}\left(\bigoplus^{1}_{i = 0}(1 - e)
\Lambda (i), \bigoplus^{n-1}_{i = 2} \widetilde{P^{\bullet}_{-i}(i)} 
\oplus \bigoplus^{1}_{i = 0}(1 - e)\Lambda (i)\right) = 0.
$$
Second, consider the vector spaces
\[
\mathrm{Hom}^{-1}_{D^{b}(\mathrm{grmod}\, \Lambda)}\big(\widetilde{P^{\bullet}_{-i}(i)}, \widetilde{P^{\bullet}_{-j}(j)}\big).
\]
Observe that they are isomorphic to the vector space of the following morphisms of chain complexes
\begin{equation*}\label{diag:sencond}
\xymatrixcolsep{2.5pc}
\xymatrixrowsep{1.5pc}
\xymatrix{
0 \ar[d]  \ar[r]
& \Omega^{i}_{\Lambda}\big((1-e)\Lambda_0\big)(i) 
\ar[r]^-{q_{-i}} \ar[d] & e \Lambda^{\oplus m_{i}} \ar[r] \ar[d]^{f} & 0 \ar[d] \\
0 \ar[r] & 0 \ar[r] & \Omega^{j}_{\Lambda}\big((1-e)\Lambda_0\big)(j) 
\ar[r]^-{q_{-j}} & e \Lambda }
\end{equation*}
in the bounded chain complex
category $C^{b}(\mathrm{grmod}\, \Lambda)$, for any $i, j =2,\cdots, n-1$.
Furthermore, the above diagram is contained in the following commutative diagram of chain complexes
\begin{equation*}\label{diag:third}
\xymatrixcolsep{2.5pc}
\xymatrixrowsep{1.5pc}
\xymatrix{
0 \ar[d]  \ar[r]
& P^{-i}(i) \ar[r]^-{\hat{q}_{-i}} \ar[d] & e \Lambda^{\oplus m_{i}} \ar[r] \ar[d]^{g} & 0 \ar[d] \\
0 \ar[r] & 0 \ar[r] & P^{-j+1}(j) \ar[r] & 0 }
\end{equation*}
in $C^{b}(\mathrm{grmod}\, \Lambda)$, for any $i, j=2,\cdots, n-1$,
where $\hat{q}_{-i}$ is given by
composing $q_{-i}$ with
$$
P^{-i}(i) \twoheadrightarrow \Omega^{i}_{\Lambda}\big((1-e)\Lambda_0\big)(i),
$$
and $g$ is given by composing $f$ with
$$
\Omega^{j}_{\Lambda}\big((1-e)\Lambda_0\big)(j) \hookrightarrow P^{-j+1}(j).
$$
By the definition
of $\Lambda$, the idempotent element $e$ corresponds to the $R^{T}$-module
$M^{G}_{R}(V_l)$, where $l$ is the minimal integer in $\mathcal{L}$ in Proposition \ref{SVT}.

Without loss of generality, we choose two indecomposable summands $e' \Lambda (-1) \in P^{-i}(i)$
and $e{''} \Lambda (1) \in P^{-j+1}(j)$ respectively.
By the argument after Lemma \ref{Equi}, we have that any morphism from
$e \Lambda ^{\oplus m_{i}}$ to $e{''} \Lambda (1)$
is given by some linear combination of $\{x_{j_{1}}, x_{j_2}, \cdots, x_{j_{s}}\}$
which is the subset of $\{x_{1}, x_{2}, \cdots, x_{n}\}$ consisting of elements whose weights are all negative.
Similarly,
any morphism from $e' \Lambda (-1)$ to $e \Lambda^{\oplus m_{i}}$
is given by some linear combinations of $\{x_{i_{1}}, x_{i_2}, \cdots, x_{i_{t}}\}$
which is the subset of $\{x_{1}, x_{2}, \cdots, x_{n}\}$ consisting of elements whose weights are all positive.

Thus we can write the morphism
\begin{equation*}
e \Lambda^{\oplus m_{i}} \stackrel{g}\rightarrow
P^{-j+1}(j) \twoheadrightarrow e'' \Lambda (1)
\end{equation*}
as
$
\big(\sum^{s}_{r=1} a^{1}_{r}x_{j_{r}}, \cdots, \sum^{s}_{r=1} a^{m_i}_{r}x_{j_{r}}\big),
$
where $x_{j_{r}} \in \{x_{j_{1}}, x_{j_2}, \cdots, x_{j_{s}}\}$ and
$a^{1}_{r},\cdots, a_r^{m_i} \in k$.
In the same way,
we can write the morphism
\begin{equation*}
  e{''} \Lambda (-1)\hookrightarrow
  P^{-i}(i) \stackrel{\hat{q}_{-i}}\longrightarrow e \Lambda^{\oplus m_{i}}
\end{equation*}
as
$
\big(\sum^{t}_{r=1} b^{1}_{r}x_{i_{r}}, \cdots, \sum^{t}_{r=1} b^{m_{i}}_{r}x_{i_{r}}\big),
$
where $x_{i_{r}} \in \{x_{i_{1}}, x_{i_2}, \cdots, x_{i_{t}}\}$ and $b^{1}_{r},\cdots, b_r^{m_i} \in k$.
Therefore we have
$$
\sum^{m_{i}}_{u=1}\left(\Big(\sum^{s}_{r=1} a^{u}_{r}x_{j_{r}}\Big)\Big(\sum^{t}_{r=1} b^{u}_{r}x_{i_{r}}\Big)\right) = 0
$$
in $R$. The above equality can be written in the form
$$
\sum^{j_s}_{r=1} x_{j_r} f_{j_r} = 0,
$$
where $f_{j_r}$ is a linear combination of $\{x_{i_{1}}, x_{i_2}, \cdots, x_{i_{t}}\}$ for any $j_r$.
However, since $x_{j_1} \nmid \sum^{j_s}_{r=2} x_{j_r} f_{j_r}$,
the above equality is impossible unless all $f_{j_r}=0$.
In fact, we have
$$
x_{j_1} \notin (x_{j_2}, \cdots, x_{j_s}, x_{i_{1}}, x_{i_2}, \cdots, x_{i_{t}}).
$$
Thus we obtain that
\[
\mathrm{Hom}^{-1}_{D^{b}(\mathrm{grmod}\, \Lambda)}\big(\widetilde{P^{\bullet}_{-i}(i)}, \widetilde{P^{\bullet}_{-j}(j)}\big)=0.
\]
In the completely analogous way, we have 
$$
\mathrm{Hom}^{-1}_{D^{b}(\mathrm{grmod}\,\Lambda)}\Big(\bigoplus^{n-1}_{i = 2}
\widetilde{P^{\bullet}_{-i}(i)}, \bigoplus^{1}_{i = 0}(1 - e)\Lambda (i)\Big) = 0,
$$
and therefore we have
\[
\mathrm{Hom}^{-1}_{D^{b}(\mathrm{grmod}\,\Lambda)}\Big(\bigoplus^{n-1}_{i = 2}
\widetilde{P^{\bullet}_{-i}(i)} \oplus \bigoplus^{1}_{i = 0}(1 - e)\Lambda (i),
\bigoplus^{n-1}_{i = 2} \widetilde{P^{\bullet}_{-i}(i)} \oplus \bigoplus^{1}_{i = 0}(1 - e)\Lambda (i)\Big) = 0.
\qedhere\]
\end{proof}

Combining the above Lemmas \ref{VaniExt1},
\ref{VaniExt2} and \ref{VaniExt3} we get the following.

\begin{proposition}\label{ExtVani}
Let $E_Q$ be given by Definition \ref{def:EQ}.
Then we have
$
\mathrm{Hom}^{r}_{D_{sg}^{gr}(R^{T})}(E_{Q}, E_{Q}) = 0
$
for any $r \neq 0$.
\end{proposition}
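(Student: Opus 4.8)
The plan is to push the computation out of $D^{gr}_{sg}(R^{T})$, first into $D^{b}(\mathrm{tail}\,R^{T})$ via the Orlov embedding $\Phi$ and then into $D^{b}(\mathrm{tail}\,\Lambda)$ via the idempotent functor $(-)e$, where the Artin--Schelter regularity of $\Lambda$ supplies the required vanishing. First, since $\Phi$ is fully faithful and triangulated (Lemma \ref{Orlovembed}), one has $\mathrm{Hom}^{r}_{D^{gr}_{sg}(R^{T})}(E_{Q},E_{Q})\cong\mathrm{Hom}^{r}_{D^{b}(\mathrm{tail}\,R^{T})}(\Phi E_{Q},\Phi E_{Q})$ for every $r$, and by Lemma \ref{lemma:Tocom} together with \eqref{eq:EQ} we may take $\Phi E_{Q}=\pi(L_{e\Lambda e}M')$ with $M'=\bigoplus_{i=1}^{n-1}(\Omega^{i}_{\Lambda}(1-e)(i))e\oplus(1-e)\Lambda e$. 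By definition $L_{e\Lambda e}M'$ is the cone of the minimal left approximation $f\colon M'\to P$ by graded projective $e\Lambda e$-modules, so $P$ is a finite direct sum of twists of $e\Lambda e=(e\Lambda)e$, and applying $\pi$ gives a triangle $\pi M'\xrightarrow{\pi f}\pi P\to\pi(L_{e\Lambda e}M')\to\pi M'[1]$ in $D^{b}(\mathrm{tail}\,R^{T})$. By Lemmas \ref{Sing} and \ref{Torsion} the functor $(-)e\colon D^{b}(\mathrm{tail}\,\Lambda)\to D^{b}(\mathrm{tail}\,R^{T})$ is an equivalence, and since $\pi M'=(\pi\widetilde{M'})e$ with $\widetilde{M'}=\bigoplus_{i=1}^{n-1}\Omega^{i}_{\Lambda}(1-e)(i)\oplus(1-e)\Lambda$ and $\pi P=(\pi\widetilde{P})e$ with $\widetilde{P}$ a finite direct sum of twists of the projective right $\Lambda$-module $e\Lambda$, this triangle is, up to isomorphism, the image of a triangle $\pi\widetilde{M'}\xrightarrow{g}\pi\widetilde{P}\to\widetilde{L}\to\pi\widetilde{M'}[1]$ in $D^{b}(\mathrm{tail}\,\Lambda)$. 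Thus it suffices to prove $\mathrm{Hom}^{r}_{D^{b}(\mathrm{tail}\,\Lambda)}(\widetilde{L},\widetilde{L})=0$ for $r\neq0$.

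Next, I would reduce to twists of $\pi\Lambda$. Since $(1-e)\Lambda_{0}$ is finite dimensional, its minimal graded projective $\Lambda$-resolution $P^{\ast}$ — of length $n-1$, with each $P^{-j}$ a direct sum of summands of $\Lambda(-j-1)$ and $\Lambda(-j)$, as recalled in the proof of Lemma \ref{claim1} — becomes acyclic after applying $\pi$, so stupid truncation represents each $\pi(\Omega^{i}_{\Lambda}(1-e)(i))$ by the finite complex built from $P^{\ast}(i)$, whose terms are summands of $\pi\Lambda(s)$ for $s$ in a bounded range; similarly $\pi\widetilde{P}$ is a sum of summands of twists $\pi\Lambda(s)$. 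Inserting these presentations and the triangle above into the long exact sequences for $\mathrm{Hom}^{\ast}_{D^{b}(\mathrm{tail}\,\Lambda)}(\widetilde{L},\widetilde{L})$ reduces the computation to finitely many groups $\mathrm{Hom}^{q}_{D^{b}(\mathrm{tail}\,\Lambda)}(\pi\Lambda(s),\pi\Lambda(s'))$ together with the degree-$0$ connecting maps of those sequences.

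Finally, the vanishing. By Proposition \ref{ToAS}, $\Lambda$ is AS-regular of dimension $n-1$ with Gorenstein parameter $n$, and $\mathrm{Hom}^{q}_{D^{b}(\mathrm{tail}\,\Lambda)}(\pi\Lambda(s),\pi\Lambda(s'))=0$ for $q\neq0$ in each of the cases that occur: when $0\le s'-s\le n-1$ (non-negative twists have no higher cohomology); when $-(n-1)\le s'-s<0$ with both twists inside the range of Orlov's semi-orthogonal decomposition (Lemma \ref{Orlovembed}, using $D^{gr}_{sg}(\Lambda)=0$), and the remaining such pairs by the Serre duality available on $D^{b}(\mathrm{tail}\,\Lambda)$; and when $|s'-s|\ge n$, where one reduces both twists modulo $n$ into $[0,n-1]$ and applies Lemma \ref{VaniExt} (again combined with Serre duality for the negative direction). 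Feeding these vanishings, and the surjectivity of the relevant degree-$0$ connecting maps coming from property~(1) of the minimal left approximation $f$ in Definition \ref{def:minimalapprox} (exactly as in Cases 2--5 of the proof of Claim \ref{claim33}), into the long exact sequences of the previous paragraph leaves only $\mathrm{Hom}^{r}_{D^{b}(\mathrm{tail}\,\Lambda)}(\widetilde{L},\widetilde{L})=0$ for $r\neq0$, which is the assertion.

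The main obstacle I expect is the degree bookkeeping in the last two paragraphs together with the control of the connecting maps attached to the cone $\widetilde{L}=\mathrm{cone}(g)$: one must verify that every $\mathrm{Hom}$-group actually produced by the long exact sequences has its pair of internal twists in a range covered by one of the three mechanisms above, and one must ensure that the potential degree $\pm1$ self-extensions of $\widetilde{L}$ — not merely the ``interior'' ones — are killed. It is precisely here that the \emph{minimality} of $f\colon M'\to P$, and not just its being a left $e\Lambda e$-approximation, becomes indispensable, just as in the case analysis of Claim \ref{claim33}.
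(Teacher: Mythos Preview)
Your overall strategy---push the computation through $\Phi$ into $D^{b}(\mathrm{tail}\,R^{T})$ and then through $(-)e$ into $D^{b}(\mathrm{tail}\,\Lambda)$---matches the paper's, and your reduction to the complexes built from $P^{\ast}$ together with the approximation target is the right shape. The cases $r\le -2$ and $r\ge 1$ can indeed be handled by the general AS-regular facts you cite (this is essentially the paper's Cases~1 and~2).

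The genuine gap is at $r=-1$. Your three mechanisms (vanishing of higher cohomology for non-negative twists, the semi-orthogonal decomposition, Lemma~\ref{VaniExt} with Serre duality) together with property~(1) of the minimal left approximation do \emph{not} by themselves kill the cross-terms
\[
\mathrm{Hom}^{-1}_{D^{b}(\mathrm{grmod}\,\Lambda)}\big(\widetilde{P^{\ast}_{-i}(i)},\,\widetilde{P^{\ast}_{-j}(j)}\big)
\]
for $2\le i,j\le n-1$. In the paper this step is not ``bookkeeping'': it uses a structural feature of this particular $\Lambda$ that your proposal never invokes, namely that the idempotent $e$ corresponds to the \emph{minimal} weight in $\mathcal{L}$ (Notation~\ref{def:e}). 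Concretely, a degree~$-1$ chain map produces a morphism $g\colon e\Lambda^{\oplus m_i}\to P^{-j+1}(j)$ whose composition with the component $e'\Lambda(-1)\hookrightarrow P^{-i}(i)\xrightarrow{\hat q_{-i}} e\Lambda^{\oplus m_i}$ must vanish. Because $e$ sits at the extreme weight, the arrows $e'\Lambda(-1)\to e\Lambda$ are linear in the positive-weight variables and the arrows $e\Lambda\to e''\Lambda(1)$ are linear in the negative-weight variables; the vanishing of the composite then reads as an identity $\sum_{r} x_{j_r} f_{j_r}=0$ in $R$ with $x_{j_r}$ ranging over one sign class and $f_{j_r}$ a linear form in the other, which forces each $f_{j_r}=0$. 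This polynomial argument is what actually kills $\mathrm{Hom}^{-1}$; it is specific to the toric setup and to the choice of $e$, and it is not a consequence of minimality of the approximation (minimality controls maps \emph{into} $e\Lambda$, not maps \emph{out of} the approximation target into other syzygies). Your reference to ``Cases~2--5 of Claim~\ref{claim33}'' does not supply this: Claim~\ref{claim33} concerns $\mathrm{Hom}$ into $e\Lambda e(j)$ for $j\le 0$, a different target, and its Case~5 uses a different mechanism (the equivalence of Lemma~\ref{Sing} plus the minimality of $P^{\ast}$, not the weight dichotomy).

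So the plan is sound up to $r=-1$, but to close that case you must bring in the explicit description of degree-one morphisms in $\Lambda$ via the sign of the torus weights and the extremality of $e$; without it the long exact sequences leave a residual $\mathrm{Hom}^{-1}$ that your listed tools cannot exclude.
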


\begin{proof}[Proof of Theorem \ref{MainTh} (when $G=T$)]
By Propositions \ref{fully} and \ref{ExtVani},
$E_{Q}$ is a
tilting object in $D^{gr}_{sg}(R^{T})$, from which
the theorem follows.
\end{proof}

\section{Proof of Theorem \ref{MainTh}: the second case}\label{sect:proofofmain2}

Now we investigate the case when $G$ is the product of the one-dimensional
torus with a finite abelian group.
By \cite[Lemma 3.15]{SV}, an NCCR  can be constructed for the case where $G$
is not a connected group if
an NCCR exists for the connected component $G_{0}$ containing the identity
element.
Based on this lemma, we construct an NCCR of $R^{G}$ as follows.
Let
$$
\Lambda : = \mathrm{End}_{\mathrm{mod} \,R^{T}}\Big(\bigoplus_{\chi \in \mathcal{L}} M^{T}_{R}(V_{\chi})\Big)
\cong \mathrm{End}_{\mathrm{mod}(R, T)}\Big(\bigoplus_{\chi \in \mathcal{L}} R \otimes V_{\chi}\Big)
$$
be the $\mathrm{NCCR}$ of $R^{T}$ constructed in \S3,
where $T=k^\times \subset G$ is the maximal one-dimensional torus containing the identity
element in $G$. Let $H: = G/T$, which is a finite abelian group in $\mathrm{SL}(N, k)$.
Set
$$
\Lambda' : = \mathrm{End}_{\mathrm{mod} \,R^{G}}\Big(\bigoplus_{\chi \in \mathcal{L}} M^{G}_{R}(U_{\chi})\Big)
\cong \mathrm{End}_{\mathrm{mod}(R, G)}\Big(\bigoplus_{\chi \in \mathcal{L}}R \otimes U_{\chi}\Big),
$$
which is an $\mathrm{NCCR}$ of $R^{G}$, where
$U_{\chi} : = \mathrm{Ind}_{T}^{G}(V_{\chi})$. We have
$$
\Lambda' \cong M^{G}_{R}\Big(\mathrm{End}_{\mathrm{mod}(k,G)}\big(\mathrm{Ind}_{T}^{G}
(\bigoplus_{\chi \in \mathcal{L}} V_{\chi})\big)\Big).
$$
For any $\chi, \chi' \in \mathcal{L}$, we have
\begin{align*}
\mathrm{Hom}_{\mathrm{mod}(k, G)}(U_{\chi}, U_{\chi'}) & \cong
 \mathrm{Hom}_{\mathrm{mod}(k, G)}
 \big(\mathrm{Ind}_{T}^{G}(V_{\chi}), \mathrm{Ind}_{T}^{G}(V_{\chi'})\big)\\
 &\cong  \mathrm{Hom}_{\mathrm{mod}(k, T)}
 \big(\mathrm{Ind}_{T}^{G}(V_{\chi}), V_{\chi'}\big),
\end{align*}
where $\mathrm{Ind}_{T}^{G}(V_{\chi})$ is
considered as a $T$-representation in the last term of above equality.
At the same time, the $T$ representation $\mathrm{Ind}_{T}^{G}(V_{\chi})$ is
isomorphic to $kH \otimes V_{\chi}$, whose $T$-action is induced from $V_{\chi}$.
Thus we have
$$
\Lambda' \cong kH \otimes \Lambda,
$$
and its multiplicative structure is given by those of $kH$ and $\Lambda$.

Next, we endow $\Lambda'$ with a canonical grading from $R$
such that $\mathrm{deg}(h \otimes 1) = 0$ for any $h \in H$.
Then we obtain the following two
propositions and a lemma, whose proofs are completely analogous
to the case when $G$ is the one-dimensional torus, and hence are omitted:

\begin{proposition}[Compare with Proposition \ref{ToGo}]\label{ToGo2}
$R^{G}$ is a Noetherian graded Gorenstein algebra of dimension $n-1$
with Gorenstein parameter $n$.
\end{proposition}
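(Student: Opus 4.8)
The plan is to imitate the proof of Proposition \ref{ToGo} step by step, treating the finite abelian factor $H = G/T$ only at the points where it interacts with the canonical bundle.

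The dimension count is immediate: since $H$ is finite, $\dim G = \dim T = 1$, hence $\dim(R^{G}) = \dim(\mathrm{Spec}(R)) - \dim(G) = n-1$, exactly as before. For the Gorenstein property I would invoke the same result of \v{S}penko and Van den Bergh that underlies Lemma \ref{gorenstein}, namely \cite[\S1.6]{SV} and \cite[Theorem 8.9]{V2}, which applies to an arbitrary abelian reductive group acting generically and unimodularly, not only to the one-dimensional torus. By hypothesis $k^{n}$ is a generic unimodular representation of $G$ (the torus factor acts effectively in the sense of Definition \ref{Defchi} and $H \subset \mathrm{SL}(n,k)$), so these hypotheses hold and $R^{G}$ is Gorenstein; Noetherianness and finite injective dimension on both sides are inherited from $R$. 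Alternatively one may write $R^{G} = (R^{T})^{H}$ and argue that the finite group $H \subset \mathrm{SL}$ acts on the Gorenstein ring $R^{T}$ fixing its canonical module.

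For the Gorenstein parameter, by Orlov \cite[Corollary 25 \& Proposition 28]{DR2} it equals the unique $r$ with $\mathcal{O}_{X}(-r) \cong \omega_{X}$ for $X = (\mathrm{Spec}(R^{G}) \setminus \{ 0 \})/k^{\times}$, so it suffices to show the anti-canonical divisor on $X$ has degree $n$. Exactly as in Proposition \ref{ToGo}, one identifies $X$ with $(((\mathrm{Spec}(R) \setminus \{ 0 \})/k^{\times}) \setminus H_{0})/G$, where $H_{0}$ is the union of coordinate subspaces determined by the signs of the torus weights (defined verbatim as in that proof), and sets $Y := ((\mathrm{Spec}(R) \setminus \{ 0 \})/k^{\times}) \setminus H_{0}$, an open subset of $\mathbb{P}^{n-1}$ on which the anti-canonical section $s_{Y} = x_{1}x_{2}\cdots x_{n}$ has degree $n$. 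The key point is that the $G$-action on the line $k \cdot x_{1}x_{2}\cdots x_{n}$ is trivial: it is trivial for $T$ by Definition \ref{Defchi}(2), and for $H$ because $H$ acts diagonally on $R$ with $\prod_{i}\zeta_{i} = 1$, $H$ being unimodular. Hence, applying \cite[Lemma 4.7]{SV2} with $G$ in place of $T$ (here the Lie algebra of $G$ is $\mathfrak{t}$, since $H$ is finite), the section $s_{Y}$ is $G$-equivariant, descends to a section $s_{X}$ of $\omega^{-1}_{X}$, and its degree is preserved; thus the Gorenstein parameter of $R^{G}$ equals $n$.

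The main obstacle is the descent of the anti-canonical section through the finite quotient by $H$: one must verify that passing to $H$-invariants contributes no ramification term that would alter the degree. This is precisely what is controlled by the containment $H \subset \mathrm{SL}(n,k)$ (so $H$ acts trivially on $\omega$, by the same mechanism that \cite[Lemma 4.7]{SV2} records for the torus) together with the fact that $H$ fixes $x_{1}x_{2}\cdots x_{n}$; granting this, the argument is formally identical to that of Proposition \ref{ToGo}, which is why the authors state it is omitted.
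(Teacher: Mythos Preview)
Your proposal is correct and is precisely the approach the paper intends: the authors omit the proof, saying it is ``completely analogous'' to that of Proposition \ref{ToGo}, and you have carried out exactly that analogy, with the only new input being that the finite factor $H\subset\mathrm{SL}(n,k)$ acts trivially on the line $k\cdot x_1\cdots x_n$ so the descent of $s_Y$ goes through unchanged. Your renaming of the coordinate-subspace locus to $H_0$ to avoid the notational clash with $H=G/T$ is a sensible precaution.
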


\begin{theorem}[Compare with Theorem \ref{ToAS}]\label{ToAS2}
$\Lambda'$ is an AS-regular algebra of dimension $n-1$ with Gorenstein parameter $n$.
\end{theorem}

\begin{theorem}[Compare with Theorem \ref{Sing}]\label{Sing2}
Let $R = k[x_{1}, x_{2}, \cdots, x_{n}]$, and $T$ be the maximal one-dimensional torus in $G$
such that $T$ contains the identity element of $G$, which acts
on $R$ with weights $\chi = (\chi_{1}, \chi_{2}, \cdots, \chi_{n})$.
If the action of $G$ on $V$ is generic and unimodular,
then the following are equivalent:
\begin{enumerate}
\item[$(1)$] $\mathrm{gcd}(\chi_i,\chi_j)=1$ whenever
$\chi_i\chi_j<0$ for all $1\le i,j\le n$;

\item[$(2)$] the functor $( - ) e': \mathrm{tails}\,
\Lambda'\rightarrow \mathrm{tails}\, R^{G}$ is an equivalence functor
with inverse $( - ) \otimes_{e' \Lambda e'} e' \Lambda$,
where $e' :=\left(\frac{1}{|H|} \sum_{h\in H} h\right) \otimes e$
and $e \in \Lambda$ is the idempotent corresponding to $R^T$,
which is a direct summand of
$\bigoplus_{\chi \in \mathcal{L}} M^{T}_{R}(V_{\chi})$;

\item[$(3)$] $R^{G}$ a is graded isolated singularity;

\item[$(4)$] $\mathrm{Spec}(R^{G})$ has a unique isolated singularity at the origin.
\end{enumerate}
\end{theorem}

By Proposition \ref{ToAS2}, we have an object
$$
E'_{Q} = \nu\left( L_{e' \Lambda' e'}\left(\Big(\bigoplus^{n-1}_{i = 1}
\big(\Omega_{\Lambda'}^{i} (1-e')(i)\big) e'\Big) 
\oplus (1 - e')\Lambda' e'\right)\right)
$$
in $D_{sg}^{gr}(R^{G})$.

By Proposition \ref{ToGo2} and Theorems \ref{ToAS2} and \ref{Sing2},
we obtain the following two propositions,
whose proofs are again analogous and hence are omitted:

\begin{proposition}[Compare with Proposition \ref{fully}]\label{fully2}
Under the assumptions of Theorem \ref{MainTh}, in the second case, we have
$
D_{sg}^{gr}(R^{G}) = \mathrm{thick}(E'_{Q})
$.
In other words, $E'_Q$
generates $D_{sg}^{gr}(R^{G})$ itself.
\end{proposition}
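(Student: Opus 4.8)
The plan is to transcribe the proof of Proposition~\ref{fully} to the present setting, systematically replacing $\Lambda$, $e$, $R^{T}$ and $E_{Q}$ by $\Lambda'$, $e'$, $R^{G}$ and $E'_{Q}$, and invoking Propositions~\ref{ToGo2} and~\ref{ToAS2} and Lemma~\ref{Sing2} in place of Propositions~\ref{ToGo} and~\ref{ToAS} and Lemma~\ref{Sing}. First I would apply the Orlov embedding of Lemma~\ref{Orlovembed} to the AS-regular algebra $\Lambda'$ of dimension $n-1$ with Gorenstein parameter $n$, obtaining $D^{b}(\mathrm{tail}\,\Lambda')=\mathrm{thick}(\pi(\bigoplus_{i=-n+1}^{0}\Lambda'(i)))$. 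The equivalence $(-)e'\colon \mathrm{tail}\,\Lambda'\to\mathrm{tail}\,R^{G}$ of Lemma~\ref{Sing2}, upgraded to derived categories by Lemma~\ref{Torsion}, transports this to $D^{b}(\mathrm{tail}\,R^{G})=\mathrm{thick}(\pi(\bigoplus_{i=-n+1}^{0}\Lambda'e'(i)))$, and applying the projection $\mu$ onto the right admissible piece $\Phi D_{sg}^{gr}(R^{G})$ of the semi-orthogonal decomposition of Lemma~\ref{Orlovembed} yields $D_{sg}^{gr}(R^{G})=\mathrm{thick}(\bigoplus_{i=-n+1}^{0}\mu\circ\pi((1-e')\Lambda'e'(i)))$. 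Thus the proposition reduces to showing $\mu\circ\pi((1-e')\Lambda'e'(j))\in\mathrm{thick}(E'_{Q})$ for $-n+1\le j\le 0$.

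For that I would establish the analogues of Lemmas~\ref{claim1} and~\ref{lemma:Tocom}. Let ${P'}^{\ast}$ be the bounded graded projective $\Lambda'$-resolution of $(1-e')$; since $\Lambda'$ is AS-regular of dimension $n-1$ with Gorenstein parameter $n$ it has $n$ terms, with ${P'}^{1-n}=(1-e')\Lambda'(-n)$, ${P'}^{0}=(1-e')\Lambda'$, and each ${P'}^{-i}$ a direct sum of summands of $\Lambda'(-i-1)$ and $\Lambda'(-i)$, its differentials strictly increasing the internal degree along matching indecomposable summands because the ${P'}^{-i}$ are built from minimal right $\mathrm{add}(\Lambda')$-approximations. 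Applying $(-)e'$ and $\pi$ to the exact sequences extracted from the shifts ${P'}^{\ast}(i)$ and running the same downward induction on $j$ as in Lemma~\ref{claim1} gives $\pi((1-e')\Lambda'e'(j))\in\mathrm{thick}(\pi(\bigoplus_{i=1}^{n-1}(\Omega_{\Lambda'}^{i}(1-e')(i))e'\oplus(1-e')\Lambda'e'),\,\pi(\bigoplus_{r=-n+1}^{0}e'\Lambda'e'(r)))$; applying $\mu$ annihilates the second block, since $\mu\circ\pi(e'\Lambda'e'(r))=0$ for $-n+1\le r\le 0$. Independently, Amiot's Lemma~\ref{Cancommu} gives $\Phi(E'_{Q})\cong\pi(L_{e'\Lambda'e'}(\bigoplus_{i=1}^{n-1}(\Omega_{\Lambda'}^{i}(1-e')(i))e'\oplus(1-e')\Lambda'e'))$, its two hypotheses being checked exactly as in Claims~\ref{claim22} and~\ref{claim33}: the positivity $\Omega_{\Lambda'}^{i}(1-e')(i)\in\mathrm{grmod}(\Lambda')_{\ge 0}$ follows from the shape of ${P'}^{\ast}$, and the vanishing $\mathrm{Hom}^{\ast}_{D^{b}(\mathrm{grmod}\,R^{G})}(L_{e'\Lambda'e'}(-),e'\Lambda'e'(j))=0$ for $j\le 0$ reduces to the same finite case analysis. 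Feeding this into $\mu\circ\Phi=\mathrm{id}_{D_{sg}^{gr}(R^{G})}$ identifies $\mathrm{thick}(\mu\circ\pi(\bigoplus_{i}(\Omega_{\Lambda'}^{i}(1-e')(i))e'\oplus(1-e')\Lambda'e'))$ with $\mathrm{thick}(E'_{Q})$, which is the required membership.

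The one step that is not purely formal, and the one I would write out, is the analogue of Case~5 of Claim~\ref{claim33}: the vanishing of $\mathrm{Hom}^{0}_{D^{b}(\mathrm{grmod}\,R^{G})}(\Omega_{\Lambda'}^{i}(1-e')(i)e',\,e'\Lambda'e'(-1))$ for $2\le i\le n-1$. As before, a nonzero such morphism lifts along ${P'}^{-i}e'(i)\twoheadrightarrow\Omega_{\Lambda'}^{i}(1-e')(i)e'$, and since ${P'}^{-i}e'(i)$ is a sum of summands of $\Lambda'e'(-1)$ and $\Lambda'e'$ one isolates the summand $e'\Lambda'(-1)$; passing between $\mathrm{grmod}\,R^{G}$, $\mathrm{tail}\,R^{G}$, $\mathrm{tail}\,\Lambda'$ and $\mathrm{grmod}\,\Lambda'$ via Lemmas~\ref{Sing2} and~\ref{Depth2} turns the relevant composite into a nonzero scalar endomorphism of $e'\Lambda'(-1)$, forcing $e'\Lambda'(-1)$ to split off $\Omega_{\Lambda'}^{i}(1-e')(i)$ and contradicting both the minimality of ${P'}^{\ast}$ and the vanishing $\mathrm{Hom}^{\bullet}_{D^{b}(\mathrm{grmod}\,\Lambda')}({P'}^{\ast},e'\Lambda'(\ast))=0$. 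The finite abelian group $H=G/T$ intervenes only in the description of the morphisms between the $R^{G}$-modules $M^{G}_{R}(U_{\chi})$: these are still governed by monomials in $x_{1},\dots,x_{n}$ of prescribed $T$-weight, the extra $H$-grading merely refining and never obstructing the monomial-support bookkeeping, so the effectiveness of $\chi$ (condition~(3) of Definition~\ref{Defchi}, which puts the offending variable outside the ideal generated by the remaining ones) enters at exactly the same place. Once this is settled the rest transcribes verbatim, and the proposition follows.
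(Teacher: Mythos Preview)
Your proposal is correct and follows exactly the paper's intended approach: the paper explicitly states that the proofs of Propositions~\ref{fully2} and~\ref{ExtVani2} are ``completely analogous'' to those of Propositions~\ref{fully} and~\ref{ExtVani} and omits them, and your transcription carries out this transcription in full detail with the correct substitutions $\Lambda\rightsquigarrow\Lambda'$, $e\rightsquigarrow e'$, $R^{T}\rightsquigarrow R^{G}$ and the corresponding auxiliary results. One small remark: the monomial/effectiveness bookkeeping you invoke in your final sentence (the observation that a certain variable lies outside the ideal generated by the others) is actually the mechanism used in Case~3 of the proof of Proposition~\ref{ExtVani}, not in Case~5 of Claim~\ref{claim33}; the latter is the pure splitting-contradiction argument you describe just before, which already suffices for Proposition~\ref{fully2}, so your closing remark is superfluous here (though it is the right thing to say for Proposition~\ref{ExtVani2}).
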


\begin{proposition}[Compare with Proposition \ref{ExtVani}]\label{ExtVani2}
Under the assumptions of Theorem \ref{MainTh}, in the second case, we have
$
\mathrm{Hom}^{r}_{D_{sg}^{gr}(R^{G})}(E'_{Q}, E'_{Q}) = 0
$
for any $r \neq 0$.
\end{proposition}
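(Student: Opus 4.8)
The plan is to transcribe the proof of Proposition~\ref{ExtVani} almost verbatim, replacing the triple $(R^{T},\Lambda,e)$ throughout by $(R^{G},\Lambda',e')$ and invoking Propositions~\ref{ToGo2} and~\ref{ToAS2} and Lemma~\ref{Sing2} in place of Propositions~\ref{ToGo} and~\ref{ToAS} and Lemma~\ref{Sing}. The first step is to record the $G$-analogue of Lemma~\ref{lemma:Tocom}. Since $R^{G}$ is a Noetherian graded Gorenstein algebra of dimension $n-1$ with Gorenstein parameter $n$ (Proposition~\ref{ToGo2}) and $\Lambda'$ is AS-regular of the same dimension and Gorenstein parameter (Proposition~\ref{ToAS2}), the complex
$$
L_{e'\Lambda' e'}\Big(\bigoplus_{i=1}^{n-1}\big(\Omega^{i}_{\Lambda'}(1-e')(i)\big)e'\oplus (1-e')\Lambda' e'\Big)
$$
satisfies the two hypotheses of Amiot's Lemma~\ref{Cancommu} taken with $A=R^{G}$; the verification is word for word that of Claims~\ref{claim22} and~\ref{claim33}, using only that the minimal graded projective $\Lambda'$-resolution $P^{\ast}$ of $1-e'$ has length $n-1$, that $P^{1-n}$ is a sum of summands of $\Lambda'(-n)$, and that each $P^{-i}$ is a sum of summands of $\Lambda'(-i-1)$ and $\Lambda'(-i)$ (all from Proposition~\ref{ToAS2}). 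Hence $\Phi(E'_{Q})\cong\pi\big(L_{e'\Lambda' e'}(\cdots)\big)$.

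Combining the equivalence $(-)e'\colon\mathrm{tail}\,\Lambda'\to\mathrm{tail}\,R^{G}$ of Lemma~\ref{Sing2}, its lift to $D^{b}(\mathrm{tail}\,-)$ via Lemma~\ref{Torsion}, the full faithfulness of $\Phi$, and the analogue of Lemma~\ref{lemma:Tocom} just recorded, one obtains
$$
\mathrm{Hom}^{r}_{D^{gr}_{sg}(R^{G})}(E'_{Q},E'_{Q})\;\cong\;\mathrm{End}^{r}_{D^{b}(\mathrm{grmod}\,\Lambda')}\Big(\bigoplus_{i=2}^{n-1}\widetilde{P^{\ast}_{-i}(i)}\oplus\bigoplus_{i=0}^{1}(1-e')\Lambda'(i)\Big),
$$
where $P^{\ast}_{-i}$ is the brutal truncation of $P^{\ast}$ computing $\Omega^{i}_{\Lambda'}(1-e')$ and $\widetilde{P^{\ast}_{-i}}$ is obtained from it by replacing the tail by a minimal left $\mathrm{add}(e'\Lambda')$-approximation, exactly as in Case~1 of the proof of Proposition~\ref{ExtVani}; these approximations exist because $\Lambda'$ is graded locally finite-dimensional with semisimple $\Lambda'_{0}$. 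Note that Lemma~\ref{Depth2} (Mori's depth-$2$ lemma) still applies to $\Lambda'$ and lets one pass freely between $\mathrm{Hom}$ in $\mathrm{tail}\,\Lambda'$ and in $\mathrm{grmod}\,\Lambda'$ throughout the argument.

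The three cases are then dispatched exactly as in Proposition~\ref{ExtVani}. For $r\le-2$, the complex on the right-hand side has homology concentrated in degrees $0$ and $1$, so the group vanishes. For $r\ge 1$, Lemma~\ref{VaniExt} applied to the AS-regular algebra $\Lambda'$ of dimension $n-1$ with Gorenstein parameter $n$, together with the defining property of the minimal right $\mathrm{add}(\Lambda')$-approximations appearing in $P^{\ast}$, forces $\mathrm{Hom}^{r}_{D^{b}(\mathrm{grmod}\,\Lambda')}(1-e',\Lambda'(j))=0$ for $r\ne n-1$ and hence the desired vanishing, verbatim as in Case~2. The main case is $r=-1$: reducing as in Case~3 to computing $\mathrm{Hom}^{-1}_{D^{b}(\mathrm{grmod}\,\Lambda')}(\widetilde{P^{\ast}_{-i}(i)},\widetilde{P^{\ast}_{-j}(j)})$ with explicit chain complexes, one picks an indecomposable summand $\varepsilon_{1}\Lambda'(-1)$ of $P^{-i}(i)$ and an indecomposable quotient $\varepsilon_{2}\Lambda'(1)$ of $P^{-j+1}(j)$, and a nonzero chain-map component yields morphisms $\varepsilon_{1}\Lambda'(-1)\to(e'\Lambda')^{\oplus m_{i}}\to\varepsilon_{2}\Lambda'(1)$ with zero composite. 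Via Lemma~\ref{Equi} (for $G$) and the isomorphism $\Lambda'\cong kH\otimes\Lambda$, on underlying $R$-modules every morphism $e'\Lambda'\to\varepsilon_{2}\Lambda'(1)$ is a $k$-linear combination of the variables $\{x_{j_{1}},\dots,x_{j_{s}}\}$ of negative $T$-weight and every morphism $\varepsilon_{1}\Lambda'(-1)\to e'\Lambda'$ one of the variables $\{x_{i_{1}},\dots,x_{i_{t}}\}$ of positive $T$-weight; the composite being zero then gives an identity $\sum_{r}x_{j_{r}}f_{j_{r}}=0$ in $R$ with each $f_{j_{r}}\in(x_{i_{1}},\dots,x_{i_{t}})$, which is impossible since $x_{j_{1}}\notin(x_{j_{2}},\dots,x_{j_{s}},x_{i_{1}},\dots,x_{i_{t}})$ in $R$. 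The same computation handles $\mathrm{Hom}^{-1}$ with one factor equal to $(1-e')\Lambda'(i)$, completing the argument.

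The one point needing genuine checking, the rest being a mechanical transcription, is the $r=-1$ case: one must confirm that passing from $T$ to $G$ changes neither the polynomial ring $R=k[x_{1},\dots,x_{n}]$ nor the partition of its variables by the sign of their $T$-weight, so that Lemma~\ref{Equi} and the resulting description of morphisms between the summands of $\Lambda'\cong kH\otimes\Lambda$ still produce the crucial ideal-membership contradiction. The finite abelian group $H$ only refines which monomials occur in each $\mathrm{Hom}$-space and never mixes positive- with negative-$T$-weight variables, so the obstruction argument goes through unchanged.
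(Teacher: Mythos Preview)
Your proposal is correct and is precisely the approach the paper takes: the authors state that the proof of Proposition~\ref{ExtVani2} is completely analogous to that of Proposition~\ref{ExtVani} and omit it, and you have faithfully carried out that transcription, correctly identifying the $r=-1$ case as the only place requiring a genuine check and verifying that the isomorphism $\Lambda'\cong kH\otimes\Lambda$ preserves the positive/negative $T$-weight dichotomy on the variables needed for the ideal-membership contradiction.
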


\begin{proof}[Proof of Theorem \ref{MainTh} continued
(when $G$ is the product of $k^\times$
with a finite group)]
The proof follows from the combination of
Propositions \ref{fully2} and \ref{ExtVani2}.
\end{proof}

\begin{remark}\label{rem:highdim}
The above method of constructing the tilting object
does not work for the case when $G$ contains a two
or higher dimensional torus.

In fact, our method uses \cite[Lemma 1.19]{SV}, which assumes that
the action of the connected subgroup $G_{0}$ of $G$ on $R$ is quasi-symmetric.
Now suppose $G$ contains an $m$-dimensional torus $G_0$, where $m\ge 2$,
and $R^{G_{0}}$ is an isolated singularity.
Assume the weights of the $G_0$-action on $R$ are
$v_{1}, v_{2}, \cdots, v_{n}$, each of which is in $\mathbb{Z}^{m}$.
Similarly to Theorem \ref{Sing}, the isolatedness of the singularity of $R^{G_{0}}$ implies that,
if there were a sequence of natural numbers $c_{1}, c_{2}, \cdots, c_{r}$ and a subset
$\{u_{1}, u_{2}, \cdots, u_{r}\} \subseteq\{v_{1}, v_{2}, \cdots, v_{n}\}\subset X(T)$ such that
$$
\sum_{i}c_{i}u_{i} = 0,
$$
then $\{u_{1}, u_{2}, \cdots, u_{r}\}$ spans $\mathbb{Z}^{m}$ as a $\mathbb{Z}$-module.

Since the action of $G_{0}$ on $R$ is quasi-symmetric, there is a line
$\ell \subset \mathbb{R}^{m}$
through the origin such that
$$
\{\beta_{1}, \beta_{2}, \cdots, \beta_{t}\} :=\{v_{1}, v_{2}, \cdots, v_{n}\} \bigcap \ell \neq  \emptyset
$$
and $\sum_{\beta_{i} \in \ell} \beta_{i} = 0$. Thus, $\{\beta_{1}, \beta_{2}, \cdots, \beta_{t}\}$
spans $\mathbb{Z}^{m}$ as a $\mathbb{Z}$-module. But this is impossible since
$\{\beta_{1}, \beta_{2}, \cdots, \beta_{t}\}$ is contained in a line which goes through the origin.
\end{remark}

\section{An example}\label{sect:example}

In this section, we study an example of a singularity category,
where the tilting object
in its singularity category is explicitly constructed.

Let $R = k[x_{1}, x_{2}, x_{3}, x_{4}]$. Assume 
$T=k^\times$ acts on $R$ with weights $(1, 1, -1, -1)$.
By Proposition \ref{SVT},  $\mathcal{L} = \{0, 1\}$,
and then $M = R^{T} \oplus M^{T}_{R}(V_{1})$ and $\Lambda = \mathrm{End}_{R^{T}}(M)$,
where $V_{1}$ is the one dimensional representation of $T$ with weight $1$.
Moreover, $\Lambda$ can be written as a quiver algebra $kQ_{\Lambda}/ I$, where
$Q_{\Lambda}$ is given as follows:
\begin{displaymath}
\xymatrix{
\bullet_{1} \ar@/^0.3cm/[rr]^{\bar{x}_{3}} \ar@/^0.7cm/[rr]^{\bar{x}_{4}}
&& \bullet_{2} \ar@/^0.3cm/[ll]_{\bar{x}_{1}} \ar@/^0.7cm/[ll]_{\bar{x}_{2}}\\
}
\end{displaymath}
where
vertex 1 corresponds to the summand $R^{T}$ of $M$,
vertex 2 corresponds to the summand $M^{T}_{R}(V_{1})$ of $M$,
the arrows $\bar{x}_{3}$, $\bar{x}_{4}$ correspond to the morphisms
$$
x_{3}, x_{4} \in \mathrm{Hom}_{\mathrm{mod} \,R^{T}}\big(R^{T}, M^{T}_{R}(V_{1})\big)
\cong \mathrm{Hom}_{\mathrm{mod}(k, T)}(V_{0}, R \otimes V_{1}) \subseteq R
$$
respectively, the arrows $\bar{x}_{1}$, $\bar{x}_{2}$ correspond to the morphisms
$$
x_{1}, x_{2} \in  \mathrm{Hom}_{\mathrm{mod} \,R^{T}}\big(M^{T}_{R}(V_{1}), R^{T}\big)
\cong \mathrm{Hom}_{\mathrm{mod}(k, T)}(V_{1}, R \otimes V_{0}) \subseteq R
$$
respectively, and $I$ is generated by
$$\bar{x}_{1}\bar{x}_{3}\bar{x}_{2}- \bar{x}_{2}\bar{x}_{3}\bar{x}_{1},
\quad\bar{x}_{1}\bar{x}_{4}\bar{x}_{2} - \bar{x}_{2}\bar{x}_{4}\bar{x}_{1},
\quad\bar{x}_{3}\bar{x}_{1}\bar{x}_{4}-\bar{x}_{4}\bar{x}_{1}\bar{x}_{3},
\quad
\bar{x}_{3}\bar{x}_{2}\bar{x}_{4}- \bar{x}_{4}\bar{x}_{2}\bar{x}_{3}.
$$

Now, we have a projective resolution of $\Lambda_{0} \cong k^{2}$ as
a graded $\Lambda$-module as follows:
\begin{equation}\label{eq:resln}
\xymatrix{
0 \ar[r] & \Lambda(-4) \ar[r]^-{\varphi_{3}}
& \Lambda^{\oplus 4}(-3) \ar[r]^-{\varphi_{2}} &
\Lambda^{\oplus 4}(-1) \ar[r]^-{\varphi_{1}}
&  \Lambda \ar[r]^-{\varphi_{0}} & \Lambda_{0} \ar[r] & 0 }
\end{equation}
where:
\begin{enumerate}
\item[(1)] $\varphi_{0}$ is given by the canonical projection
$\Lambda = \Lambda \twoheadrightarrow \Lambda_{0}$;
\item[(2)] $\varphi_{1}$ is given by
 $$
 \varphi_{1}(a, b, c, d) = (\bar{x}_{1}a + \bar{x}_{2}b + \bar{x}_{3}c + \bar{x}_{4}d),
 $$
for any $(a, b, c, d) \in  \Lambda^{\oplus 4}(-1)$;
\item[(3)] $\varphi_{2}$ is given by
 $$
 \varphi_{1}(a, b, c, d) = (\bar{x}_{4}\bar{x}_{2}a - \bar{x}_{3}\bar{x}_{2}b,  -\bar{x}_{4}\bar{x}_{1}a
 + \bar{x}_{3}\bar{x}_{1}b, \bar{x}_{1}\bar{x}_{4}c - \bar{x}_{2}\bar{x}_{4}d, -\bar{x}_{1}\bar{x}_{3}c
 + \bar{x}_{2}\bar{x}_{3}d),
 $$
for any $(a, b, c, d) \in  \Lambda^{\oplus 4}(-3)$;
\item[(4)] $\varphi_{3}$ is given by
 $$
 \varphi_{3}\big(1_{\Lambda}(-4)\big)
 = \big(\bar{x}_{3}(-4), \bar{x}_{4}(-4), \bar{x}_{2}(-4), \bar{x}_{1}(-4)\big)
 \in \Lambda^{\oplus 4}(-3),
 $$
and $1_{\Lambda}$ is the identity element of $\Lambda$.
\end{enumerate}

Moreover, for the submodule $(1-e) \Lambda_{0} \cong k$ of $\Lambda_{0}$,
we have a projective resolution of $ (1-e) \Lambda_{0}$ as
a graded $\Lambda$-module as follows:
\begin{equation*}
\begin{split}
\xymatrixrowsep{1.5pc}
\xymatrix{
0 \ar[r] & (1-e) \Lambda(-4) \ar[r]^-{\varphi_{3}\mid_{(1-e) \Lambda}}
& e \Lambda^{\oplus 2}(-3) \ar[d]^-{\varphi_{2}\mid_{e \Lambda^{\oplus 2} }}&&&\\
&&
e \Lambda^{\oplus 2}(-1) \ar[r]^-{\varphi_{1}\mid_{e \Lambda^{\oplus 2} }}
&  (1-e) \Lambda \ar[r]^-{\varphi_{0}\mid_{(1-e) \Lambda}} & (1-e) \Lambda_{0} \ar[r] & 0,
}
\end{split}
\end{equation*}
which is a subcomplex of above resolution \eqref{eq:resln}
of $\Lambda_{0}$,  where
the idempotent $e$ corresponds to the vertex $1$, the idempotent
$1-e$ corresponds to the vertex $2$, and
\begin{enumerate}
\item[(1)] $\varphi_{0}|_{(1-e) \Lambda}$ is given by the canonical projection
$\Lambda|_{(1-e) \Lambda} = (1-e)\Lambda \twoheadrightarrow (1-e)\Lambda_{0}$;

\item[(2)] $\varphi_{1}|_{e \Lambda^{\oplus 2}}$ is given by
 $$
 \varphi_{1}|_{e \Lambda^{\oplus 2}}(c, d) = ( \bar{x}_{3}c + \bar{x}_{4}d),
 $$
for any $(c, d) \in  \Lambda^{\oplus 2}(-1)$;

\item[(3)] $\varphi_{2}|_{e \Lambda^{\oplus 2}}$ is given by
 $$
 \varphi_{2}|_{e \Lambda^{\oplus 2}}(c, d)
 = (\bar{x}_{1}\bar{x}_{4}c - \bar{x}_{2}\bar{x}_{4}d, -\bar{x}_{1}\bar{x}_{3}c
 + \bar{x}_{2}\bar{x}_{3}d),
 $$
for any $(c, d) \in  \Lambda^{\oplus 2}(-3)$;

\item[(4)] $\varphi_{3}|_{(1-e) \Lambda}$ is given by
 $$
 \varphi_{3}|_{(1-e) \Lambda} \big(1_{\Lambda}(-4)\big)
 = \big(\bar{x}_{2}(-4), \bar{x}_{1}(-4)\big)
 \in \Lambda^{\oplus 2}(-3),
 $$
and $1_{\Lambda}$ is the identity element of $\Lambda$.
\end{enumerate}
By a slight abuse of notation, we again use $\varphi_{0}$, $\varphi_{1}$,
$\varphi_{2}$ and $\varphi_{3}$
to denote $\varphi_{0}|_{(1-e) \Lambda}$, $\varphi_{1}|_{e \Lambda}$,
$\varphi_{2}|_{e \Lambda}$ and
$\varphi_{3}|_{(1-e) \Lambda}$ respectively.

From the above projective resolutions,
we obtain the tilting object $E_{Q}$ of $D_{sg}^{gr}(R^{T})$,
which is the direct
sum of the following objects:
\begin{enumerate}
\item[$(1)$] $\nu\big((1-e)\Lambda e \big)$;
\item[$(2)$] $\nu\big( (1-e)\Lambda e (1)\big)$;
\item[$(3)$]  $\nu\big(
(e\Lambda e)^{\oplus 2}(1) \xrightarrow{\varphi_{1} \otimes _{\Lambda} \Lambda e}
 (1-e) \Lambda e(2)\big)$;
\item[$(4)$]   $\nu\big((1-e)\Lambda e(-1) \xrightarrow{\varphi_{3} \otimes _{\Lambda} \Lambda e}
e \Lambda e^{\oplus 2}\big)$.
\end{enumerate}
In the above four objects, (1) is obvious.
Object (2) is given as follows:
observe that the morphism space
$$
\mathrm{Hom}_{D^{b}(\mathrm{grmod}\, R^{T})}
\Big(\big(\Omega_{\Lambda}^{1}\big((1-e)\Lambda_0\big)(1) \big)e, e \Lambda e\Big)
$$
vanishes; from the definition of the cone of the left minimal 
$\mathrm{add}(e \Lambda e)$-approximation, we then have
$L_{e \Lambda e}\left( \big(\Omega_{\Lambda}^{1}\big((1-e)\Lambda_0\big)(1)
\big)  e\right)= \big(\Omega_{\Lambda}^{1}\big((1-e)\Lambda_0\big)
(1)\big)e.$
Thus, we get
$$
L_{e \Lambda e} \left(\big(\Omega_{\Lambda}^{1}\big((1-e)\Lambda_0\big)
(1) \big)e\right)
= (1-e)\mathrm{ker}(\varphi_{0})e(1)
=(1-e) \Lambda e (1),
$$
where the first equality holds since we have
$$
\big(\Omega_{\Lambda}^{1}\big((1-e)\Lambda_0\big)\big)
 e = (1-e)\mathrm{ker}(\varphi_{0})e
$$
in $D^{b}(\mathrm{tails} R^{T})$.

Object (3) comes as follows:
observe to that the morphism space
$$
\mathrm{Hom}_{D^{b}(\mathrm{grmod}\, R^{T})}
\Big(
\big(\Omega_{\Lambda}^{2}\big((1-e)\Lambda_0\big)(2)\big) e, e \Lambda e\Big)
$$
vanishes, and thus $$L_{e \Lambda e}
\left(\big(\Omega_{\Lambda}^{2}\big((1-e)\Lambda_0\big)(2) \big)e\right)
= \big(\Omega_{\Lambda}^{2}\big((1-e)\Lambda_0\big)(2)\big) e.$$
Therefore $ L_{e \Lambda e} \left(\big((\Omega_{\Lambda}^{2}
\big((1-e)\Lambda_0\big)(2)\big) e\right)$
is isomorphic to the following complex
\begin{equation*}
 (e\Lambda e)^{\oplus 2}(1) \xrightarrow{\varphi_{1} \otimes _{\Lambda} \Lambda e}
 (1-e) \Lambda e(2),
\end{equation*}
by the definition of syzygy, where $(e\Lambda e)^{\oplus 2}(1)$ is in the degree zero 
and $(1-e) \Lambda e(2)$
is in the degree one of this complex.

Object (4) is due to the following:
from the above
resolution \eqref{eq:resln} we have $\Omega_{\Lambda}^{3}\big((1-e)\Lambda_0\big)
 \cong (1 - e) \Lambda(-4)$,
and then
$\big(\Omega_{\Lambda}^{3}\big((1-e)\Lambda_0\big)
(3)\big) e \cong (1-e)\Lambda e(-1)$.
Thus we get
$$
\mathrm{Hom}_{D^{b}(\mathrm{grmod}\, R^{T})}
\Big(\big(\Omega_{\Lambda}^{3}\big((1-e)\Lambda_0\big)
(3)  \big)e, e \Lambda e\Big) \cong
\mathrm{Hom}_{D^{b}(\mathrm{grmod}\, R^{T})}\big((1-e)\Lambda e(-1),
e \Lambda e\big) \cong k^{\oplus 2}.
$$
Suppose $f_1, f_2 \in \mathrm{Hom}_{D^{b}(\mathrm{grmod}\,
R^{T})}\Big(\big(\Omega_{\Lambda}^{3}\big((1-e)\Lambda_0\big)
(3) \big)e, e \Lambda e\Big)$
are the bases of the above vector space. Then the morphism
$$
f_{1} \oplus f_{2} :\big(\Omega_{\Lambda}^{3}\big((1-e)\Lambda_0\big)
(3) \big)e
\rightarrow (e \Lambda e)^{\oplus 2}
$$
gives the minimal left $\mathrm{add}(e \Lambda e)$-approximation of
$\big(\Omega_{\Lambda}^{3}\big((1-e)\Lambda_0\big)(3)\big)
 e$.
Moreover, with the identification of 
$\big(\Omega_{\Lambda}^{3}\big((1-e)\Lambda_0\big)
(3)\big) e$
with $(1-e)\Lambda e(-1)$, we see that $f_1 \oplus f_2$
is identified with $\varphi_{3} e$.
Thus $L_{e \Lambda e}\left(\big(\Omega_{\Lambda}^{3}\big((1-e)\Lambda_0\big)
(3)\big) e\right)$ is equal to the following complex
\begin{equation*}
 (1-e)\Lambda e(-1) \xrightarrow{\varphi_{3} \otimes _{\Lambda} \Lambda e}
 e \Lambda e^{\oplus 2},
\end{equation*}
where $(1-e)\Lambda e(-1)$ is in the degree zero and $e \Lambda e^{\oplus 2}$
is in the degree one of this complex.

In fact, we can explicitly compute $\mathrm{Hom}_{D_{sg}^{gr}(R^T)}(E_Q, E_Q)
=\mathrm{End}_{D_{sg}^{gr}(R^T)}(E_Q)$.
First of all, we have the following equalities
\begin{align*}
& \mathrm{End}_{D_{sg}^{gr}(R^T)} \left( \nu\left( \bigoplus^{3}_{i = 1}
\big(\Omega_{\Lambda}^{i}\big((1-e)\Lambda_0\big)(i)
\big) e\oplus ( 1-e)\Lambda e\right)\right) \\
&= \mathrm{End}_{D_{sg}^{gr}(R^T)} \left( \nu\left(\bigoplus^{3}_{i = 1}
L_{e \Lambda e}\left(\big(\Omega_{\Lambda}^{i}\big((1-e)\Lambda_0\big)(i) \big)e
\right) \oplus ( 1-e)\Lambda e\right)\right)\\
& = \mathrm{End}_{D_{sg}^{gr}(R^T)} \left( \Phi \circ \nu\left(\bigoplus^{3}_{i = 1}
L_{e \Lambda e} \left(\big(\Omega_{\Lambda}^{i}\big((1-e)\Lambda_0\big)(i)\big) e
\right) \oplus ( 1-e)\Lambda e\right)\right) \\
& = \mathrm{End}_{D^{b}(\mathrm{tails} \,R^T)} \left( \pi\left(\bigoplus^{3}_{i = 1}
L_{e \Lambda e} \left(\big(\Omega_{\Lambda}^{i}\big((1-e)\Lambda_0\big)(i) \big)e
\right) \oplus ( 1-e)\Lambda e\right)\right),
\end{align*}
where the first equality holds by the definition of $\nu$,
the second equality holds by Lemma \ref{Orlovembed},
and the third equality holds by
Claims \ref{claim22} and \ref{claim33} and hence
Lemma \ref{Cancommu}.

In the above equalities, we should note that
$\bigoplus^{3}_{i = 1} \big(\Omega_{\Lambda}^{i}
\big((1-e)\Lambda_0\big)(i) \big)e
\oplus ( 1-e)\Lambda e$
does not satisfy the conditions of Lemma \ref{Cancommu}, and therefore
for this object
there is no equality which is
similar with the above third equality.
Thus, to compute the above morphism spaces in
$D^{b}(\mathrm{tails} \,R^T)$, we have to transfer the computations on
$$\nu\left(\Big(\bigoplus^{3}_{i = 1} \big(\Omega_{\Lambda}^{i}\big((1-e)\Lambda_0\big)
(i) \big)e\Big)
\oplus ( 1-e)\Lambda e\right)$$ to the ones on
$$\nu\left(\Big( \bigoplus^{3}_{i = 1} L_{e \Lambda e}
\left(\big(\Omega_{\Lambda}^{i}\big((1-e)\Lambda_0\big)(i) \big)e
\right)\Big)  \oplus ( 1-e)\Lambda e\right).$$

Now, we are able to compute the morphism spaces in $D^{b}(\mathrm{tails} \,R^T)$
between the direct summands of
$\pi \left(\Big( \bigoplus^{3}_{i = 1} L_{e \Lambda e}
\left(\left(\Omega_{\Lambda}^{i}\big((1-e)\Lambda_0\big)(i) \right)e\right)
\Big) \Big)\oplus ( 1-e)\Lambda e \right)$.
The computations are quite straightforward.

Taking, for example, the summand $\pi\big((1-e)\Lambda e\big)$,
we have the following equalities.
First, \begin{align*}
& \mathrm{Hom}_{D^{b}(\mathrm{tails} \,R^T)} \left( \pi\big((1-e)\Lambda e\big),  
\pi\big((1-e)\Lambda e\big)\right) \\
& = \mathrm{Hom}_{D^{b}(\mathrm{tails} \,\Lambda )} \left( \pi\big((1-e)\Lambda\big),  
\pi\big((1-e)\Lambda\big) \right) \quad (\textup{by Lemma \ref{Torsion}})\\
&= \big((1-e) \Lambda (1-e)\big)_0 \quad (\textup{by Lemma \ref{VaniExt}}) \\
& = (R^T)_0 \\
&= k.
\end{align*}
Second,
\begin{align*}
& \mathrm{Hom}_{D^{b}(\mathrm{tails} \,R^T)} \left( \pi\big((1-e)\Lambda e\big),  
\pi\Big(L_{e \Lambda e}\big(\big(\Omega_{\Lambda}^{1}\big((1-e)\Lambda_0\big)
(1)\big) e\big)\Big)\right)\\
& = \mathrm{Hom}_{D^{b}(\mathrm{tails} \,R^T)} \big( \pi\big((1-e)\Lambda e\big),  
\pi\big((1-e)\Lambda e (1)\big) \big)\\
&= \mathrm{Hom}_{D^{b}(\mathrm{tails} \,\Lambda)}
\big( \pi\big((1-e)\Lambda\big),  
\pi\big((1-e)\Lambda (1)\big) \big) \quad (\textup{by Lemma \ref{Torsion}})\\
& = (R^T)_{1} = 0.
\end{align*}
With the same method, we have
 \begin{align*}
 & \mathrm{Hom}_{D^{b}(\mathrm{tails} \,R^T)} 
 \left( 
 \pi\big((1-e)\Lambda e\big), \pi\Big
 ( L_{e \Lambda e} \big(\big(\Omega_{\Lambda}^{2}\big((1-e)\Lambda_0\big)
 (2)\big)
 e\big)\Big) \right)
 = 0
 \end{align*}
and
\begin{align*}
 & \mathrm{Hom}_{D^{b}(\mathrm{tails} \,R^T)}
 \left( \pi\big((1 - e) \Lambda e\big) ,  \pi\Big(L_{e \Lambda e} 
 \big(\big(\Omega_{\Lambda}^{3}\big((1-e)\Lambda_0\big)
 (3)\big) e\big)\Big) \right)
= 0.
\end{align*}
For the rest summands,
the computations are similar and are left to the interested readers.
In summary, only the endomorphisms of these summands are isomorphic
to $k$, and all the rest homomorphisms vanish.
We thus have
$$\mathrm{End}_{D_{sg}^{gr}(R^{T})}(E_{Q})=
\Big\{\mathrm{diag}(k_1,k_2,k_3,k_4)\Big|k_i\in k, i=1,\cdots, 4\Big\} \cong
k^{\oplus 4}.
$$

\begin{remark}
In this paper, we have assumed that the action of $T$ on $V$ is {\it generic}, which
then implies that $\mathrm{dim}(V)>3$, since the codimension of the set of unstable
points should be less than 2. Moreover, if $\mathrm{dim}(V)\le 3$,
then the weights $\chi$ of the $T$-action do not
contain at least two positive and two negative weights
(see Lemma \ref{equivalent} and Proposition \ref{prop:equivalencecond}).
However, in this case we may still consider the situation where the
$T$-action is {\it unimodular}; that is, $\sum_{i=1}^n\chi_i=0$. This is divided into
three cases:

\begin{enumerate}
\item[(1)] If $\mathrm{dim}(V) = 1$, then $R^{T} \cong k$. This is trivial.

\item[(2)] If $\mathrm{dim}(V) = 2$, then the weights $\chi = (\chi_1, \chi_2)$
has $\chi_2 =-\chi_1$.
This implies that $R^T \cong k[x_{1}x_{2}]\cong k[x] $ and then $\mathrm{Spec}(R^T)$ is smooth.

\item[(3)] If $\mathrm{dim}(V) = 3$, then there is at least one positive
and one negative component in $\chi = (\chi_1, \chi_2, \chi_3)$.
Without loss of generality, suppose that $\chi_1 > 0$ and $\chi_{2}, \chi_{3} < 0$.
Then it is straightforward to check that the ring $R^T$ is the $A_{\chi_1}$-singularity.
Moreover, $R^T$ is a graded Gorenstein ring of dimension 2 with Gorenstein parameter 3.
By the same argument as in \S\ref{sect:proofofmain1},
we have that $R^T$ satisfies the last three conditions in Theorem \ref{Sing},
and therefore obtain a tilting object of $D^{gr}_{sg}(R^T)$ by the same procedure.
\end{enumerate}
\end{remark}


\begin{thebibliography}{100}

\bibitem{CA}
Claire Amiot,
{\it Preprojective algebras, singularity categories and orthogonal decompositions},
 Algebras, quivers and representations, 1-11,
Abel Symp., 8, Springer, Heidelberg, 2013.

\bibitem{RB}
Ragnar-Olaf Buchweitz,
{\it Maximal Cohen-Macaulay modules and Tate cohomology},
with appendices by L.L. Avramov, B. Briggs, S.B. Iyengar and J.C. Letz.
Mathematical Surveys and Monographs Volume 262.
American Mathematical Society, 2021.

\bibitem{BIY}
Ragnar-Olaf Buchweitz, Osamu Iyama and Kota Yamaura,
{\it Tilting theory for Gorenstein rings in dimension one},
Forum Math. Sigma 8 (2020), Paper No. 37 pp.

\bibitem{DW}
Will Donovan and Michael Wemyss,
{\it Noncommutative deformations and flops},
Duke Math. J. 165 (2016), no. 8, 1397-1474.

\bibitem{SE}
Samuel Eilenberg,
{\it Homological dimension and syzygies}, Ann. of Math. (2) 64 (1956), 328-336.


\bibitem{HHK}Lidia A. H\"ugel, Dieter Happel and Henning Krause (Eds.),
{\it Handbook of Tilting Theory}, London Math. Soc. Lect.
Note Series 332, Cambridge Univ. Press, Cambridge, 2007.

\bibitem{IR}
Osamu Iyama and Idun Reiten,
{\it Fomin-Zelevinsky mutation and tilting modules over Calabi-Yau algebras},
Amer. J. Math. 130 (2008), no. 4, 1087-1149.

\bibitem{IT}
Osamu Iyama and Ryo Takahashi,
{\it Tilting and cluster tilting for quotient singularities},
Math. Ann. 356 (2013), no. 3, 1065-1105.

\bibitem{KS}
Henning Krause and Manuel Saorin,
{\it On minimal approximations of modules},
Contemp. Math., 229, Amer. Math. Soc.,
Providence, RI, (1998), 227-236.

\bibitem{Lu}
Domingo Luna,
{\it Slices \'etales},
Bull. Soc. Math. France 33, 81-105 (1973).

\bibitem{M}
Izuru Mori,
{\it McKay type correspondence for AS-Gorenstein algebras},
J. Lond. Math. Soc. 88 (2013) 2071-2091.

\bibitem{MM}
Hiroyuki Minamoto and Izuru Mori,
{\it The structure of AS-Gorenstein algebras},
Adv. Math. 226 (2011), no. 5, 4061-4095.

\bibitem{MU}
Izuru Mori and Kenta Ueyama,
{\it Stable categories of graded maximal Cohen-Macaulay modules over noncommutative quotient singularities},
Adv. Math. 297 (2016), 54-92.

\bibitem{MU2}
Izuru Mori and Kenta Ueyama,
{\it Ample group action on AS-regular algebras and noncommutative graded isolated singularities},
Trans. Amer. Math. Soc. 368 (2016), no. 10, 7359-7383.

\bibitem{DR}
Dmitri Orlov,
{\it Triangulated categories of singularities and D-branes in Landau-Ginzburg models},
Tr. Mat. Inst. Steklova 246 (2004), Algebr. Geom. Metody, Svyazi i Prilozh.,
240-262; translation in
Proc. Steklov Inst. Math. 2004, no. 3 (246), 227-248.

\bibitem{DR1}
Dmitri Orlov,
{\it Triangulated categories of singularities, and equivalences between Landau-Ginzburg models},
(Russian) Mat. Sb. 197 (2006), no. 12, 117-132.

\bibitem{DR2}
Dmitri Orlov,
{\it Derived categories of coherent sheaves and triangulated categories of singularities}.
Algebra, arithmetic, and geometry: in honor of Yu. I. Manin. Vol. II,
503-531, Progr. Math., 270, Birkh\"{a}user Boston, Boston, MA, 2009.

\bibitem{SV}
\v{S}pela \v{S}penko and Michel Van den Bergh,
{\it Non-commutative resolutions of quotient singularities for reductive groups},
Invent. Math. 210 (2017), no. 1, 3-67.

\bibitem{SV1}
\v{S}pela \v{S}penko and Michel Van den Bergh,
{\it Non-commutative crepant resolutions for some toric singularities I},
Int. Math. Res. Not. IMRN 2020, no. 21, 8120-8138.

\bibitem{SV2}
\v{S}pela \v{S}penko and Michel Van den Bergh,
{\it Comparing the Kirwan and Non-commutative resolutions of quotient varieties},
arXiv:1912.01689v1.


\bibitem{V}
Michel Van den Bergh,
{\it Three-dimensional flops and noncommutative rings}.
Duke Math. J. 122 (2004), no. 3, 423-455.


\bibitem{V2}
Michel Van den Bergh,
{\it Non-commutative crepant resolutions}.
The legacy of Niels Henrik Abel, 749-770, Springer, Berlin, 2004.


\bibitem{W2}
Michael Wemyss,
{\it Noncommutative resolutions}. Noncommutative algebraic geometry,
Math. Sci. Res. Inst. Publ., 64, Cambridge Univ. Press, New York, 2016, 239-306.

\end{thebibliography}
\end{document}